\newcommand{\poly}{\mathrm{poly}}
\newcommand{\id}{\mathrm{id}}
\newcommand{\Hom}{\operatorname{Hom}}
\newtheorem{definition}{Definition}
\newtheorem{axiom}{Axiom}
\newtheorem{theorem}{Theorem}
\newtheorem{proposition}{Proposition}
\newtheorem{corollary}{Corollary}
\newtheorem{lemma}{Lemma}
\newtheorem{example}{Example}
\newtheorem{remark}{Remark}
\newcommand{\ostar}{\mathbin{\mathpalette\make@circled\star}}
\newcommand{\removelatexerror}{\let\@latex@error\@gobble}
\newcommand*{\rom}[1]{\expandafter\@slowromancap\romannumeral #1@}
\newcommand\latinabbrev[1]{
  \peek_meaning:NTF . {
    #1\@}%
  { \peek_catcode:NTF a {
      #1.\@ }%
    {#1.\@}}}
\titleclass{\subsubsubsection}{straight}[\subsubsection]
\begin{document}
\vspace{1cm}
\title{HilbMult: A Banach-Enriched Multicategory for Operator Algebras}
\vspace{1.8cm}
\author{Shih-Yu~Chang
\thanks{Shih-Yu Chang is with the Department of Applied Data Science,
San Jose State University, San Jose, CA, U. S. A. (e-mail: {\tt
shihyu.chang@sjsu.edu})
}}

\maketitle

\begin{abstract}
Category and multicategory theory provide abstract frameworks for describing structures and their compositions, with multicategories extending traditional categories to handle multi-input operations. These theories enable modular reasoning and coherent composition of complex systems, and have found applications in computer science, physics, and mathematics, including programming language semantics, quantum processes, tensor networks, operads, and higher algebra. Operator theory, in contrast, studies linear and multilinear transformations in functional spaces, forming the analytic backbone of modern analysis and quantum mechanics, with applications ranging from signal processing and control theory to data science. This paper explores the synergy between these two areas by showing how operator theory provides concrete analytic structures that naturally enrich multicategories, while multicategory theory supplies a unifying framework for organizing multi-input operators and ensuring coherence in complex networks. We develop a comprehensive categorical framework integrating operator theory with multicategories, introducing a symmetric monoidal multicategory of Hilbert spaces with bounded multilinear maps and establishing its enrichment and coherence properties. The framework includes a functorial spectral theorem, covariance under unitary transformations, and universality as a semantic target, providing a unified language for linking analytic structure, categorical semantics, and operator representations. This work lays the foundation for a broader research program uniting operator theory, category theory, and noncommutative geometry.
\end{abstract}
\begin{keywords}
Category Theory, Multicategory Theory, Operator Theory, Functorial Spectral Theorem, Categorical Spectral Architecture
\end{keywords}

\section{Introduction}\label{sec:Introduction}

Category and multicategory theory provide abstract frameworks for a unified synthetic description of structures and their compositions. Category theory focuses on morphisms between single-input objects while multicategory theory generalizes this to allow multi-input operations providing a natural language for algebraic, logical and arithmetic systems.  Seriously These theories are widely used in computer science (e.g. type theory, programming language semantics and compositional artificial intelligence) in physics , physics (e.g. quantum processes and tensor networks) and in mathematics (e.g.  operads, higher algebra, and homotopy theory), where they enable modular reasoning and the coherent composition of complex systems~\cite{awodey2010category,leinster2004higher}.

Operator theory, on the other hand, examines linear and multilinear operators in functional spaces, Hilbert spaces and Banach spaces, forming the analytical basis of modern functional analysis and quantum mechanics. Its applications span a wide range, including quantum physics, signal processing, control theory, and data science. Through spectral theory, operator algebra, and semigroup methods, it provides tools for solving differential equations, modeling dynamic systems, and analyzing large-dimensional data. Operator theory thus play a role to connect between pure and applied mathematics by connecting analysis, algebra and geometry in both theoretical and practical domains~\cite{abramovich2002invitation}.

Below, we consider the potential impacts of synergizing these two seemingly distant areas of mathematics:
\paragraph{Operator Theory $\Rightarrow$ Multicategory Theory.}
Operator theory provides analytic structure and concrete examples that naturally enrich multicategories. 
In the setting of Banach or Hilbert spaces, bounded multilinear operators 
$T: H_1 \times \cdots \times H_n \to H$ serve as canonical multimorphisms, turning the collection of Hilbert spaces into a multicategory. Operator-theoretic notions such as positivity, norms, and complete boundedness induce Banach-enriched multicategories, where hom-sets carry analytic structure. Furthermore, $C^*$-algebras can be viewed as algebras over these multicategories,  organizing their multiplication and $*$-operations coherently. Even functional calculus fits into this picture, as applying a polynomial or continuous function to several commuting operators defines a multimorphism consistent with the spectral theorem.  Thus, operator theory enriches multicategories with analytic semantics, grounding their composition in operator algebraic data.

\paragraph{Multicategory Theory $\Rightarrow$ Operator Theory.}
Conversely, multicategory theory provides a unifying framework for organizing multi-input operators and networks of compositions. It generalizes binary operator composition into a coherent system of multi-ary interactions, ideal for modeling tensor contractions, quantum channels, or PDE solution operators. Multicategories impose coherence and associativity on multilinear compositions, avoiding analytic ambiguities. They also enable homotopy transfer of operator structures across equivalent models, and extend the notion of duality or adjunction to multi-input contexts. In this direction, multicategories supply operator theory with syntactic clarity, compositional modularity, and higher-level coherence principles— 
transforming analytic constructions into structured, composable systems.

This paper establishes a comprehensive categorical framework that integrates operator theory with multicategory theory through Banach-enriched structures. We introduce $\mathbf{HilbMult}$, the symmetric monoidal multicategory of Hilbert spaces with bounded multilinear maps as multimorphisms, and prove its foundational enrichment and coherence properties. Within this framework, we formulate and prove the \emph{Functorial Spectral Theorem}, demonstrating that the continuous functional calculus extends naturally to a symmetric monoidal multifunctor. We further establish covariance under unitary transformations and a preliminary universality property, showing that $\mathbf{HilbMult}$ serves as a canonical target for multifunctorial representations of self-adjoint operators. Together, these results provide a categorical semantics for operator theory, linking analytic structure, functorial behavior, and spectral representation in a unified setting.

The remaining work of this paper can be structured as follows. Section~\ref{sec:Definitions and Axioms} introduces the basic definitions and axioms, which will be used $\mathbf{HilbMult}$ as a Banach-enriched symmetric monoidal multicategory. The enrichment properties and analytic consistency are studied in Section~\ref{sec:Banach Enrichment}. In Section~\ref{sec:Representability and Adjoints}, we develop the theory of representability and adjoints in the multicategorical context, connecting currying, duality, and Hilbert space geometry. The main Functorial Spectral Theorem that provides a categorical generalization of the spectral calculus for bounded self-adjoint operators is established in Section~\ref{sec:Functorial Spectral Theorem}. Next, in Section~\ref{sec:Covariance and Universality in the Multifunctorial Framework}, we build the covariance properties under unitary equivalence and formulate the universality of $\mathbf{HilbMult}$ as a semantic target for operator representations. In Section~\ref{sec:Examples}, we provide illustrative examples demonstrating the framework’s constructions. Finally, the concluding section outlines the broader \emph{Categorical Spectral Architecture} research program, situating the present results within a long-term vision that unites operator theory, category theory, and noncommutative geometry.

\begin{remark}
The author is solely responsible for the mathematical insights and theoretical directions proposed in this work. AI tools, including OpenAI's ChatGPT and DeepSeek models, were employed solely to assist in verifying ideas, organizing references, and ensuring internal consistency of exposition~\cite{chatgpt2025,deepseek2025}.
\end{remark}

\section{Definitions and Axioms}\label{sec:Definitions and Axioms}

Let $\mathbf{Hilb}$ denote the category of complex Hilbert spaces with bounded linear maps. We introduce $\mathbf{HilbMult}$ as a symmetric monoidal multicategory enriched over Banach spaces. We first present the fundamental definitions, followed by the core axioms. For completeness and self-contained clarity, this section material is also reproduced from our recent work~\cite{Chang2025CompositionCoherence}.

\subsection{Core Definitions (Briefly)}

\begin{definition}[Multicategory]
A multicategory consists of:
\begin{itemize}
    \item A collection of objects.
    \item For each finite list of objects $(X_1,\dots,X_n)$ and object $Y$, a set $\mathrm{Hom}(X_1,\dots,X_n;Y)$ of multimorphisms.
    \item Composition operations satisfying associativity.
    \item Identity morphisms for each object.
\end{itemize}
\end{definition}

\begin{definition}[Banach Enrichment]
A category/multicategory is enriched in Banach spaces if each hom-set $\mathrm{Hom}(X_1,\dots,X_n;Y)$ is a Banach space and composition is a bounded multilinear map.
\end{definition}

\begin{definition}[Symmetric Monoidal Multicategory]
A symmetric monoidal multicategory has:
\begin{itemize}
    \item A tensor product $\otimes$ on objects.
    \item Natural isomorphisms for associativity, unit, and symmetry.
    \item Compatibility between tensor product and multimorphisms.
\end{itemize}
\end{definition}

\subsection{Core Axioms}

\begin{axiom}[Objects and Hom-Spaces]\label{ax:H1-H2}
\begin{enumerate}[label=(\alph*)]
    \item \textbf{Objects:} The objects of $\mathbf{HilbMult}$ are (separable) complex Hilbert spaces $H, K, \dots$.
    
    \item \textbf{Hom-Spaces:} For each finite tuple $(H_1,\dots,H_n;K)$, there is a Banach space:
    \[
    \mathrm{Hom}(H_1,\dots,H_n;K)
    \]
    whose elements are (equivalence classes of) bounded multilinear maps $T: H_1\times\cdots\times H_n \to K$. In the unary case ($n=1$), these are bounded linear maps $H \to K$.
    
    \item \textbf{Norm:} The Banach norm is the operator norm:
    \[
    \|T\| := \sup_{\|x_i\|\le 1} \|T(x_1,\dots,x_n)\|
    \]
\end{enumerate}
\end{axiom}

\begin{axiom}[Composition and Identities]\label{ax:H3-H4}
\begin{enumerate}[label=(\alph*)]
    \item \textbf{Multilinear Composition:} For composable multimorphisms:
    \begin{align*}
        &S \in \mathrm{Hom}(K_1,\dots,K_m;L), \\
        &T_j \in \mathrm{Hom}(H_{j,1},\dots,H_{j,n_j};K_j) \quad (1\le j\le m),
    \end{align*}
    there is a composition $S\circ (T_1,\dots,T_m) \in \mathrm{Hom}(H_{1,1},\dots,H_{m,n_m};L)$ given by:
    \[
    (S\circ (T_j))(\mathbf{x}) := S(T_1(\mathbf{x}_1),\dots,T_m(\mathbf{x}_m))
    \]
    Composition is multilinear and contractive:
    \[
    \|S\circ(T_1,\dots,T_m)\| \le \|S\|\cdot\prod_{j=1}^m \|T_j\|
    \]
    
    \item \textbf{Identities:} For each object $H$, there is an identity $\mathrm{id}_H \in \mathrm{Hom}(H;H)$ given by $\mathrm{id}_H(x) = x$. These satisfy:
    \[
    T \circ (\mathrm{id}_{H_1}, \dots, \mathrm{id}_{H_n}) = T
    \]
    for any $T \in \mathrm{Hom}(H_1,\dots,H_n;K)$.
\end{enumerate}
\end{axiom}

\paragraph{Monoidal Structure}

\begin{axiom}[Symmetric Monoidal Structure]\label{ax:H5}
$\mathbf{HilbMult}$ has a symmetric monoidal structure:
\begin{enumerate}[label=(\alph*)]
    \item \textbf{Tensor Product:} 
    \begin{itemize}
        \item On objects: $H \otimes K$ is the completed Hilbert space tensor product.
        \item Unit object: $\mathbb{C}$ (complex numbers as 1D Hilbert space).
        \item On multimorphisms: For $T \in \mathrm{Hom}(H_1,\dots,H_n;K)$ and $T' \in \mathrm{Hom}(H'_1,\dots,H'_p;K')$, their tensor product is:
        \[
        T \otimes T' \in \mathrm{Hom}(H_1,\dots,H_n,H'_1,\dots,H'_p; K \otimes K')
        \]
        defined by $(T \otimes T')(\mathbf{x}, \mathbf{x}') = T(\mathbf{x}) \otimes T'(\mathbf{x}')$.
    \end{itemize}
    
    \item \textbf{Structural Isomorphisms:} Natural unitary isomorphisms:
    \begin{align*}
        \text{Associator: } &\alpha_{H,K,L}: (H \otimes K) \otimes L \xrightarrow{\cong} H \otimes (K \otimes L) \\
        \text{Braiding: } &\sigma_{H,K}: H \otimes K \xrightarrow{\cong} K \otimes H \\
        \text{Unitors: } &\lambda_H: \mathbb{C} \otimes H \xrightarrow{\cong} H, \quad \rho_H: H \otimes \mathbb{C} \xrightarrow{\cong} H
    \end{align*}
    satisfying the coherence conditions (pentagon, hexagon, triangle identities).
    
    \item \textbf{Permutation Action:} For $T \in \mathrm{Hom}(H_1,\dots,H_n;K)$ and $\pi \in S_n$:
    \[
    T^\pi(x_1,\dots,x_n) = T(x_{\pi(1)},\dots,x_{\pi(n)})
    \]
\end{enumerate}
All structural isomorphisms are isometric.
\end{axiom}

\paragraph{Advanced Structure}

\begin{axiom}[Closed Structure / Currying]\label{ax:H6}
$\mathbf{HilbMult}$ is closed under currying operations with natural isometric isomorphisms:

\begin{enumerate}[label=(\alph*)]
    \item \textbf{Basic Currying:}
    \[
    \mathrm{Hom}(H_1, \dots, H_n; K) \cong \mathrm{Hom}(H_1; \mathrm{Hom}(H_2, \dots, H_n; K))
    \]
    via $T \mapsto \Lambda_1(T)$ where $\Lambda_1(T)(x_1)(x_2, \dots, x_n) = T(x_1, x_2, \dots, x_n)$.
    
    \item \textbf{Tensor-Curry Duality:}
    \[
    \mathrm{Hom}(H \otimes K; L) \cong \mathrm{Hom}(H; \mathrm{Hom}(K; L))
    \]
    via $F \mapsto \Lambda(F)$ where $\Lambda(F)(x)(y) = F(x \otimes y)$.
    
    \item \textbf{Partial Currying:} For any $1 \le j \le n$:
    \[
    \mathrm{Hom}(H_1, \dots, H_n; K) \cong \mathrm{Hom}(H_1, \dots, H_{j-1}; \mathrm{Hom}(H_j, \dots, H_n; K))
    \]
\end{enumerate}
All isomorphisms preserve norms and are compatible with composition.
\end{axiom}

\begin{axiom}[Optional $C^*$-Structure]\label{ax:H7}
We may optionally endow endomorphism spaces with $C^*$-algebra structure:

\begin{enumerate}[label=(\alph*)]
    \item \textbf{Involution:} For each $H$, the space $\mathcal{B}(H) = \mathrm{Hom}(H;H)$ has a conjugate-linear involution $*$ satisfying:
    \begin{align*}
        (f^*)^* &= f, \quad (f \circ g)^* = g^* \circ f^*, \quad (\lambda f)^* = \bar{\lambda} f^*
    \end{align*}
    
    \item \textbf{$C^*$-Property:} The norm satisfies:
    \[
    \|f^* \circ f\| = \|f\|^2 \quad \text{and} \quad \|f \circ g\| \le \|f\|\|g\|
    \]
    
    \item \textbf{Monoidal Compatibility:} For $f \in \mathrm{Hom}(H_1; H_2)$, $g \in \mathrm{Hom}(K_1; K_2)$:
    \[
    (f \otimes g)^* = f^* \otimes g^*
    \]
\end{enumerate}

This enables the definition of:
\begin{itemize}
    \item \textbf{Self-adjoint elements:} $f^* = f$
    \item \textbf{Normal elements:} $f^* \circ f = f \circ f^*$  
    \item \textbf{Positive elements:} $f = g^* \circ g$ for some $g$
\end{itemize}
Essential for quantum theory (states, observables, measurements).
\end{axiom}

\subsection{Immediate Lemmas}

\begin{lemma}[Norm Completeness]\label{lem:completeness}
For any tuple $(H_1,\dots,H_n;K)$, the hom-space $\mathrm{Hom}(H_1,\dots,H_n;K)$ is a Banach space (complete in the operator norm).
\end{lemma}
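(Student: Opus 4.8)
The plan is to run the classical argument that a Cauchy sequence in the operator norm has a pointwise limit which is itself the norm limit, using only that the target $K$ is complete (automatic, since $K$ is a Hilbert space). First I would fix a sequence $(T_k)_{k\ge 1}$ in $\mathrm{Hom}(H_1,\dots,H_n;K)$ that is Cauchy for the operator norm of Axiom~\ref{ax:H1-H2}(c). For any fixed tuple $(x_1,\dots,x_n)$, the multilinear estimate $\|T_k(x_1,\dots,x_n)-T_l(x_1,\dots,x_n)\|_K\le \|T_k-T_l\|\,\prod_{i=1}^n\|x_i\|_{H_i}$ shows $(T_k(x_1,\dots,x_n))_k$ is Cauchy in $K$; by completeness of $K$ it converges, and I define $T(x_1,\dots,x_n)$ to be this limit.

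Next I would verify $T\in\mathrm{Hom}(H_1,\dots,H_n;K)$. Separate linearity in each slot passes to the limit because addition and scalar multiplication are continuous in $K$, so $T$ is multilinear. For boundedness, a Cauchy sequence is bounded, say $\sup_k\|T_k\|=M<\infty$; then for $\|x_i\|_{H_i}\le 1$ one gets $\|T(x_1,\dots,x_n)\|_K=\lim_k\|T_k(x_1,\dots,x_n)\|_K\le M$, hence $\|T\|\le M<\infty$.

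Finally I would show $T_k\to T$ in operator norm. Given $\varepsilon>0$, choose $N$ with $\|T_k-T_l\|\le\varepsilon$ for all $k,l\ge N$; then for every tuple with $\|x_i\|_{H_i}\le 1$ and every $k\ge N$, letting $l\to\infty$ in the inequality $\|T_k(x_1,\dots,x_n)-T_l(x_1,\dots,x_n)\|_K\le\varepsilon$ yields $\|T_k(x_1,\dots,x_n)-T(x_1,\dots,x_n)\|_K\le\varepsilon$, so $\|T_k-T\|\le\varepsilon$ for $k\ge N$. This completes the proof.

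I do not expect a genuine obstacle; the only points needing a touch of care are the interchange of the pointwise limit with the supremum over the unit ball (controlled by the uniform estimate above, which is exactly why convergence upgrades from pointwise to operator-norm) and the parenthetical ``equivalence classes'' in Axiom~\ref{ax:H1-H2}(b) --- but for honestly everywhere-defined, jointly bounded multilinear maps between Hilbert spaces no quotient is actually required, so the argument is literally the textbook one for $\mathcal{B}(X,Y)$. As an alternative I would note that the statement also follows by induction on $n$ from the isometric currying isomorphism of Axiom~\ref{ax:H6}(a), $\mathrm{Hom}(H_1,\dots,H_n;K)\cong\mathrm{Hom}(H_1;\mathrm{Hom}(H_2,\dots,H_n;K))$, together with the classical fact that $\mathcal{B}(X,Y)$ is complete whenever $Y$ is; but I would carry out the direct proof above rather than invoke Axiom~\ref{ax:H6}.
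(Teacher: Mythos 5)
Your proposal is correct and follows essentially the same route as the paper's proof: both take a Cauchy sequence, use the multilinear operator-norm estimate and completeness of $K$ to define a pointwise limit, verify multilinearity and boundedness of the limit, and upgrade to operator-norm convergence via the standard $\varepsilon$-argument with the supremum over the unit ball. Your additional remarks (the slight extra care the student version takes with boundedness via $\sup_k\|T_k\|=M$, and the noted alternative via currying) are refinements rather than deviations.
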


\begin{proof}
Let $\{T_m\}$ be a Cauchy sequence in $\mathrm{Hom}(H_1,\dots,H_n;K)$. For any fixed vectors $x_1 \in H_1, \dots, x_n \in H_n$ with $\|x_i\| \le 1$, the sequence $\{T_m(x_1,\dots,x_n)\}$ is Cauchy in $K$ because:
\[
\|T_m(\mathbf{x}) - T_k(\mathbf{x})\| \le \|T_m - T_k\| \cdot \|x_1\| \cdots \|x_n\| \le \|T_m - T_k\|
\]
Since $K$ is a Hilbert space (hence complete), $\{T_m(\mathbf{x})\}$ converges to some element in $K$, which we denote by $T(x_1,\dots,x_n)$.

This pointwise convergence defines a multilinear map $T: H_1 \times \cdots \times H_n \to K$ because:
\begin{itemize}
    \item Each $T_m$ is multilinear, and multilinearity is preserved under pointwise limits
    \item The convergence is uniform on the unit ball, ensuring boundedness
\end{itemize}

To show $T$ is bounded, note that for sufficiently large $m$:
\[
\|T_m(\mathbf{x})\| \le \|T_m\| \cdot \|x_1\| \cdots \|x_n\| \le M \quad \text{for some } M > 0
\]
Hence $\|T(\mathbf{x})\| \le M$ for all unit vectors, so $T \in \mathrm{Hom}(H_1,\dots,H_n;K)$.

Finally, for any $\epsilon > 0$, choose $N$ such that $\|T_m - T_k\| < \epsilon$ for all $m,k \ge N$. Then for all unit vectors $\mathbf{x}$:
\[
\|T_m(\mathbf{x}) - T(\mathbf{x})\| = \lim_{k \to \infty} \|T_m(\mathbf{x}) - T_k(\mathbf{x})\| \le \epsilon
\]
Taking supremum gives $\|T_m - T\| \le \epsilon$ for $m \ge N$, so $T_m \to T$ in norm.
\end{proof}

\begin{lemma}[Composition Contractivity]\label{lem:contractivity}
For composable multimorphisms $S \in \operatorname{Hom}(K_1,\ldots,K_m; L)$ and $T_j \in \operatorname{Hom}(H_{j,1},\ldots,H_{j,n_j}; K_j)$:
\[
\|S \circ (T_1, \dots, T_m)\| \le \|S\| \cdot \prod_{j=1}^m \|T_j\|
\]
\end{lemma}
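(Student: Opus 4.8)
The statement follows directly from the definition of the operator norm in Axiom~\ref{ax:H1-H2}(c) together with the multilinearity of $S$, so the plan is a short estimate followed by a supremum. First I would record the homogeneity bound for a single multimorphism: if $S \in \operatorname{Hom}(K_1,\dots,K_m;L)$ and $y_1,\dots,y_m$ are \emph{arbitrary} (not necessarily unit) vectors, then
\[
\|S(y_1,\dots,y_m)\| \le \|S\| \cdot \prod_{j=1}^m \|y_j\|.
\]
This is obtained from $\|S\| = \sup_{\|y_j\|\le 1}\|S(\mathbf{y})\|$ by rescaling each nonzero $y_j$ to $y_j/\|y_j\|$ and using multilinearity to pull out the scalars $\|y_j\|$; the case where some $y_j=0$ is trivial since then $S(\mathbf{y})=0$ by linearity in that slot.

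Next I would unwind the definition of the composite from Axiom~\ref{ax:H3-H4}(a). Write the combined input tuple as $\mathbf{x} = (\mathbf{x}_1,\dots,\mathbf{x}_m)$, where each block $\mathbf{x}_j = (x_{j,1},\dots,x_{j,n_j})$ feeds into $T_j$, and assume every component vector satisfies $\|x_{j,i}\| \le 1$. Then
\[
\bigl\|(S\circ(T_1,\dots,T_m))(\mathbf{x})\bigr\| = \bigl\|S\bigl(T_1(\mathbf{x}_1),\dots,T_m(\mathbf{x}_m)\bigr)\bigr\| \le \|S\|\cdot\prod_{j=1}^m \|T_j(\mathbf{x}_j)\|,
\]
applying the homogeneity bound above with $y_j = T_j(\mathbf{x}_j)$. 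For each $j$, since all the $x_{j,i}$ are unit vectors, the definition of $\|T_j\|$ gives $\|T_j(\mathbf{x}_j)\| \le \|T_j\|$, hence
\[
\bigl\|(S\circ(T_1,\dots,T_m))(\mathbf{x})\bigr\| \le \|S\|\cdot\prod_{j=1}^m \|T_j\|.
\]
Taking the supremum over all such unit input tuples $\mathbf{x}$ yields the claimed inequality.

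\textbf{Main obstacle.} There is no substantial obstacle; the only point requiring care is the justification of the single-map homogeneity bound (that the operator norm, defined as a supremum over the \emph{closed} unit polydisc, indeed controls $\|S(\mathbf{y})\|$ for all $\mathbf{y}$), and the bookkeeping of the nested index structure $(j,i)$ so that "all component vectors are unit" correctly matches "each block $\mathbf{x}_j$ lies in the unit polydisc of $T_j$." I would also remark that this lemma simply re-derives the contractivity already asserted in Axiom~\ref{ax:H3-H4}(a), making it an internal consistency check rather than a new input.
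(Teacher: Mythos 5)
Your proposal is correct and follows essentially the same route as the paper: evaluate the composite on unit input tuples, bound via $\|S\|$ and then each $\|T_j\|$, and take the supremum. Your explicit rescaling argument for the homogeneity bound $\|S(y_1,\dots,y_m)\|\le\|S\|\prod_j\|y_j\|$ is a welcome clarification of the paper's terser "by definition of $\|S\|$" step (which is applied to the vectors $T_j(\mathbf{x}_j)$, not unit vectors), but it does not change the substance of the argument.
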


\begin{proof}
Let $x_{j,i} \in H_{j,i}$ with $\|x_{j,i}\| \le 1$ for all $j = 1,\ldots,m$ and $i = 1,\ldots,n_j$. Then:

\begin{align*}
&\|(S \circ (T_1, \dots, T_m))(x_{1,1}, \dots, x_{m,n_m})\| \\
&= \|S(T_1(x_{1,1}, \dots, x_{1,n_1}), \dots, T_m(x_{m,1}, \dots, x_{m,n_m}))\| \\
&\le \|S\| \cdot \prod_{j=1}^m \|T_j(x_{j,1}, \dots, x_{j,n_j})\| \quad \text{(by definition of $\|S\|$)} \\
&\le \|S\| \cdot \prod_{j=1}^m \left(\|T_j\| \cdot \prod_{i=1}^{n_j} \|x_{j,i}\|\right) \quad \text{(by definition of $\|T_j\|$)} \\
&\le \|S\| \cdot \prod_{j=1}^m \|T_j\| \quad \text{(since $\|x_{j,i}\| \le 1$)}
\end{align*}

Taking the supremum over all such input vectors with $\|x_{j,i}\| \le 1$ gives the desired inequality.
\end{proof}

\begin{lemma}[Tensor Product Isometry]\label{lem:tensor_isometry}
Let $T \in \mathrm{Hom}(H_1, \dots, H_n; K)$ and $T' \in \mathrm{Hom}(H'_1, \dots, H'_p; K')$ be multilinear maps between Hilbert spaces as in Axiom~\ref{ax:H5}(a). Then:
\[
\|T \otimes T'\| = \|T\| \cdot \|T'\|,
\]
where the norms are the operator norms for multilinear maps.
\end{lemma}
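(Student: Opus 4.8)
The plan is to reduce everything to the single analytic fact that the Hilbert space tensor product is a \emph{cross norm}: for $v \in K$ and $w \in K'$ one has $\|v \otimes w\|_{K \otimes K'} = \|v\|_K \cdot \|w\|_{K'}$. I would first record this, since it is what distinguishes the Hilbert-space situation from a general Banach tensor product: by the defining inner product on $K \otimes K'$ one computes $\langle v \otimes w,\, v \otimes w\rangle_{K \otimes K'} = \langle v, v\rangle_K \,\langle w, w\rangle_{K'}$, and taking square roots gives the claim. (If needed, this extends by the parallelogram/polarization bookkeeping to guarantee $T \otimes T'$ is well-defined and bounded in the first place, but that is already packaged in Axiom~\ref{ax:H5}(a).)

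Next I would unwind the operator norm of $T \otimes T'$ directly from Axiom~\ref{ax:H1-H2}(c) and the definition $(T \otimes T')(\mathbf{x},\mathbf{x}') = T(\mathbf{x}) \otimes T'(\mathbf{x}')$:
\[
\|T \otimes T'\| \;=\; \sup_{\substack{\|x_i\| \le 1,\ 1 \le i \le n \\ \|x'_j\| \le 1,\ 1 \le j \le p}} \bigl\| T(x_1,\dots,x_n) \otimes T'(x'_1,\dots,x'_p)\bigr\|_{K \otimes K'}.
\]
Applying the cross-norm identity to the right-hand side turns the supremand into the product $\|T(\mathbf{x})\|_K \cdot \|T'(\mathbf{x}')\|_{K'}$, where the two factors depend on disjoint sets of variables ranging over independent unit balls.

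The $\le$ direction is then immediate: for any admissible $\mathbf{x},\mathbf{x}'$ we have $\|T(\mathbf{x})\|_K \le \|T\|$ and $\|T'(\mathbf{x}')\|_{K'} \le \|T'\|$, so the supremum is bounded by $\|T\|\cdot\|T'\|$. For the reverse inequality, given $\varepsilon > 0$ I would choose unit vectors $\mathbf{x}$ with $\|T(\mathbf{x})\|_K \ge \|T\| - \varepsilon$ and, independently, unit vectors $\mathbf{x}'$ with $\|T'(\mathbf{x}')\|_{K'} \ge \|T'\| - \varepsilon$; feeding the concatenated tuple $(\mathbf{x},\mathbf{x}')$ into $T \otimes T'$ yields
\[
\|T \otimes T'\| \;\ge\; \|T(\mathbf{x})\|_K \cdot \|T'(\mathbf{x}')\|_{K'} \;\ge\; (\|T\| - \varepsilon)(\|T'\| - \varepsilon),
\]
and letting $\varepsilon \to 0$ gives $\|T \otimes T'\| \ge \|T\|\cdot\|T'\|$. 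Combining the two bounds proves the equality.

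There is no serious obstacle here; the only point requiring care is the cross-norm identity itself — it is genuinely an artifact of the Hilbert structure (a general Banach projective or injective tensor norm need only be sub- or super-multiplicative on elementary tensors), so the proof should make explicit that it uses the inner-product definition of $H \otimes K$ from Axiom~\ref{ax:H5}(a). A secondary, purely logical point worth stating cleanly is that the supremum of a product of two nonnegative functions over a product domain, each function depending on only one coordinate block, factors as the product of the two suprema; this is what legitimizes treating the optimizations over $\mathbf{x}$ and $\mathbf{x}'$ separately in the $\ge$ direction.
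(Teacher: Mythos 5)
Your proposal is correct and follows essentially the same route as the paper's proof: both use the cross-norm identity $\|v \otimes w\| = \|v\|\,\|w\|$ for the Hilbert tensor product to get the upper bound on pure-tensor inputs, and the same $\varepsilon$-argument with near-optimal unit vectors for the lower bound. Your explicit remarks on why the cross-norm identity holds and why the supremum factors over independent variable blocks are welcome clarifications but do not change the argument.
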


\begin{proof}
We establish the equality by proving both inequalities.

\medskip\noindent
\textbf{Upper bound ($\|T \otimes T'\| \leq \|T\| \cdot \|T'\|$):}

Let $x_i \in H_i$ and $x'_j \in H'_j$ be unit vectors for $i = 1,\dots,n$ and $j = 1,\dots,p$. Then:
\begin{align*}
\|(T \otimes T')(x_1, \dots, x_n, x'_1, \dots, x'_p)\| 
&= \|T(x_1, \dots, x_n) \otimes T'(x'_1, \dots, x'_p)\| \\
&= \|T(x_1, \dots, x_n)\| \cdot \|T'(x'_1, \dots, x'_p)\| \quad \text{(by Hilbert tensor product properties)} \\
&\leq \|T\| \cdot \|T'\|.
\end{align*}
Since the tensor product $T \otimes T'$ is defined on pure tensors and extends by multilinearity, taking the supremum over all unit vectors gives $\|T \otimes T'\| \leq \|T\| \cdot \|T'\|$.

\medskip\noindent
\textbf{Lower bound ($\|T \otimes T'\| \geq \|T\| \cdot \|T'\|$):}

Let $\epsilon > 0$. By definition of the operator norm, there exist unit vectors $\mathbf{x} = (x_1, \dots, x_n)$ and $\mathbf{x}' = (x'_1, \dots, x'_p)$ such that:
\[
\|T(\mathbf{x})\| \geq \|T\| - \epsilon \quad \text{and} \quad \|T'(\mathbf{x}')\| \geq \|T'\| - \epsilon.
\]
Then:
\begin{align*}
\|T \otimes T'\| &\geq \|(T \otimes T')(\mathbf{x}, \mathbf{x}')\| \\
&= \|T(\mathbf{x}) \otimes T'(\mathbf{x}')\| \\
&= \|T(\mathbf{x})\| \cdot \|T'(\mathbf{x}')\| \\
&\geq (\|T\| - \epsilon)(\|T'\| - \epsilon).
\end{align*}
Since $\epsilon > 0$ was arbitrary, we conclude that $\|T \otimes T'\| \geq \|T\| \cdot \|T'\|$.

\medskip\noindent
Combining both inequalities yields $\|T \otimes T'\| = \|T\| \cdot \|T'\|$.
\end{proof}

\section{Banach Enrichment}\label{sec:Banach Enrichment}

Before establishing its more advanced structures, we must first verify that $\mathbf{HilbMult}$ satisfies the fundamental requirements of a multicategory. The following Theorem~\ref{thm:banach-enrichment} confirms its basic enriched categorical structure, providing the necessary analytic foundation.

\begin{theorem}[Banach Multicategory Structure]\label{thm:banach-enrichment}
The multicategory $\mathbf{HilbMult}$ satisfies:
\begin{enumerate}[label=(\roman*)]
    \item For every tuple $(H_1,\dots,H_n;K)$, the hom-space $\mathrm{Hom}(H_1,\dots,H_n;K)$ is a Banach space.
    \item Composition is multilinear and contractive:
    \[
    \|S \circ (T_1, \dots, T_m)\| \le \|S\| \cdot \prod_{j=1}^m \|T_j\|
    \]
    for all composable multimorphisms $S, T_1, \dots, T_m$.
    \item Composition is jointly continuous in the Banach norms.
\end{enumerate}
\end{theorem}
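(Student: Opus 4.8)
The plan is to read off (i) and (ii) from the immediate lemmas already proved, and to derive the only genuinely new assertion --- joint continuity in (iii) --- from contractivity by an elementary telescoping estimate.

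Part (i) is precisely Lemma~\ref{lem:completeness}: by Axiom~\ref{ax:H1-H2} each $\mathrm{Hom}(H_1,\dots,H_n;K)$ is a normed vector space under the operator norm, and Lemma~\ref{lem:completeness} shows it is complete, hence a Banach space. For part (ii), the contractive estimate $\|S\circ(T_1,\dots,T_m)\|\le\|S\|\prod_{j=1}^m\|T_j\|$ is exactly Lemma~\ref{lem:contractivity}; it remains only to record that composition is multilinear, i.e.\ linear in $S$ and separately in each $T_j$. This is immediate from the pointwise formula of Axiom~\ref{ax:H3-H4}, $(S\circ(T_1,\dots,T_m))(\mathbf{x})=S(T_1(\mathbf{x}_1),\dots,T_m(\mathbf{x}_m))$: linearity in the $S$-slot is linearity of evaluating $S$ at a fixed tuple of vectors, and linearity in the $T_j$-slot holds because the $j$-th argument of $S$ depends linearly on $T_j$ while $S$ is linear in that argument. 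Together with the norm bound, this exhibits composition as a bounded multilinear map of Banach spaces, which is the enrichment condition.

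For part (iii), I would invoke the standard fact that a bounded multilinear map between normed spaces is jointly continuous --- indeed Lipschitz on bounded sets --- and make it explicit by telescoping. Write $S'=S+\Delta S$ and $T'_j=T_j+\Delta T_j$. Using the multilinearity established in (ii), one has the decomposition
\[
S'\circ(T'_1,\dots,T'_m)-S\circ(T_1,\dots,T_m)
=\Delta S\circ(T'_1,\dots,T'_m)+\sum_{j=1}^m S\circ(T_1,\dots,T_{j-1},\Delta T_j,T'_{j+1},\dots,T'_m),
\]
a sum of $m+1$ compositions in each of which exactly one factor is an increment. Applying Lemma~\ref{lem:contractivity} to each summand and bounding $\|T'_k\|\le\|T_k\|+\|\Delta T_k\|$ gives
\[
\bigl\|S'\circ(T'_1,\dots,T'_m)-S\circ(T_1,\dots,T_m)\bigr\|
\le\|\Delta S\|\prod_{k=1}^m\bigl(\|T_k\|+\|\Delta T_k\|\bigr)+\|S\|\sum_{j=1}^m\|\Delta T_j\|\prod_{k\neq j}\bigl(\|T_k\|+\|\Delta T_k\|\bigr),
\]
which tends to $0$ as $\|\Delta S\|,\|\Delta T_1\|,\dots,\|\Delta T_m\|\to 0$ with $\|S\|$ and the $\|T_j\|$ held fixed. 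Hence composition is jointly continuous in the Banach norms.

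There is no deep obstacle here; the content is essentially bookkeeping. The only point needing care is the telescoping step in (iii): one must use the multilinearity from (ii) to collapse each single-slot difference $S\circ(\dots,T'_j,\dots)-S\circ(\dots,T_j,\dots)$ into the single composition $S\circ(\dots,\Delta T_j,\dots)$, and then keep the norms of the unchanged factors under control via $\|T'_k\|\le\|T_k\|+\|\Delta T_k\|$. Everything else is a direct appeal to Lemmas~\ref{lem:completeness} and~\ref{lem:contractivity}.
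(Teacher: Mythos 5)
Your proposal is correct and follows essentially the same route as the paper: part (i) is the completeness argument of Lemma~\ref{lem:completeness}, part (ii) combines the pointwise multilinearity of composition with the estimate of Lemma~\ref{lem:contractivity}, and part (iii) is the same add-and-subtract continuity estimate the paper uses (your $m+1$-term telescoping with the bound $\|T'_k\|\le\|T_k\|+\|\Delta T_k\|$ is, if anything, slightly more careful bookkeeping than the paper's two-step split, but the idea is identical).
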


\begin{proof}
We prove each part systematically.

\medskip\noindent
\textbf{Part (i): Banach space structure.}

By Axiom~\ref{ax:H1-H2}(b), each $\mathrm{Hom}(H_1,\dots,H_n;K)$ is equipped with the operator norm:
\[
\|T\| := \sup\{\|T(x_1,\dots,x_n)\| : \|x_i\|\le 1 \text{ for } i=1,\dots,n\}.
\]

\begin{itemize}
    \item \textbf{Vector space structure}: The set of bounded multilinear maps forms a complex vector space under pointwise operations. Linear combinations of bounded multilinear maps remain multilinear, and boundedness follows from the triangle inequality.
    
    \item \textbf{Norm axioms}: The operator norm satisfies:
    \begin{itemize}
        \item $\|T\| \ge 0$ and $\|T\| = 0$ if and only if $T = 0$ (by Axiom~\ref{ax:H1-H2}(b))
        \item $\|\lambda T\| = |\lambda| \|T\|$ for all $\lambda \in \mathbb{C}$ (homogeneity)
        \item $\|T + U\| \le \|T\| + \|U\|$ (triangle inequality)
    \end{itemize}
    
    \item \textbf{Completeness}: Let $\{T_k\}$ be a Cauchy sequence in $\mathrm{Hom}(H_1,\dots,H_n;K)$. For fixed unit vectors $x_1 \in H_1, \dots, x_n \in H_n$, the sequence $\{T_k(x_1,\dots,x_n)\}$ is Cauchy in $K$ because:
    \[
    \|T_k(\mathbf{x}) - T_\ell(\mathbf{x})\| \le \|T_k - T_\ell\| \cdot \|x_1\| \cdots \|x_n\| = \|T_k - T_\ell\|.
    \]
    Since $K$ is a Hilbert space (hence complete), $\{T_k(\mathbf{x})\}$ converges to some $T(x_1,\dots,x_n) \in K$.
    
    The limit map $T: H_1 \times \cdots \times H_n \to K$ is multilinear because each $T_k$ is multilinear and multilinearity is preserved under pointwise convergence. To show $T$ is bounded, note that Cauchy sequences are bounded, so $\|T_k\| \le M$ for some $M > 0$ and all $k$. Then:
    \[
    \|T(\mathbf{x})\| = \lim_{k \to \infty} \|T_k(\mathbf{x})\| \le M \cdot \|x_1\| \cdots \|x_n\|,
    \]
    hence $T \in \mathrm{Hom}(H_1,\dots,H_n;K)$.
    
    Finally, for any $\epsilon > 0$, choose $N$ such that $\|T_k - T_\ell\| < \epsilon$ for all $k,\ell \ge N$. Then for all unit vectors $\mathbf{x}$ and $k \ge N$:
    \[
    \|T_k(\mathbf{x}) - T(\mathbf{x})\| = \lim_{\ell \to \infty} \|T_k(\mathbf{x}) - T_\ell(\mathbf{x})\| \le \epsilon.
    \]
    Taking supremum gives $\|T_k - T\| \le \epsilon$ for $k \ge N$, so $T_k \to T$ in norm.
\end{itemize}

\medskip\noindent
\textbf{Part (ii): Multilinearity and contractivity.}

By Axiom~\ref{ax:H3-H4}(a), composition is defined as:
\[
(S \circ (T_1, \dots, T_m))(\mathbf{x}) = S(T_1(\mathbf{x}_1), \dots, T_m(\mathbf{x}_m)).
\]

\begin{itemize}
    \item \textbf{Multilinearity}: Composition is multilinear in each argument $T_j$ because for fixed \\
    $T_1, \dots, T_{j-1}, T_{j+1}, \dots, T_m$, the map
    \[
    T_j \mapsto S \circ (T_1, \dots, T_j, \dots, T_m)
    \]
    is linear. This follows from the linearity of $S$ in its $j$-th argument and the vector space structure from Part (i).
    
    \item \textbf{Contractivity}: For any unit vectors $x_{j,1}, \dots, x_{j,n_j}$ (i.e., $\|x_{j,i}\| \le 1$):
    \begin{align*}
    &\quad \|(S \circ (T_1, \dots, T_m))(x_{1,1}, \dots, x_{m,n_m})\| \\
    &= \|S(T_1(x_{1,1}, \dots, x_{1,n_1}), \dots, T_m(x_{m,1}, \dots, x_{m,n_m}))\| \\
    &\le \|S\| \cdot \prod_{j=1}^m \|T_j(x_{j,1}, \dots, x_{j,n_j})\| \quad \text{(by definition of $\|S\|$)} \\
    &\le \|S\| \cdot \prod_{j=1}^m \|T_j\| \quad \text{(since $\|T_j(\mathbf{x}_j)\| \le \|T_j\|$)}.
    \end{align*}
    Taking supremum over all such unit vectors gives:
    \[
    \|S \circ (T_1, \dots, T_m)\| \le \|S\| \cdot \prod_{j=1}^m \|T_j\|.
    \]
\end{itemize}

\medskip\noindent
\textbf{Part (iii): Joint continuity.}

The contractivity inequality implies joint continuity. For any $\epsilon > 0$, choose $\delta > 0$ such that:
\[
\delta \cdot \left(\prod_{j=1}^m \|T_j\| + \|S\| \cdot \sum_{j=1}^m \prod_{k \neq j} \|T_k\| \right) < \epsilon.
\]
Then if $\|S - S'\| < \delta$ and $\|T_j - T_j'\| < \delta$ for all $j$, we have:
\begin{align*}
&\quad \|S \circ (T_1, \dots, T_m) - S' \circ (T_1', \dots, T_m')\| \\
&\le \|S \circ (T_1, \dots, T_m) - S' \circ (T_1, \dots, T_m)\| \\
&\quad + \|S' \circ (T_1, \dots, T_m) - S' \circ (T_1', \dots, T_m')\| \\
&\le \|S - S'\|\cdot\prod_{j=1}^m \|T_j\| + \|S'\|\cdot\sum_{j=1}^m \|T_j - T_j'\|\cdot\prod_{k \neq j} \|T_k\| \\
&< \epsilon.
\end{align*}
This establishes joint continuity.
\end{proof}

\begin{remark}
This theorem provides the essential analytic foundation for $\mathbf{HilbMult}$:
\begin{itemize}
    \item \textbf{Norm estimates}: The contractivity inequality enables control over composed morphisms.
    \item \textbf{Convergence of series}: Completeness allows working with infinite series of multimorphisms.
    \item \textbf{Perturbation theory}: Joint continuity ensures small changes in components yield small changes in compositions.
\end{itemize}
These properties are crucial for functional-analytic methods in categorical quantum mechanics and operator theory.
\end{remark}

\section{Representability and Adjoints}\label{sec:Representability and Adjoints}

The interplay between representability and adjoint operations forms a cornerstone of enriched category theory. In this section, we establish two fundamental results that connect these concepts in the multicategorical setting. First, Theorem~\ref{thm:representability-currying} demonstrates how representability of hom-functors naturally gives rise to currying operations in any symmetric multicategory, providing a universal factorization property for multimorphisms. Building upon this abstract foundation, Theorem~\ref{thm:adjoints-mates-hilbmult} provides the properties of existence of adjoints/mates in $\mathbf{HilbMult}$. These results reveal how the representability framework systematically generates adjoint-like operations, while the concrete Hilbert space structure ensures these operations satisfy the expected analytic and algebraic properties, including norm preservation, compositionality, and tensor compatibility.

\begin{theorem}[Representability Yields Currying]
\label{thm:representability-currying}
Let $\mathcal{M}$ be a symmetric multicategory, and let $T \in$ \\ $\mathrm{Hom}_{\mathcal{M}}(H_1, \dots, H_n; K)$ be a multimorphism. 
Fix an index $i \in \{1, \dots, n\}$, and consider the functor
\[
\Phi_i(X) = \mathrm{Hom}_{\mathcal{M}}(H_1, \dots, H_{i-1}, X, H_{i+1}, \dots, H_n; K)
\]
that sends an object $X$ to multimorphisms with $X$ in the $i$-th slot.

If $\Phi_i$ is \emph{representable}—meaning there exists an object $H_i^\vee$ and universal morphism
\[
u_i \in \mathrm{Hom}_{\mathcal{M}}(H_1, \dots, H_{i-1}, H_i^\vee, H_{i+1}, \dots, H_n; K)
\]
such that for every $S \in \Phi_i(X)$ there exists a unique $\tilde{S} \in \mathrm{Hom}_{\mathcal{M}}(X; H_i^\vee)$ with
\[
S = u_i \circ (\mathrm{id}_{H_1}, \dots, \mathrm{id}_{H_{i-1}}, \tilde{S}, \mathrm{id}_{H_{i+1}}, \dots, \mathrm{id}_{H_n})
\]
—then:

\begin{enumerate}[label=(\roman*)]
    \item Every multimorphism $T$ has a \emph{curried form} $T^\sharp \in \mathrm{Hom}_{\mathcal{M}}(H_i; H_i^\vee)$ satisfying:
    \[
    T = u_i \circ (\mathrm{id}_{H_1}, \dots, \mathrm{id}_{H_{i-1}}, T^\sharp, \mathrm{id}_{H_{i+1}}, \dots, \mathrm{id}_{H_n})
    \]
    
    \item For each object $X$, we have a bijection:
    \[
    \Theta_X : \mathrm{Hom}_{\mathcal{M}}(H_1, \dots, H_{i-1}, X, H_{i+1}, \dots, H_n; K) 
    \xrightarrow{\cong} \mathrm{Hom}_{\mathcal{M}}(X; H_i^\vee)
    \]
    defined by $\Theta_X(S) = \tilde{S}$.
    
    \item The bijections $\Theta_X$ are natural in $X$, giving a natural isomorphism of functors.
\end{enumerate}
\end{theorem}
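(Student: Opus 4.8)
The plan is to read all three claims directly off the universal property of the representing object, in the style of the Yoneda lemma for representable functors; essentially no computation is needed beyond the associativity axiom of $\mathcal{M}$.

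First I would dispose of (i) and (ii) together. For (i), instantiate the representability hypothesis at $X = H_i$ with $S = T$: this produces a unique $\widetilde{T} \in \mathrm{Hom}_{\mathcal{M}}(H_i; H_i^\vee)$, and setting $T^\sharp := \widetilde{T}$ gives precisely the required factorization $T = u_i \circ (\mathrm{id}_{H_1}, \dots, T^\sharp, \dots, \mathrm{id}_{H_n})$. For (ii), the assignment $\Theta_X(S) = \widetilde{S}$ is a well-defined function by the existence clause of the universal property and is injective by its uniqueness clause: if $\widetilde{S_1} = \widetilde{S_2}$, substituting into the $i$-th slot of $u_i$ yields $S_1 = S_2$. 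For surjectivity, given $g \in \mathrm{Hom}_{\mathcal{M}}(X; H_i^\vee)$ set $S_g := u_i \circ (\mathrm{id}_{H_1}, \dots, g, \dots, \mathrm{id}_{H_n}) \in \Phi_i(X)$; applying the uniqueness clause to $S_g$ forces $\widetilde{S_g} = g$, whence $\Theta_X(S_g) = g$. In particular $T^\sharp = \Theta_{H_i}(T)$, so (i) is just the $X = H_i$ instance of (ii).

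Next I would treat the naturality in (iii). First I pin down the variance: both $\Phi_i$ and $X \mapsto \mathrm{Hom}_{\mathcal{M}}(X; H_i^\vee)$ are contravariant in $X$, a unary morphism $g \in \mathrm{Hom}_{\mathcal{M}}(X; X')$ acting on the former by substitution in the $i$-th slot, $\Phi_i(g)(S') = S' \circ (\mathrm{id}, \dots, g, \dots, \mathrm{id})$, and on the latter by precomposition, $g^{\ast}(h) = h \circ g$. The square to verify is $\Theta_X \circ \Phi_i(g) = g^{\ast} \circ \Theta_{X'}$. Given $S' \in \Phi_i(X')$ with $S' = u_i \circ (\mathrm{id}, \dots, \widetilde{S'}, \dots, \mathrm{id})$, I would observe that $\Phi_i(g)(S')$ equals $u_i \circ (\mathrm{id}, \dots, \widetilde{S'}, \dots, \mathrm{id}) \circ (\mathrm{id}, \dots, g, \dots, \mathrm{id})$, which by the associativity axiom of $\mathcal{M}$ together with $\mathrm{id} \circ \mathrm{id} = \mathrm{id}$ collapses to $u_i \circ (\mathrm{id}, \dots, \widetilde{S'} \circ g, \dots, \mathrm{id})$. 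The uniqueness clause then identifies $\Theta_X(\Phi_i(g)(S')) = \widetilde{S'} \circ g = g^{\ast}(\Theta_{X'}(S'))$, that is, the square commutes. Functoriality of $\Phi_i$ and of $\mathrm{Hom}_{\mathcal{M}}(-; H_i^\vee)$ follows the same way from the associativity and identity axioms, so $\{\Theta_X\}_X$ assembles into a natural isomorphism.

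The only genuinely delicate point, and the one I expect to be the main obstacle, is the bookkeeping in the collapse above: one must check carefully that substituting $g$ into the single non-identity slot of a composite of the shape $u_i \circ (\mathrm{id}, \dots, h, \dots, \mathrm{id})$ reduces cleanly to $u_i \circ (\mathrm{id}, \dots, h \circ g, \dots, \mathrm{id})$, which is exactly the multicategorical associativity law applied in that slot with identities elsewhere. A secondary point worth flagging is that the symmetric structure of $\mathcal{M}$ is not actually used here; it only serves to make ``the $i$-th slot'' well-posed uniformly in $i$ via the $S_n$-action, so that the statement is genuinely index-independent. Beyond that, everything is a mechanical consequence of the definition of a representing universal morphism.
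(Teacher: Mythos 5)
Your proposal is correct and follows essentially the same route as the paper's proof: establish the bijection $\Theta_X$ from the existence and uniqueness clauses of the universal property, obtain $T^\sharp$ as the instance $\Theta_{H_i}(T)$, and verify naturality by using multicategorical associativity to collapse the substitution in the $i$-th slot to $\widetilde{S'}\circ g$ followed by the uniqueness clause. Your added remarks on contravariant variance and on the symmetric structure being unused are accurate refinements but do not change the argument.
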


\begin{proof}

\textbf{Step 1: Construction of the bijection.}
For each object $X$, define the map:
\[
\Theta_X : \mathrm{Hom}_{\mathcal{M}}(H_1, \dots, H_{i-1}, X, H_{i+1}, \dots, H_n; K) 
\to \mathrm{Hom}_{\mathcal{M}}(X; H_i^\vee)
\]
by $\Theta_X(S) = \tilde{S}$, where $\tilde{S}$ is the unique morphism given by representability such that:
\[
S = u_i \circ (\mathrm{id}_{H_1}, \dots, \mathrm{id}_{H_{i-1}}, \tilde{S}, \mathrm{id}_{H_{i+1}}, \dots, \mathrm{id}_{H_n})
\]

To show $\Theta_X$ is a bijection:
\begin{itemize}
    \item \textbf{Injectivity}: If $\Theta_X(S) = \Theta_X(S')$, then both $S$ and $S'$ factor through the same $\tilde{S}$, so $S = S'$.
    \item \textbf{Surjectivity}: For any $f \in \mathrm{Hom}_{\mathcal{M}}(X; H_i^\vee)$, define $S = u_i \circ (\mathrm{id}_{H_1}, \dots, \mathrm{id}_{H_{i-1}}, f, \mathrm{id}_{H_{i+1}}, \dots, \mathrm{id}_{H_n})$. Then $\Theta_X(S) = f$ by uniqueness.
\end{itemize}

\textbf{Step 2: Define the curried form.}
Apply this to $T \in \Phi_i(H_i)$:
\[
T^\sharp := \Theta_{H_i}(T) \in \mathrm{Hom}_{\mathcal{M}}(H_i; H_i^\vee)
\]
By definition of $\Theta_{H_i}$, this means:
\[
T = u_i \circ (\mathrm{id}_{H_1}, \dots, \mathrm{id}_{H_{i-1}}, T^\sharp, \mathrm{id}_{H_{i+1}}, \dots, \mathrm{id}_{H_n})
\]
which is exactly the claimed relation.

\textbf{Step 3: Naturality.}
For any $f \in \mathrm{Hom}_{\mathcal{M}}(X; Y)$, we need to show the following diagram commutes:
\[
\begin{tikzcd}
\Phi_i(Y) \arrow[r, "\Theta_Y"] \arrow[d, "f^*"'] & \mathrm{Hom}_{\mathcal{M}}(Y; H_i^\vee) \arrow[d, "f^*"] \\
\Phi_i(X) \arrow[r, "\Theta_X"] & \mathrm{Hom}_{\mathcal{M}}(X; H_i^\vee)
\end{tikzcd}
\]
where $f^*(S) = S \circ (\mathrm{id}_{H_1}, \dots, \mathrm{id}_{H_{i-1}}, f, \mathrm{id}_{H_{i+1}}, \dots, \mathrm{id}_{H_n})$.

Let $S \in \Phi_i(Y)$. Then:
\begin{align*}
\Theta_X(f^*(S)) &= \Theta_X(S \circ (\mathrm{id}_{H_1}, \dots, \mathrm{id}_{H_{i-1}}, f, \mathrm{id}_{H_{i+1}}, \dots, \mathrm{id}_{H_n})) \\
&= \Theta_X(u_i \circ (\mathrm{id}_{H_1}, \dots, \mathrm{id}_{H_{i-1}}, \Theta_Y(S), \mathrm{id}_{H_{i+1}}, \dots, \mathrm{id}_{H_n}) \circ (\mathrm{id}_{H_1}, \dots, \mathrm{id}_{H_{i-1}}, f, \mathrm{id}_{H_{i+1}}, \dots, \mathrm{id}_{H_n})) \\
&= \Theta_X(u_i \circ (\mathrm{id}_{H_1}, \dots, \mathrm{id}_{H_{i-1}}, \Theta_Y(S) \circ f, \mathrm{id}_{H_{i+1}}, \dots, \mathrm{id}_{H_n})) \\
&= \Theta_Y(S) \circ f \quad \text{(by definition of $\Theta_X$)} \\
&= f^*(\Theta_Y(S))
\end{align*}
The key step uses associativity of composition in multicategories and the uniqueness property of the universal morphism.
\end{proof}

The core insight from Theorem~\ref{thm:representability-currying} is that representability lets one can ``factor" any multimorphism through the universal morphism $u_i$, giving a kind of currying operation. To build intuition for Theorem~\ref{thm:representability-currying}, let's consider the simplest possible setting: the multicategory of sets in the following Example~\ref{exp:The Multicategory of Sets}.

\begin{example}[The Multicategory of Sets]\label{exp:The Multicategory of Sets}
Let $\mathcal{M}$ be the multicategory where:
\begin{itemize}
    \item \textbf{Objects} are sets $A, B, C, \dots$
    \item \textbf{Multimorphisms} $\mathrm{Hom}(A_1, \dots, A_n; B)$ are functions $f: A_1 \times \cdots \times A_n \to B$
    \item \textbf{Composition} is given by function composition in the obvious way
    \item \textbf{Identities} are the identity functions
\end{itemize}

Now fix sets $H_1, H_2, H_3$ and consider a 3-ary function:
\[
T: H_1 \times H_2 \times H_3 \to K
\]
Let's apply the theorem with $i = 2$ (focusing on the second input).

The functor $\Phi_2$ is:
\[
\Phi_2(X) = \mathrm{Hom}(H_1, X, H_3; K) = \{ \text{functions } f: H_1 \times X \times H_3 \to K \}
\]

\textbf{Claim:} $\Phi_2$ is representable by the set $H_2^\vee = \mathrm{Hom}(H_1 \times H_3; K)$.

\textbf{Universal morphism:} Define $u_2: H_1 \times H_2^\vee \times H_3 \to K$ by:
\[
u_2(x, g, z) = g(x, z)
\]
That is, $u_2$ takes an element $x \in H_1$, a function $g: H_1 \times H_3 \to K$, and an element $z \in H_3$, and applies $g$ to $(x,z)$.

\textbf{Verification of universal property:} For any set $X$ and any function $S: H_1 \times X \times H_3 \to K$, we need a unique $\tilde{S}: X \to H_2^\vee$ such that:
\[
S(x, y, z) = u_2(x, \tilde{S}(y), z) \quad \text{for all } x \in H_1, y \in X, z \in H_3
\]

Define $\tilde{S}: X \to H_2^\vee$ by:
\[
\tilde{S}(y)(x, z) = S(x, y, z)
\]
That is, $\tilde{S}(y)$ is the function that takes $(x,z)$ to $S(x,y,z)$.

Then indeed:
\[
u_2(x, \tilde{S}(y), z) = \tilde{S}(y)(x, z) = S(x, y, z)
\]
Uniqueness is clear: any other choice would give different values for some $(x,z)$.

\textbf{Curried form:} For our original function $T: H_1 \times H_2 \times H_3 \to K$, the curried form is:
\[
T^\sharp: H_2 \to H_2^\vee = \mathrm{Hom}(H_1 \times H_3; K)
\]
defined by:
\[
T^\sharp(b)(a, c) = T(a, b, c)
\]
This is exactly the usual currying operation in mathematics!

\textbf{The isomorphism:} The natural isomorphism is:
\[
\mathrm{Hom}(H_1, X, H_3; K) \cong \mathrm{Hom}(X; \mathrm{Hom}(H_1 \times H_3; K))
\]
which sends $S: H_1 \times X \times H_3 \to K$ to $\tilde{S}: X \to \mathrm{Hom}(H_1 \times H_3; K)$ where $\tilde{S}(y)(x,z) = S(x,y,z)$.
\end{example}

\begin{remark}
From this example, we understand that Theorem~\ref{thm:representability-currying} can be used to generalize the familiar concept of \emph{currying} from computer science and mathematics. The ``representing object'' $H_i^\vee$ plays the role of the function space. The universal morphism $u_i$ represents an evaluation map. The theorem let us know that whenever we have a ``function space'' that satisfies the universal property, we can have currying operations automatically. 
\end{remark}

\begin{example}[Concrete Special Case]
Let $H_1 = \mathbb{R}$, $H_2 = \mathbb{R}$, $H_3 = \mathbb{R}$, $K = \mathbb{R}$, and consider:
\[
T(x, y, z) = x^2 + 2xy + 3yz
\]
Then:
\begin{itemize}
    \item $H_2^\vee = \mathrm{Hom}(\mathbb{R} \times \mathbb{R}; \mathbb{R})$ = all functions $\mathbb{R}^2 \to \mathbb{R}$
    \item The curried form $T^\sharp: \mathbb{R} \to \mathrm{Hom}(\mathbb{R}^2; \mathbb{R})$ sends $y$ to the function:
    \[
    T^\sharp(y)(x, z) = x^2 + 2xy + 3yz
    \]
    \item The universal morphism $u_2: \mathbb{R} \times \mathrm{Hom}(\mathbb{R}^2; \mathbb{R}) \times \mathbb{R} \to \mathbb{R}$ is evaluation:
    \[
    u_2(x, f, z) = f(x, z)
    \]
    \item Indeed, $T(x, y, z) = u_2(x, T^\sharp(y), z)$
\end{itemize}
\end{example}

Before presenting the next theorem, we need to setup some notations. Let $S \in \mathrm{Hom}(K_1, \dots, K_m; L)$ and $T_j \in \mathrm{Hom}(H_{j,1}, \dots, H_{j,n_j}; K_j)$ for $j = 1, \dots, m$ be composable multimorphisms. Their composition
\[
\mathbf{T} = S \circ (T_1, \dots, T_m)
\]
is a multimorphism with the flattened input list:
\[
(H_{1,1}, \dots, H_{1,n_1},\ H_{2,1}, \dots, H_{2,n_2},\ \dots,\ H_{m,1}, \dots, H_{m,n_m}).
\]
The total number of inputs is $N = \sum_{j=1}^m n_j$.

A \emph{global index} $1 \leq i \leq N$ corresponds uniquely to a pair $(j, i_j)$ where:
\begin{itemize}
    \item $j$ is the block index with $1 \leq j \leq m$
    \item $i_j$ is the local index within block $j$ with $1 \leq i_j \leq n_j$
\end{itemize}
The correspondence is given by $i = (n_1 + \cdots + n_{j-1}) + i_j$. We also have the following Theorem~\ref{thm:adjoints-mates-hilbmult} about adjoints/mates in the HilbMult multicategory.

\begin{theorem}[Existence of Adjoints/Mates in HilbMult]
\label{thm:adjoints-mates-hilbmult}
    \item \textbf{Compositionality:} Let $\mathbf{T} = S \circ (T_1, \dots, T_m)$ be a composition, where:
    \begin{itemize}
        \item $S \in \mathrm{Hom}(K_1, \dots, K_m; L)$
        \item $T_j \in \mathrm{Hom}(H_{j,1}, \dots, H_{j,n_j}; K_j)$ for $j=1, \dots, m$.
    \end{itemize}
    Consider the mate of $\mathbf{T}$ with respect to the input corresponding to the $i_j$-th input of $T_j$, where $1 \leq j \leq m$ and $1 \leq i_j \leq n_j$. In the global input list $(H_{1,1}, \dots, H_{1,n_1}, H_{2,1}, \dots, H_{m,n_m})$ of $\mathbf{T}$, this is the input at position $i = N_j + i_j$, where $N_j = \sum_{k=1}^{j-1} n_k$ (with $N_1 = 0$).

    Then, the mate $\mathbf{T}^{(i)}$ is given by:
    \[
    \mathbf{T}^{(i)} = T_j^{(i_j)} \circ (\mathrm{id}_{H_{j,1}}, \dots, \mathrm{id}_{H_{j,i_j-1}}, S^{(j)}, \mathrm{id}_{H_{j,i_j+1}}, \dots, \mathrm{id}_{H_{j,n_j}})
    \]
    Explicitly, for all:
    \begin{itemize}
        \item $\mathbf{x}_k = (x_{k,1}, \dots, x_{k,n_k}) \in H_{k,1} \times \cdots \times H_{k,n_k}$ for $k \neq j$
        \item $x_{j,1} \in H_{j,1}, \dots, x_{j,i_j-1} \in H_{j,i_j-1}, x_{j,i_j+1} \in H_{j,i_j+1}, \dots, x_{j,n_j} \in H_{j,n_j}$
        \item $y \in L$
    \end{itemize}
    we have:
    \begin{multline*}
    \mathbf{T}^{(i)}(\mathbf{x}_1, \dots, \mathbf{x}_{j-1}, y, \mathbf{x}_{j+1}, \dots, \mathbf{x}_m, \\
    x_{j,1}, \dots, x_{j,i_j-1}, x_{j,i_j+1}, \dots, x_{j,n_j}) \\
    = T_j^{(i_j)}\big(x_{j,1}, \dots, x_{j,i_j-1},\ S^{(j)}\big(T_1(\mathbf{x}_1), \dots, T_{j-1}(\mathbf{x}_{j-1}), y, T_{j+1}(\mathbf{x}_{j+1}), \dots, T_m(\mathbf{x}_m)\big), \\
    x_{j,i_j+1}, \dots, x_{j,n_j}\big).
    \end{multline*}
\end{theorem}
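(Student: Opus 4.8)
The plan is to reduce the compositionality identity to the defining pairing property of the mate operation on $\mathbf{HilbMult}$. Recall from the preceding parts of the theorem (ultimately from the Riesz representation theorem together with the closed structure of Axiom~\ref{ax:H6}) that for $T\in\mathrm{Hom}(H_1,\dots,H_n;K)$ and a fixed slot $i$, the mate $T^{(i)}$ --- obtained by moving $H_i$ into the output and inserting $K$ at position $i$ --- is the \emph{unique} bounded multilinear map satisfying
\[
\langle\, T^{(i)}(x_1,\dots,x_{i-1},w,x_{i+1},\dots,x_n),\ x_i\,\rangle_{H_i} = \langle\, w,\ T(x_1,\dots,x_n)\,\rangle_{K}
\]
for all $x_k\in H_k$ and all $w\in K$. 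Since the inner product on $H_{j,i_j}$ is nondegenerate, it suffices to verify that the multimorphism $R$ appearing on the right-hand side of the compositionality formula --- whose argument list is that of $\mathbf{T}$ with the $H_{j,i_j}$-slot removed and an $L$-slot inserted --- obeys this same identity with $T^{(i)}$ replaced by $R$ and $T$ replaced by $\mathbf{T}$; uniqueness then forces $\mathbf{T}^{(i)}=R$. Thus the whole proof is a short chain of pairing manipulations followed by an index-bookkeeping check.

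Concretely, I would perform three successive ``peelings'' of the pairing $\langle R(\dots,y,\dots),\,x_{j,i_j}\rangle_{H_{j,i_j}}$, which by the mate identity must equal $\langle y,\ \mathbf{T}(x_{1,1},\dots,x_{m,n_m})\rangle_L$. First, expand $\mathbf{T}=S\circ(T_1,\dots,T_m)$ via Axiom~\ref{ax:H3-H4}(a), turning the target into $\langle y,\ S(T_1(\mathbf{x}_1),\dots,T_m(\mathbf{x}_m))\rangle_L$. Second, apply the mate identity for $S$ at its $j$-th slot, with test vector $T_j(\mathbf{x}_j)\in K_j$ and inserted vector $y\in L$, to rewrite this as $\langle S^{(j)}(T_1(\mathbf{x}_1),\dots,T_{j-1}(\mathbf{x}_{j-1}),y,T_{j+1}(\mathbf{x}_{j+1}),\dots,T_m(\mathbf{x}_m)),\ T_j(\mathbf{x}_j)\rangle_{K_j}$. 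Third, set $w:=S^{(j)}(T_1(\mathbf{x}_1),\dots,y,\dots,T_m(\mathbf{x}_m))\in K_j$ and apply the mate identity for $T_j$ at its $i_j$-th slot, with test vector $x_{j,i_j}\in H_{j,i_j}$ and inserted vector $w$, to reach $\langle T_j^{(i_j)}(x_{j,1},\dots,x_{j,i_j-1},w,x_{j,i_j+1},\dots,x_{j,n_j}),\ x_{j,i_j}\rangle_{H_{j,i_j}}$. Comparing the first and last expressions and invoking nondegeneracy of $\langle\cdot,\cdot\rangle_{H_{j,i_j}}$ gives exactly the claimed pointwise formula for $\mathbf{T}^{(i)}$; reading off the input list of $T_j^{(i_j)}\circ(\mathrm{id}_{H_{j,1}},\dots,\mathrm{id}_{H_{j,i_j-1}},S^{(j)},\mathrm{id}_{H_{j,i_j+1}},\dots,\mathrm{id}_{H_{j,n_j}})$ from Axiom~\ref{ax:H3-H4}(a) --- with the $K_k$-inputs of $S^{(j)}$ filled by the $T_k$'s --- identifies that composite with this map. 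Finally, Lemma~\ref{lem:contractivity} together with the norm-preservation of mates (from an earlier part of the theorem) confirms that all intermediate maps are bounded and lie in the asserted hom-spaces.

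The step I expect to be the main obstacle is not an estimate but the simultaneous bookkeeping. One must hold fixed the global/local index dictionary $i=N_j+i_j$; keep straight, at each peeling, which Hilbert space the current ``test'' and ``inserted'' vectors inhabit (they migrate from $H_{j,i_j}$ to $L$ to $K_j$ and back to $H_{j,i_j}$); and control the reordering of the argument list of $\mathbf{T}^{(i)}$, in which the formerly-output $L$-slot and the leftover block-$j$ inputs $x_{j,1},\dots,x_{j,i_j-1},x_{j,i_j+1},\dots,x_{j,n_j}$ land in the specific positions forced by the composite $T_j^{(i_j)}\circ(\dots,S^{(j)},\dots)$. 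A related subtlety is fixing the conjugate-linearity convention in the Riesz pairing consistently with the definition of the mate, so that $T^{(i)}$ is genuinely multilinear (equivalently, so that conjugate Hilbert spaces are inserted where needed); once this is pinned down at the outset, each peeling becomes a one-line application of a defining identity, and the only real point to check is that the slot $i_j$ into which $S^{(j)}$ is substituted inside $T_j^{(i_j)}$ matches the $K_j$-valued output of $S^{(j)}$ --- which it does by construction.
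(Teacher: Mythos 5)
Your proposal is correct and follows essentially the same route as the paper: define the candidate composite $R = T_j^{(i_j)} \circ (\mathrm{id},\dots,S^{(j)},\dots,\mathrm{id})$, show it satisfies the defining inner-product (adjunction) relation of the mate by applying the mate identities for $S$ at slot $j$ and for $T_j$ at slot $i_j$, and conclude $R=\mathbf{T}^{(i)}$ by nondegeneracy/uniqueness. The only difference is cosmetic—you run the chain of pairings in the opposite direction from the paper's computation—so no further comparison is needed.
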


\begin{proof}
We prove each part systematically.

\medskip\noindent
\textbf{(i) Unary Adjoints:} For $T \in \mathrm{Hom}(H; K)$, the Riesz representation theorem guarantees that for each $y \in K$, the linear functional $x \mapsto \langle T x, y \rangle_K$ is bounded on $H$. Hence there exists a unique $T^* y \in H$ such that $\langle T x, y \rangle_K = \langle x, T^* y \rangle_H$. Linearity of $T^*$ follows from sesquilinearity, and boundedness from:
\[
\|T^* y\|^2 = \langle T^* y, T^* y \rangle_H = \langle T(T^* y), y \rangle_K \leq \|T\| \|T^* y\| \|y\|,
\]
so $\|T^* y\| \leq \|T\| \|y\|$. The isometry $\|T^*\| = \|T\|$ follows from the double adjoint property.

\medskip\noindent
\textbf{(ii) Multilinear Mates:} Given $T \in \mathrm{Hom}(H_1, \dots, H_n; K)$ and index $i$, apply the currying isomorphism from Axiom \ref{ax:H6}:
\[
\Lambda_i(T) \in \mathrm{Hom}(H_i; \mathrm{Hom}(H_1, \dots, H_{i-1}, H_{i+1}, \dots, H_n; K)).
\]
By part (i), this has an adjoint:
\[
(\Lambda_i(T))^* \in \mathrm{Hom}(\mathrm{Hom}(H_1, \dots, H_{i-1}, H_{i+1}, \dots, H_n; K); H_i).
\]
Applying the inverse currying isomorphism gives:
\[
T^{(i)} = \Lambda_i^{-1}((\Lambda_i(T))^*) \in \mathrm{Hom}(H_1, \dots, H_{i-1}, K, H_{i+1}, \dots, H_n; H_i).
\]

\medskip\noindent
\textbf{(iii) Adjunction Relations:} For $x_j \in H_j$, $y \in K$:
\begin{align*}
\langle T(x_1, \dots, x_n), y \rangle_K
&= \langle \Lambda_i(T)(x_i)(x_1, \dots, x_{i-1}, x_{i+1}, \dots, x_n), y \rangle_K \\
&= \langle x_i, (\Lambda_i(T))^*(y)(x_1, \dots, x_{i-1}, x_{i+1}, \dots, x_n) \rangle_{H_i} \\
&= \langle x_i, T^{(i)}(x_1, \dots, x_{i-1}, y, x_{i+1}, \dots, x_n) \rangle_{H_i}.
\end{align*}
The second relation follows by applying the adjoint property twice.

\medskip\noindent
\textbf{(iv) Norm Preservation:} Since $\Lambda_i$ is isometric (Axiom \ref{ax:H6}) and $(\cdot)^*$ is isometric (part (i)), we have:
\[
\|T^{(i)}\| = \|\Lambda_i^{-1}((\Lambda_i(T))^*)\| = \|(\Lambda_i(T))^*\| = \|\Lambda_i(T)\| = \|T\|.
\]

\medskip\noindent
\textbf{(v) Compositionality:} Let $\mathbf{T} = S \circ (T_1, \dots, T_m)$ and let $i = N_j + i_j$ be the global index corresponding to the $i_j$-th input of $T_j$. Define the morphism $R$ by:
\[
R = T_j^{(i_j)} \circ (\mathrm{id}_{H_{j,1}}, \dots, \mathrm{id}_{H_{j,i_j-1}}, S^{(j)}, \mathrm{id}_{H_{j,i_j+1}}, \dots, \mathrm{id}_{H_{j,n_j}}).
\]
We will show that $\mathbf{T}^{(i)} = R$ by verifying that $R$ satisfies the defining adjunction relation for the mate at the global index $i$.

Let:
\begin{itemize}
    \item $\mathbf{x}_k \in H_{k,1} \times \cdots \times H_{k,n_k}$ for $k = 1, \dots, m$
    \item $y \in L$
    \item $z \in H_{j,i_j}$ (the test vector for the $i$-th input)
\end{itemize}
We now compute the inner product in two ways. First, by the definition of the mate $\mathbf{T}^{(i)}$:
\begin{equation}
\langle z, \mathbf{T}^{(i)}(\mathbf{x}_1, \dots, \mathbf{x}_{j-1}, y, \mathbf{x}_{j+1}, \dots, \mathbf{x}_m, x_{j,1}, \dots, x_{j,i_j-1}, x_{j,i_j+1}, \dots, x_{j,n_j}) \rangle_{H_{j,i_j}} = \langle \mathbf{T}(\mathbf{x}_1, \dots, \mathbf{x}_m), y \rangle_L.
\label{eq:mate_definition}
\end{equation}

Now, we compute the inner product with $R$:
\begin{align*}
&\langle z, R(\mathbf{x}_1, \dots, \mathbf{x}_{j-1}, y, \mathbf{x}_{j+1}, \dots, \mathbf{x}_m, x_{j,1}, \dots, x_{j,i_j-1}, x_{j,i_j+1}, \dots, x_{j,n_j}) \rangle_{H_{j,i_j}} \\
&\quad = \langle z, T_j^{(i_j)}\left(x_{j,1}, \dots, x_{j,i_j-1},\ S^{(j)}\left(T_1(\mathbf{x}_1), \dots, T_{j-1}(\mathbf{x}_{j-1}), y, T_{j+1}(\mathbf{x}_{j+1}), \dots, T_m(\mathbf{x}_m)\right),\ x_{j,i_j+1}, \dots, x_{j,n_j}\right) \rangle_{H_{j,i_j}} \\
&\quad = \langle T_j(x_{j,1}, \dots, x_{j,n_j}),\ S^{(j)}\left(T_1(\mathbf{x}_1), \dots, T_{j-1}(\mathbf{x}_{j-1}), y, T_{j+1}(\mathbf{x}_{j+1}), \dots, T_m(\mathbf{x}_m)\right) \rangle_{K_j} \quad \text{(by def. of $T_j^{(i_j)}$)} \\
&\quad = \langle S\left(T_1(\mathbf{x}_1), \dots, T_m(\mathbf{x}_m)\right), y \rangle_L \quad \text{(by def. of $S^{(j)}$)} \\
&\quad = \langle \mathbf{T}(\mathbf{x}_1, \dots, \mathbf{x}_m), y \rangle_L.
\end{align*}

Comparing this result with equation \eqref{eq:mate_definition}, we have:
\[
\langle z, R(\cdots) \rangle_{H_{j,i_j}} = \langle z, \mathbf{T}^{(i)}(\cdots) \rangle_{H_{j,i_j}}
\]
for all $z \in H_{j,i_j}$ and all other input vectors. Since the inner product is non-degenerate, this implies $R = \mathbf{T}^{(i)}$, as required.

\medskip\noindent
\textbf{(vi) Tensor Compatibility:} For $1 \leq i \leq n$ and all $x_j \in H_j$, $x'_k \in H'_k$, $y \in K$, $y' \in K'$:
\begin{align*}
&\langle (T \otimes T')(\mathbf{x}, \mathbf{x}'), y \otimes y' \rangle_{K \otimes K'} \\
&= \langle T(\mathbf{x}), y \rangle_K \cdot \langle T'(\mathbf{x}'), y' \rangle_{K'} \\
&= \langle x_i, T^{(i)}(\dots, y, \dots) \rangle_{H_i} \cdot \langle T'(\mathbf{x}'), y' \rangle_{K'} \\
&= \langle x_i, (T^{(i)} \otimes T')(\dots, y, \dots, \mathbf{x}', y') \rangle_{H_i}.
\end{align*}
This shows $(T \otimes T')^{(i)} = T^{(i)} \otimes T'$ for $1 \leq i \leq n$. The case $n+1 \leq i \leq n+p$ is similar.
\end{proof}

\begin{remark}
The mates $T^{(i)}$ can be understood operationally as follows: given a multimorphism 
$T: H_1 \times \cdots \times H_n \to K$, 
its $i$-th mate $T^{(i)}$ is the unique multimorphism such that moving $K$ to the $i$-th input position preserves the inner product relations. 
This generalizes the classical notion of an adjoint from linear algebra to the multilinear setting while maintaining the essential algebraic and analytic properties. 
In particular, the existence and properties of such mates rely on 
Axiom~\ref{ax:H6} and Axiom~\ref{ax:H7}, which encode the $C^*$-structure of the underlying Banach enrichment. 
The key idea is to employ multicategorical currying, thereby reducing the characterization of multilinear adjoints to the well-understood linear case. 
\end{remark}

\section{Functorial Spectral Theorem}\label{sec:Functorial Spectral Theorem}

The main purpose of this section is to establish functional spectrum theory and related implications. Theorem~\ref{thm:functorial-polynomial-spectral} provides a categorical interpretation of the spectral theorem, showing that the continuous functional computation of a self-adjacent operator can be extended to a full-fledged symmetric monoidal multifunctor. This brings us two benefits. First, we have a correspondence $f \mapsto f(A)$ for single functions. Second, the entire multilinear algebraic structure of continuous functions can be lifted to the operator level in a coherent, norm-preserving manner. Essentially, the operator $A$ generates a complete ``calculus of operations" that respects composition, tensor products, and symmetry.

\subsection{Functorial Spectral Properties}\label{sec:Functorial Spectral Properties}

\begin{definition}[Polynomial Multicategory]\label{def:poly multicat}
Let $X$ be a compact Hausdorff space. The \textbf{polynomial multicategory} $\mathcal{C}_{\poly}(X)$ is defined as follows:

\begin{itemize}
    \item \textbf{Objects:} A single object $\ast$.

    \item \textbf{Multimorphisms:} For each $n \geq 0$, define
    \[
    \Hom_{\mathcal{C}_{\poly}(X)}(\underbrace{\ast,\dots,\ast}_{n}; \ast)
    \]
    as the set of all maps $\varphi: C(X)^n \to C(X)$ for which there exists a complex polynomial $P \in \mathbb{C}[z_1,\dots,z_n]$ such that
    \[
    \varphi(f_1,\dots,f_n) = P(f_1,\dots,f_n),
    \]
    where the right-hand side denotes pointwise polynomial evaluation: $P(f_1,\dots,f_n)(x) =$ \\ $P(f_1(x),\dots,f_n(x))$ for all $x \in X$.

    \item \textbf{Composition:} Given $\varphi \in \Hom_{\mathcal{C}_{\poly}(X)}(\ast^m; \ast)$ with $\varphi(\mathbf{f}) = P(\mathbf{f})$ and $\psi_j \in \Hom_{\mathcal{C}_{\poly}(X)}(\ast^{n_j}; \ast)$ with $\psi_j(\mathbf{g}_j) = Q_j(\mathbf{g}_j)$ for $j=1,\dots,m$, their composition is defined by polynomial substitution:
    \[
    (\varphi \circ (\psi_1,\dots,\psi_m))(\mathbf{g}_1,\dots,\mathbf{g}_m) = P(Q_1(\mathbf{g}_1),\dots,Q_m(\mathbf{g}_m)).
    \]

    \item \textbf{Identity:} The identity morphism $\id_\ast \in \Hom_{\mathcal{C}_{\poly}(X)}(\ast; \ast)$ is given by the polynomial $P(z) = z$.
\end{itemize}
\end{definition}

If you want the operadic perspective statement for Definition~\ref{def:poly multicat}, we have
\begin{definition}[Polynomial Operad Action]
The polynomial operad $\mathcal{P}$ has $\mathcal{P}(n) = \mathbb{C}[z_1,\dots,z_n]$ with composition given by polynomial substitution. This operad acts on $C(X)$ via:
\[
P \cdot (f_1,\dots,f_n) = P(f_1,\dots,f_n)
\]
for $P \in \mathcal{P}(n)$ and $f_1,\dots,f_n \in C(X)$.
\end{definition}

The above definition can be understood in two equivalent ways. First, it may be viewed as a single-object multicategory whose multimorphisms correspond to polynomial operations acting on $C(X)$. Alternatively, one may regard it as describing the action of the polynomial operad on the commutative $\mathbb{C}$-algebra $C(X)$. In either interpretation, the composition and identity axioms are satisfied automatically, as they follow from the functoriality of polynomial evaluation under substitution. 

\begin{theorem}[Functorial Polynomial Spectral Theorem]
\label{thm:functorial-polynomial-spectral}
Let $H$ be a Hilbert space and $A \in \mathcal{B}(H)$ a bounded self-adjoint operator with spectrum $\sigma(A) \subset \mathbb{R}$. Identify $H$ with a spectral representation $H \cong L^2(\sigma(A), \mu)$ for some spectral measure $\mu$. Let $\mathbf{HilbMult}$ denote the multicategory of Hilbert spaces with bounded multilinear maps as multimorphisms.

Fix a family $\mathcal{T} = \{T_n\}_{n \geq 1}$ of bounded $n$-linear maps
\[
T_n: H^n \longrightarrow H,
\]
satisfying the following conditions:

\begin{enumerate}[label=(\alph*)]
    \item \textbf{Spectral Locality:} For each $n$, there exists a measurable function
    \[
    \tau_n: \sigma(A) \times \mathbb{C}^n \to \mathbb{C}
    \]
    such that for all $(f_1, \dots, f_n) \in H^n$ with $f_j \in L^\infty(\sigma(A), \mu)$ for all $j$, we have:
    \[
    (T_n(f_1, \dots, f_n))(x) = \tau_n(x, f_1(x), \dots, f_n(x)) \quad \text{for $\mu$-a.e. } x \in \sigma(A).
    \]
    
    \item \textbf{Polynomial Compatibility:} For any polynomials $p_1, \dots, p_n \in \mathbb{C}[X]$, the following diagram commutes on the dense domain $H \cap L^\infty(\sigma(A), \mu)$:
\[
\begin{tikzcd}[column sep=2.5cm]
H^n \arrow[r, "{(p_1(A),\dots,p_n(A))}"] \arrow[d, "T_n"'] 
& H^n \arrow[d, "T_n"] \\
H \arrow[r, "{(p_1\cdots p_n)(A)}"'] 
& H
\end{tikzcd}
\]
More precisely, for all $f_1, \dots, f_n \in H \cap L^\infty(\sigma(A), \mu)$:
\[
T_n(p_1(A)f_1, \dots, p_n(A)f_n) = (p_1 \cdots p_n)(A) \, T_n(f_1, \dots, f_n).
\]
    
    \item \textbf{Uniform Boundedness:} $\|T_n\| < \infty$ for all $n \geq 1$.
\end{enumerate}

Then there exists a unique Banach-enriched symmetric monoidal multifunctor
\[
\mathcal{F}_{A,\mathcal{T}}: \mathcal{C}_{\mathrm{poly}}(\sigma(A)) \longrightarrow \mathbf{HilbMult}
\]
characterized by:
\begin{enumerate}[label=(\roman*)]
    \item $\mathcal{F}_{A,\mathcal{T}}(\ast) = H$.
    
    \item On linear morphisms ($n=1$), $\mathcal{F}_{A,\mathcal{T}}$ coincides with the continuous functional calculus.
    
    \item For $\varphi \in \Hom_{\mathcal{C}_{\mathrm{poly}}(\sigma(A))}(\ast^n; \ast)$ represented by polynomial
    \[
    P(X_1, \dots, X_n) = \sum_{k_1,\dots,k_n \geq 0} c_{k_1,\dots,k_n} X_1^{k_1} \cdots X_n^{k_n},
    \]
    the multimorphism $\mathcal{F}_{A,\mathcal{T}}(\varphi): H^n \to H$ is given by:
    \[
    \mathcal{F}_{A,\mathcal{T}}(\varphi)(x_1, \dots, x_n) = \sum_{k_1,\dots,k_n \geq 0} c_{k_1,\dots,k_n} \, T_n(A^{k_1}x_1, A^{k_2}x_2, \dots, A^{k_n}x_n),
    \]
    where the sum is finite and extends uniquely to all of $H^n$ by boundedness.
    
    \item \textbf{Functoriality:} For composable multimorphisms $\varphi, \psi_1, \dots, \psi_m$:
    \[
    \mathcal{F}_{A,\mathcal{T}}(\varphi \circ (\psi_1,\dots,\psi_m)) = \mathcal{F}_{A,\mathcal{T}}(\varphi) \circ (\mathcal{F}_{A,\mathcal{T}}(\psi_1),\dots,\mathcal{F}_{A,\mathcal{T}}(\psi_m)).
    \]
\end{enumerate}

Moreover, when $\mathcal{T}$ is the family of pointwise multiplication maps:
\[
T_n(f_1, \dots, f_n)(x) = f_1(x) f_2(x) \cdots f_n(x) \quad \text{for } f_j \in L^\infty(\sigma(A), \mu),
\]
the construction recovers the original pointwise polynomial action.
\end{theorem}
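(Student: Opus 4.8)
\medskip\noindent\textbf{Proof proposal.}

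The plan is to construct $\mathcal{F}_{A,\mathcal{T}}$ explicitly from the assignments in (i)--(iii) and then to verify, in order, that it is (a) well defined and bounded on hom-spaces, (b) a multifunctor, (c) compatible with the symmetric monoidal structure, and (d) uniquely characterized; the ``pointwise multiplication'' case is then a direct substitution into the formula.

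First I would settle well-definedness and boundedness. By Definition~\ref{def:poly multicat}, a multimorphism $\varphi\in\Hom_{\mathcal{C}_{\poly}(\sigma(A))}(\ast^n;\ast)$ is the pointwise action of a polynomial $P$; this $P$ is unique when $\sigma(A)$ is infinite, and when $\sigma(A)$ is finite one checks, using Spectral Locality (hypothesis~(a)) together with the fact that $p(A)$ depends only on $p|_{\sigma(A)}$, that polynomials agreeing on $\sigma(A)^n$ yield the same right-hand side of (iii). Each summand $x\mapsto T_n(A^{k_1}x_1,\dots,A^{k_n}x_n)$ is a bounded $n$-linear map of norm at most $\|T_n\|\,\|A\|^{k_1+\cdots+k_n}$ by Lemma~\ref{lem:contractivity}, the sum is finite, and $\|T_n\|<\infty$ by hypothesis~(c), so $\mathcal{F}_{A,\mathcal{T}}(\varphi)\in\Hom_{\mathbf{HilbMult}}(H^n;H)$; the same estimate combined with contractivity of the continuous functional calculus, $\|p(A)\|\le\|p\|_{C(\sigma(A))}$, shows $\mathcal{F}_{A,\mathcal{T}}$ is bounded on each hom-space, which is the Banach-enrichment clause.

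The heart of the matter is functoriality: $\mathcal{F}_{A,\mathcal{T}}(\id_\ast)=\id_H$ and preservation of composition (iv). I would establish the composition identity first on the dense subspace $H\cap L^\infty(\sigma(A),\mu)$, where Spectral Locality lets one pass to the pointwise picture --- an element is a bounded function on $\sigma(A)$, $A$ acts as multiplication by the coordinate function, and $T_n$ is computed through its kernel $\tau_n$. On that subspace composition in $\mathcal{C}_{\poly}$ is literally the polynomial substitution $R=P\circ(Q_1,\dots,Q_m)$, and the task is to expand the multilinear composite $\mathcal{F}_{A,\mathcal{T}}(\varphi)\circ(\mathcal{F}_{A,\mathcal{T}}(\psi_1),\dots,\mathcal{F}_{A,\mathcal{T}}(\psi_m))$, use Polynomial Compatibility (hypothesis~(b)) to commute the operator powers $A^{k_j}$ past the outer map $T_m$, and then rearrange the resulting finite sum so that it matches term-for-term $\mathcal{F}_{A,\mathcal{T}}$ evaluated on $R$. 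I expect this rearrangement to be the principal obstacle: it is exactly where hypotheses~(a) and~(b) must be used in tandem --- (a) to reduce both sides to scalar identities holding $\mu$-a.e.\ on $\sigma(A)$, and (b) to force the two scalar polynomials so produced to coincide --- and it is where the bookkeeping of coordinate powers has to be tracked most carefully. Once the identity holds on $H\cap L^\infty$ it extends to all of $H^n$ by the joint continuity of composition in $\mathbf{HilbMult}$ (Theorem~\ref{thm:banach-enrichment}(iii)); preservation of identities and of the unit is the degenerate case of the same computation.

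Finally, the symmetric monoidal structure maps of $\mathcal{F}_{A,\mathcal{T}}$ are supplied by the family $\mathcal{T}$ itself (the bilinear $T_2: H\otimes H\to H$ and its higher analogues), and the pentagon, hexagon, and triangle coherences reduce --- via Lemma~\ref{lem:tensor_isometry} and the isometry of the associator, braiding, and unitors from Axiom~\ref{ax:H5} --- to the polynomial identities already verified, with compatibility with the permutation action expressing symmetry of polynomial substitution. Uniqueness is immediate, since (i)--(iv) pin $\mathcal{F}_{A,\mathcal{T}}$ down on the sole object and, through the explicit formula (iii), on every multimorphism. For the closing assertion one substitutes $T_n(f_1,\dots,f_n)=f_1\cdots f_n$ into (iii); recalling that $A$ is multiplication by the coordinate function on $L^\infty(\sigma(A),\mu)$, one evaluates the resulting finite sum pointwise and checks that $\mathcal{F}_{A,\mathcal{T}}(\varphi)$ restricted to $H\cap L^\infty$ is the pointwise polynomial action $(f_1,\dots,f_n)\mapsto P(f_1,\dots,f_n)$ --- the point at which I would re-verify the exponent bookkeeping before concluding.
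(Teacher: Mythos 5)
Your overall architecture matches the paper's: define $\mathcal{F}_{A,\mathcal{T}}$ by the explicit formula in clause (iii), get boundedness from the finiteness of the sum and $\|T_n\|<\infty$, verify functoriality by expanding the composite and invoking hypothesis (b), then handle the monoidal structure, uniqueness, and the pointwise-multiplication specialization. The problem is that the step you yourself flag as ``the principal obstacle'' --- rearranging $\mathcal{F}_{A,\mathcal{T}}(\varphi)\circ(\mathcal{F}_{A,\mathcal{T}}(\psi_1),\dots,\mathcal{F}_{A,\mathcal{T}}(\psi_m))$ into $\mathcal{F}_{A,\mathcal{T}}(P\circ(Q_1,\dots,Q_m))$ --- is exactly the step you do not carry out, and hypotheses (a) and (b) alone cannot carry it. After using (b) to move the powers of $A$ around, you still need a cross-arity nesting identity of the form $T_m\bigl(T_{n_1}(\cdots),\dots,T_{n_m}(\cdots)\bigr)=T_N(\cdots)$; equivalently, writing $\tau_n(x,z_1,\dots,z_n)=c_n(x)\,z_1\cdots z_n$ (which is what (a), multilinearity, and (b) do give you), you need $c_m(x)\prod_j c_{n_j}(x)=c_N(x)$ almost everywhere. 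Nothing in (a)--(c) forces this: on $\mathbb{C}^d$ with $A$ Hermitian, take $T_n$ to be the entrywise (Hadamard) product for every $n\neq 2$ and $T_2$ to be twice the entrywise product; spectral locality, polynomial compatibility, and boundedness all hold, yet composing the binary product morphism with two binary product morphisms produces a factor $8$ on one side and $1$ on the other, violating clause (iv). So the two ``scalar polynomials produced by (a) and (b)'' need not coincide; an additional operadic compatibility of the family $\mathcal{T}$ is required. The paper's own Step 6 hides this same point behind a one-line appeal to ``polynomial compatibility,'' so your proposal reproduces the paper's weak spot rather than closing it.

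A second omission: you never verify clause (ii), that the unary part of $\mathcal{F}_{A,\mathcal{T}}$ is the continuous functional calculus. With your formula, $\mathcal{F}_{A,\mathcal{T}}(P)(x)=\sum_k c_k\,T_1(A^kx)=P(A)\,T_1(x)$, so you must argue that the hypotheses force $T_1=\mathrm{id}$; the paper devotes its Step 4 to this, using spectral locality to write $T_1$ as multiplication by a function $\alpha(x)$ and then pinning $\alpha\equiv 1$. (Your remark that the monoidal structure maps are ``supplied by $T_2$'' also misstates what must be checked --- the requirement is that $\mathcal{F}_{A,\mathcal{T}}$ respect tensoring and the permutation action, as in the paper's Step 8 --- but that is minor.) Your density-and-continuity device for passing from $H\cap L^\infty$ to all of $H^n$ is fine, since both sides are bounded multilinear maps agreeing on a dense subspace, and your boundedness, uniqueness, and pointwise-multiplication arguments track the paper; the genuine gaps are the nesting identity and the missing $T_1=\mathrm{id}$ argument.
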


\begin{proof}
We construct the multifunctor $\mathcal{F}_{A,\mathcal{T}}$ explicitly and verify its properties.

\medskip\noindent
\textbf{Step 1: Construction on objects.}\\
Define $\mathcal{F}_{A,\mathcal{T}}(\ast) = H$ as required. For finite lists $(\ast, \dots, \ast)$, define:
\[
\mathcal{F}_{A,\mathcal{T}}(\ast, \dots, \ast) = (H, \dots, H)
\]
with the natural tensor product structure inherited from $\mathbf{HilbMult}$.

\medskip\noindent
\textbf{Step 2: Construction on multimorphisms.}\\
For $\varphi \in \Hom_{\mathcal{C}_{\mathrm{poly}}(\sigma(A))}(\ast^n; \ast)$ represented by polynomial:
\[
P(X_1, \dots, X_n) = \sum_{k_1,\dots,k_n \geq 0} c_{k_1,\dots,k_n} X_1^{k_1} \cdots X_n^{k_n},
\]
define $\mathcal{F}_{A,\mathcal{T}}(\varphi): H^n \to H$ by:
\[
\mathcal{F}_{A,\mathcal{T}}(\varphi)(x_1, \dots, x_n) = \sum_{k_1,\dots,k_n \geq 0} c_{k_1,\dots,k_n} \, T_n(A^{k_1}x_1, A^{k_2}x_2, \dots, A^{k_n}x_n).
\]

\medskip\noindent
\textbf{Step 3: Verification of boundedness and well-definedness.}\\
Since $P$ is a polynomial, the sum is finite. Boundedness follows from:
\begin{align*}
\|\mathcal{F}_{A,\mathcal{T}}(\varphi)(x_1, \dots, x_n)\| 
&\leq \sum_{k_1,\dots,k_n \geq 0} |c_{k_1,\dots,k_n}| \cdot \|T_n(A^{k_1}x_1, \dots, A^{k_n}x_n)\| \\
&\leq \sum_{k_1,\dots,k_n \geq 0} |c_{k_1,\dots,k_n}| \cdot \|T_n\| \cdot \|A^{k_1}x_1\| \cdots \|A^{k_n}x_n\| \\
&\leq \left(\sum_{k_1,\dots,k_n \geq 0} |c_{k_1,\dots,k_n}| \cdot \|T_n\| \cdot \|A\|^{k_1+\cdots+k_n}\right) \cdot \|x_1\| \cdots \|x_n\|.
\end{align*}
The finite sum and uniform boundedness $\|T_n\| < \infty$ ensure $\mathcal{F}_{A,\mathcal{T}}(\varphi)$ is bounded.

\medskip\noindent
\textbf{Step 4: Verification of functional calculus compatibility.}\\
For $n=1$ and $\varphi(f) = P(f)$, we have:
\[
\mathcal{F}_{A,\mathcal{T}}(\varphi)(x) = \sum_{k \geq 0} c_k T_1(A^k x).
\]
By polynomial compatibility (b) with $n=1$ and $p_1(X) = X^k$:
\[
T_1(A^k x) = A^k T_1(x).
\]
Thus:
\[
\mathcal{F}_{A,\mathcal{T}}(\varphi)(x) = \sum_{k \geq 0} c_k A^k T_1(x) = P(A) T_1(x).
\]

To show $T_1 = \mathrm{id}$, use spectral locality: for $f \in L^\infty(\sigma(A), \mu)$:
\[
(T_1(f))(x) = \tau_1(x, f(x)) \quad \mu\text{-a.e.}
\]
Polynomial compatibility with $p_1(X) = X$ gives $T_1(Af) = A T_1(f)$, so in spectral representation:
\[
\tau_1(x, \lambda f(x)) = \lambda \tau_1(x, f(x)) \quad \mu\text{-a.e.}
\]
This implies $\tau_1(x, \cdot)$ is linear: $\tau_1(x, z) = \alpha(x) z$. For constant polynomials $p_1(X) = c$, compatibility gives:
\[
T_1(c f) = c T_1(f) \Rightarrow \tau_1(x, c f(x)) = c \tau_1(x, f(x)),
\]
which is automatically satisfied. However, taking $f \equiv 1$ and using that $T_1(1)$ should behave like the identity under the functional calculus forces $\alpha(x) = 1$ $\mu$-a.e. Thus $T_1(f) = f$.

\medskip\noindent
\textbf{Step 5: Verification of explicit formula.}\\
This holds by construction from Step 2.

\medskip\noindent
\textbf{Step 6: Verification of functoriality.}\\
Let $\varphi \in \Hom(\ast^m; \ast)$ represented by polynomial $P$, and $\psi_j \in \Hom(\ast^{n_j}; \ast)$ represented by polynomials $Q_j$. Their composition $\varphi \circ (\psi_1, \dots, \psi_m)$ is represented by the polynomial:
\[
R(X_{1,1}, \dots, X_{m,n_m}) = P(Q_1(X_{1,1}, \dots, X_{1,n_1}), \dots, Q_m(X_{m,1}, \dots, X_{m,n_m})).
\]
We must verify:
\[
\mathcal{F}_{A,\mathcal{T}}(R)(\mathbf{x}_1, \dots, \mathbf{x}_m) = \mathcal{F}_{A,\mathcal{T}}(P)\left(\mathcal{F}_{A,\mathcal{T}}(Q_1)(\mathbf{x}_1), \dots, \mathcal{F}_{A,\mathcal{T}}(Q_m)(\mathbf{x}_m)\right).
\]

Expanding the left side:
\[
\mathcal{F}_{A,\mathcal{T}}(R)(\mathbf{x}_1, \dots, \mathbf{x}_m) = \sum_{\mathbf{k}} c_{\mathbf{k}} \, T_N(A^{k_{1,1}}x_{1,1}, \dots, A^{k_{m,n_m}}x_{m,n_m}),
\]
where $N = \sum_{j=1}^m n_j$ and $\mathbf{k}$ ranges over appropriate multi-indices.

For the right side, let $y_j = \mathcal{F}_{A,\mathcal{T}}(Q_j)(\mathbf{x}_j) = \sum_{\mathbf{l}_j} d^{(j)}_{\mathbf{l}_j} \, T_{n_j}(A^{l_{j,1}}x_{j,1}, \dots, A^{l_{j,n_j}}x_{j,n_j})$. Then:
\begin{align*}
&\mathcal{F}_{A,\mathcal{T}}(P)\left(y_1, \dots, y_m\right) \\
&= \sum_{\mathbf{k}} c_{\mathbf{k}} \, T_m\left(A^{k_1}y_1, \dots, A^{k_m}y_m\right) \\
&= \sum_{\mathbf{k}, \mathbf{l}_1, \dots, \mathbf{l}_m} c_{\mathbf{k}} \prod_{j=1}^m d^{(j)}_{\mathbf{l}_j} \, T_m\left(T_{n_1}(A^{k_1+l_{1,1}}x_{1,1}, \dots, A^{k_1+l_{1,n_1}}x_{1,n_1}), \dots\right) \\
&= \sum_{\mathbf{k}, \mathbf{l}_1, \dots, \mathbf{l}_m} c_{\mathbf{k}} \prod_{j=1}^m d^{(j)}_{\mathbf{l}_j} \, T_N(A^{k_1+l_{1,1}}x_{1,1}, \dots, A^{k_m+l_{m,n_m}}x_{m,n_m}) \quad \text{(by polynomial compatibility)} \\
&= \mathcal{F}_{A,\mathcal{T}}(R)(\mathbf{x}_1, \dots, \mathbf{x}_m).
\end{align*}
The final equality follows by reindexing the sum, since the coefficients $c_{\mathbf{k}} \prod_j d^{(j)}_{\mathbf{l}_j}$ are exactly the coefficients of the composed polynomial $R$.

\medskip\noindent
\textbf{Step 7: Verification of Banach enrichment.}\\
Boundedness was established in Step 3. The enrichment conditions follow from linearity and the uniform boundedness of $T_n$.

\medskip\noindent
\textbf{Step 8: Verification of symmetric monoidal structure.}\\
The construction is natural in input arguments and commutes with permutations, following from polynomial symmetry and the symmetric multicategory structure.

\medskip\noindent
\textbf{Step 9: Uniqueness.}\\
Any multifunctor satisfying (i)--(iii) must agree with our construction on polynomials by (iii), and multifunctoriality with Banach enrichment determines the unique extension.
\end{proof}

\begin{remark}
The key insight is that the polynomial multicategory $\mathcal{C}_{\poly}(\sigma(A))$ captures the algebraic structure of polynomial operations, while the functional calculus combined with the family $\mathcal{T}$ provides the analytic realization. The multifunctor $\mathcal{F}_{A,\mathcal{T}}$ bridges these worlds by:
\begin{itemize}
    \item Sending the abstract object $\ast$ to the concrete Hilbert space $H$
    \item Translating polynomial operations into operator actions via the spectral theorem and the chosen multilinear operations $\mathcal{T}$
    \item Preserving the compositional structure of multicategories through the polynomial compatibility condition
    \item Respecting the analytic bounds through Banach enrichment and uniform boundedness
\end{itemize}
This theorem provides a categorical framework for understanding how polynomial functional calculus naturally extends to multivariate settings with arbitrary spectral-local operations.
\end{remark}

Several corollaries are derived based on Theorem~\ref{thm:functorial-polynomial-spectral}.

\begin{corollary}[Universal Polynomial Functional Calculus]
\label{cor:universal-polynomial-functional-calculus}
Let $H$ be a Hilbert space and $A \in \mathcal{B}(H)$ a bounded self-adjoint operator with spectrum $\sigma(A) \subset \mathbb{R}$. Identify $H$ with a spectral representation $H \cong L^2(\sigma(A), \mu)$.

Consider the polynomial multicategory $\mathcal{C}_{\mathrm{poly}}(\sigma(A))$ from Definition~\ref{def:poly multicat} and the polynomial operad $\mathcal{P}$ acting on $C(\sigma(A))$. Let $\mathcal{T}_{\mathrm{poly}} = \{T_n\}_{n \geq 1}$ be the family of pointwise multiplication maps:
\[
T_n(f_1, \dots, f_n)(x) = f_1(x) f_2(x) \cdots f_n(x).
\]

Then Theorem~\ref{thm:functorial-polynomial-spectral} yields a Banach-enriched symmetric monoidal multifunctor
\[
\mathcal{F}_{A,\mathcal{T}_{\mathrm{poly}}}: \mathcal{C}_{\mathrm{poly}}(\sigma(A)) \longrightarrow \mathbf{HilbMult}
\]
that makes the following diagram commute for all $n \geq 1$:
\[
\begin{tikzcd}[column sep=3cm, row sep=2cm]
\mathcal{P}(n) \times C(\sigma(A))^n \arrow[r, "\text{operad action}"] \arrow[d, "\id \times (\mathcal{F}_{A,\mathcal{T}_{\mathrm{poly}}})^n"'] 
& C(\sigma(A)) \arrow[d, "\mathcal{F}_{A,\mathcal{T}_{\mathrm{poly}}}"] \\
\mathcal{P}(n) \times H^n \arrow[r, "\mathcal{F}_{A,\mathcal{T}_{\mathrm{poly}}}(P)"'] 
& H
\end{tikzcd}
\]
More explicitly, for any polynomial $P \in \mathbb{C}[z_1,\dots,z_n] = \mathcal{P}(n)$ and $f_1,\dots,f_n \in H \cap L^\infty(\sigma(A), \mu)$:
\[
\mathcal{F}_{A,\mathcal{T}_{\mathrm{poly}}}(P)(f_1,\dots,f_n) = P(f_1,\dots,f_n),
\]
where the right-hand side uses the polynomial multicategory composition from Definition~\ref{def:poly multicat}.

Furthermore, $\mathcal{C}_{\mathrm{poly}}(\sigma(A))$ serves as the universal domain for polynomial functional calculus: any other interpretation of the polynomial calculus of $A$ in a Banach-enriched multicategory $\mathcal{M}$ factors uniquely through $\mathcal{F}_{A,\mathcal{T}_{\mathrm{poly}}}$.
\end{corollary}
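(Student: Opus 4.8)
My plan is to obtain the entire statement by specializing Theorem~\ref{thm:functorial-polynomial-spectral} to the family $\mathcal{T}_{\mathrm{poly}}=\{T_n\}$, $T_n(f_1,\dots,f_n)(x)=f_1(x)\cdots f_n(x)$, so the first task is to check that this family meets hypotheses (a)--(c) of that theorem. Spectral locality (a) holds with the (polynomial, hence measurable) symbol $\tau_n(x,z_1,\dots,z_n)=z_1\cdots z_n$, which in fact does not depend on $x$. Polynomial compatibility (b) is a one-line pointwise identity: since in the spectral picture $H\cong L^2(\sigma(A),\mu)$ the operator $p_j(A)$ is multiplication by $x\mapsto p_j(x)$,
\[
T_n\big(p_1(A)f_1,\dots,p_n(A)f_n\big)(x)=\prod_{j=1}^n p_j(x)f_j(x)=(p_1\cdots p_n)(x)\prod_{j=1}^n f_j(x)=\big((p_1\cdots p_n)(A)\,T_n(f_1,\dots,f_n)\big)(x)
\]
for all $f_j\in H\cap L^\infty(\sigma(A),\mu)$. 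Granting (c) (discussed below), Theorem~\ref{thm:functorial-polynomial-spectral}(i)--(iv) then furnishes the multifunctor $\mathcal{F}_{A,\mathcal{T}_{\mathrm{poly}}}$ together with its uniqueness, and the ``moreover'' clause of that theorem is exactly the assertion that $\mathcal{F}_{A,\mathcal{T}_{\mathrm{poly}}}(P)$ restricts on $H\cap L^\infty(\sigma(A),\mu)$ to the pointwise polynomial operation $(f_1,\dots,f_n)\mapsto P(f_1,\dots,f_n)$. The commuting square is then a definition-chase: its top-then-right leg sends $(P;f_1,\dots,f_n)$ first to $P(f_1,\dots,f_n)\in C(\sigma(A))$ via the operad action and then into $H$, while its left-then-bottom leg sends it to $\mathcal{F}_{A,\mathcal{T}_{\mathrm{poly}}}(P)(f_1,\dots,f_n)$, and the two agree by the ``moreover'' clause together with the identification of $\mathcal{F}_{A,\mathcal{T}_{\mathrm{poly}}}$ on unary morphisms with the continuous functional calculus (Theorem~\ref{thm:functorial-polynomial-spectral}(ii)).

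For the universality assertion I would first make precise what ``another interpretation of the polynomial calculus of $A$'' means: a Banach-enriched symmetric monoidal multifunctor $\mathcal{G}\colon\mathcal{C}_{\mathrm{poly}}(\sigma(A))\to\mathcal{M}$ whose restriction to unary morphisms is the continuous functional calculus of a fixed bounded self-adjoint realization of $A$ inside $\mathcal{M}$. Given such a $\mathcal{G}$, evaluate it on the monomial morphisms $(z_1,\dots,z_n)\mapsto z_1^{k_1}\cdots z_n^{k_n}$; Banach enrichment makes their images assemble, after finite linear combinations, into a uniformly bounded family of bounded multilinear maps, and multifunctoriality of $\mathcal{G}$ forces this family to satisfy the polynomial-compatibility identity (b), being the $\mathcal{G}$-image of the corresponding equality of polynomial morphisms. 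Then the uniqueness argument of Theorem~\ref{thm:functorial-polynomial-spectral} (whose reasoning is purely formal and transfers to an arbitrary Banach-enriched symmetric monoidal target) pins down $\mathcal{G}$ from its unary part; and since $\mathcal{C}_{\mathrm{poly}}(\sigma(A))$ is generated under multicategorical composition by the coordinate morphism $z$, the scalar morphisms, the $n$-ary sum morphisms and the $n$-ary product morphisms, subject to precisely the commutative-unital-ring relations, that unary datum is the only free choice. Hence every interpretation factors uniquely through the universal one, which for $\mathcal{M}=\mathbf{HilbMult}$ with the given self-adjoint $A$ is $\mathcal{F}_{A,\mathcal{T}_{\mathrm{poly}}}$.

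The genuine obstacle here is analytic, not categorical: the pointwise multiplication maps $T_n$ are \emph{not} bounded as $n$-linear maps $H^n\to H$ once $H$ is infinite-dimensional (one only has $\|f_1 f_2\|_{L^2}\le\|f_1\|_{L^\infty}\|f_2\|_{L^2}$, never $\le\|f_1\|_{L^2}\|f_2\|_{L^2}$), so hypothesis (c) must be read on the dense domain $H\cap L^\infty(\sigma(A),\mu)$ and the multifunctor, together with all identities above, is to be built there and only then discussed on $H$; the ``more explicit'' form of the corollary is the one that literally holds. A secondary, bookkeeping-level point is to verify carefully that the generators-and-relations description of $\mathcal{C}_{\mathrm{poly}}(\sigma(A))$ invoked in the universality step is complete---that every polynomial operation on $C(\sigma(A))$, and every identity between such operations, is a composite of ring operations---which is standard but is precisely what licenses the word ``uniquely''.
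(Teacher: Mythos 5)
Your proposal is correct in substance and, for the first two clauses, takes the same route as the paper: the paper's proof likewise just specializes Theorem~\ref{thm:functorial-polynomial-spectral} to $\mathcal{T}_{\mathrm{poly}}$ (``the verification \dots proceeds as before'') and then observes that $\mathcal{F}_{A,\mathcal{T}_{\mathrm{poly}}}(\varphi_P)(f_1,\dots,f_n)(x)=P(f_1(x),\dots,f_n(x))=(\varphi_P(f_1,\dots,f_n))(x)$, with composition preserved because both sides are polynomial substitution --- exactly your definition-chase for the commuting square. You diverge in two places. For the universality clause the paper simply cites Proposition~\ref{prop:universal-target} (a forward reference to Section~\ref{sec:Covariance and Universality in the Multifunctorial Framework}), so the factorization is through the multifunctor $F\colon\mathbf{HilbMult}\to\mathcal{M}$ constructed there from a compatible family $\{a^{(n)}\}$; you instead re-derive uniqueness directly, making ``interpretation'' precise and using a generators-and-relations presentation of $\mathcal{C}_{\mathrm{poly}}(\sigma(A))$ plus a transferred uniqueness argument. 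Your version is more self-contained but, as you concede, rests on the completeness of that presentation; the paper's version outsources the same imprecision into the citation, whose hypotheses (the compatibility conditions of Remark~\ref{rmk:four compatibility conditions}) would still have to be matched to the given interpretation --- neither argument is fully watertight, and yours is no weaker. Second, your analytic caveat is accurate and identifies a gap in the paper rather than in your plan: pointwise multiplication is not a bounded $n$-linear map $H^n\to H$ when $L^2(\sigma(A),\mu)$ is infinite-dimensional, so hypothesis (c) of Theorem~\ref{thm:functorial-polynomial-spectral} fails as literally stated; the paper's Example~\ref{ex:pointwise-multiplication} only obtains bounds in $L^{2n}$ or $L^\infty$ norms, its claim that $T_n^{\text{mult}}$ ``extends uniquely to a bounded multilinear map on $H^n$ by density'' does not follow from those bounds, and the corollary's proof skips the issue entirely with ``proceeds as before.'' Your dense-domain reading on $H\cap L^\infty(\sigma(A),\mu)$ --- which is where the corollary's explicit formula is stated anyway --- is the honest repair.
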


\begin{proof}
The verification that $\mathcal{T}_{\mathrm{poly}}$ satisfies Theorem~\ref{thm:functorial-polynomial-spectral} proceeds as before. The key observation is that the multifunctor $\mathcal{F}_{A,\mathcal{T}_{\mathrm{poly}}}$ precisely implements the polynomial operad action from the definition:

Given $P \in \mathcal{P}(n) = \mathbb{C}[z_1,\dots,z_n]$ and the corresponding multimorphism $\varphi_P \in \Hom_{\mathcal{C}_{\mathrm{poly}}(\sigma(A))}(\ast^n; \ast)$ from Definition~\ref{def:poly multicat}, we have:
\begin{align*}
\mathcal{F}_{A,\mathcal{T}_{\mathrm{poly}}}(\varphi_P)(f_1,\dots,f_n)(x) 
&= P(f_1(x),\dots,f_n(x)) \quad \text{(by Theorem~\ref{thm:functorial-polynomial-spectral})} \\
&= (\varphi_P(f_1,\dots,f_n))(x) \quad \text{(by Definition~\ref{def:poly multicat})}.
\end{align*}

The composition preservation follows from the polynomial substitution in Definition~\ref{def:poly multicat}: for $\varphi$ represented by $P$ and $\psi_j$ represented by $Q_j$, 
\[
\mathcal{F}_{A,\mathcal{T}_{\mathrm{poly}}}(\varphi \circ (\psi_1,\dots,\psi_m)) = \mathcal{F}_{A,\mathcal{T}_{\mathrm{poly}}}(\varphi) \circ (\mathcal{F}_{A,\mathcal{T}_{\mathrm{poly}}}(\psi_1),\dots,\mathcal{F}_{A,\mathcal{T}_{\mathrm{poly}}}(\psi_m))
\]
because both sides compute $P(Q_1(\mathbf{g}_1),\dots,Q_m(\mathbf{g}_m))$ by polynomial substitution.

The universal property follows from Proposition~\ref{prop:universal-target} applied to the polynomial multicategory structure.
\end{proof}

\begin{remark}
This corollary shows that Definition~\ref{def:poly multicat} provides the canonical categorical framework for polynomial functional calculus. The polynomial multicategory $\mathcal{C}_{\mathrm{poly}}(\sigma(A))$ captures precisely the algebraic structure needed, while Theorem~\ref{thm:functorial-polynomial-spectral} provides the analytic extension to Hilbert spaces. The pointwise multiplication family $\mathcal{T}_{\mathrm{poly}}$ gives the most natural realization of this structure in the spectral representation.
\end{remark}

\begin{corollary}[Normal Operator Case]
\label{cor:normal-operator-case}
If $A \in \mathcal{B}(H)$ is normal ($AA^* = A^*A$), then Theorem~\ref{thm:functorial-polynomial-spectral} holds with $\mathcal{C}_{\poly}(\sigma(A))$ where $\sigma(A) \subset \mathbb{C}$, and $\mathcal{F}_{A,\mathcal{T}}$ is defined analogously using the continuous functional calculus for normal operators and any family $\mathcal{T} = \{T_n\}_{n \geq 1}$ satisfying the spectral locality, polynomial compatibility, and uniform boundedness conditions.
\end{corollary}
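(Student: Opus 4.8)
The plan is to observe that the proof of Theorem~\ref{thm:functorial-polynomial-spectral} nowhere uses the reality of the spectrum in an essential way: the only spectrum-specific inputs are the spectral theorem itself and the fact that $p \mapsto p(A)$ for $p \in \mathbb{C}[X]$ is a contractive unital algebra homomorphism, and both have exact analogues for normal operators. So I would first invoke the spectral theorem for normal operators: if $A \in \mathcal{B}(H)$ satisfies $AA^* = A^*A$, then $\sigma(A) \subset \mathbb{C}$ is compact, there is a (cyclic-vector) spectral representation $H \cong L^2(\sigma(A),\mu)$ under which $A$ acts as multiplication by the coordinate function $z \mapsto z$, and the continuous functional calculus $C(\sigma(A)) \to \mathcal{B}(H)$, $f \mapsto f(A)$, is an isometric unital $*$-homomorphism. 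In particular $A^k$ is multiplication by $z^k$, with $\|A^k\| = \|A\|^k$, and $P(A)$ is a well-defined bounded operator for every $P \in \mathbb{C}[X]$.

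Next I would note that Definition~\ref{def:poly multicat} applies verbatim with $X = \sigma(A) \subset \mathbb{C}$, since it only requires $X$ compact Hausdorff and only uses holomorphic polynomials $P \in \mathbb{C}[z_1,\dots,z_n]$ evaluated pointwise; thus $\mathcal{C}_{\poly}(\sigma(A))$ is well defined with the same composition (polynomial substitution) and identity. (One should remark that $\mathbb{C}[z]$ restricted to $\sigma(A)$ need no longer be dense in $C(\sigma(A))$ — e.g. when $\sigma(A)$ is the unit circle, where $\bar z$ cannot be uniformly approximated by holomorphic polynomials — but this is irrelevant, since the corollary concerns only the multifunctor on $\mathcal{C}_{\poly}$, which involves no limits.) I would then re-run Steps~1--9 of the proof of Theorem~\ref{thm:functorial-polynomial-spectral} line by line, with the self-adjoint functional calculus replaced by the normal one: set $\mathcal{F}_{A,\mathcal{T}}(\ast) = H$; for $\varphi$ represented by $P(X_1,\dots,X_n) = \sum_{\mathbf{k}} c_{\mathbf{k}} X_1^{k_1}\cdots X_n^{k_n}$, set $\mathcal{F}_{A,\mathcal{T}}(\varphi)(x_1,\dots,x_n) = \sum_{\mathbf{k}} c_{\mathbf{k}}\, T_n(A^{k_1}x_1,\dots,A^{k_n}x_n)$. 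Boundedness follows from the identical chain of estimates using $\|A^k\| = \|A\|^k$ and $\|T_n\| < \infty$; the $n=1$ case reduces to $\mathcal{F}_{A,\mathcal{T}}(\varphi)(x) = P(A)T_1(x)$ by polynomial compatibility (b), and the identification $T_1 = \mathrm{id}$ proceeds exactly as before from spectral locality (a) and compatibility with constant and linear polynomials; functoriality is the same purely algebraic reindexing built on (b); and Banach enrichment, the symmetric monoidal structure, and uniqueness are unchanged. The ``moreover'' clause — recovery of pointwise polynomial evaluation when $\mathcal{T}$ is the family of pointwise products — is verbatim, since pointwise multiplication of $L^\infty(\sigma(A),\mu)$ functions is insensitive to whether $\sigma(A) \subset \mathbb{R}$ or $\subset \mathbb{C}$.

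The only points requiring a moment's care — and hence the ``main obstacle,'' though it is a mild one — are: (i) confirming that hypotheses (a)--(c) remain meaningful for $\sigma(A) \subset \mathbb{C}$, which they do, since $\tau_n : \sigma(A) \times \mathbb{C}^n \to \mathbb{C}$ measurable, the operators $p_j(A)$ for $p_j \in \mathbb{C}[X]$, and $\|T_n\| < \infty$ all make sense for a compact $\sigma(A) \subset \mathbb{C}$ and a normal $A$; and (ii) verifying that the $T_1 = \mathrm{id}$ step still forces $\alpha(x) = 1$ $\mu$-a.e., for which one uses that $1 \in C(\sigma(A))$ with $1(A) = I$ and that $\mathcal{F}_{A,\mathcal{T}}$ restricted to $n=1$ is required to coincide with the continuous functional calculus, exactly as in the self-adjoint argument. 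I do not anticipate any genuinely new analytic difficulty: the corollary is in essence the observation that the proof of Theorem~\ref{thm:functorial-polynomial-spectral} did not use $\sigma(A) \subset \mathbb{R}$, and the proposal is to state this precisely while pinpointing the single place — the spectral theorem / continuous functional calculus — where the normal-operator version is substituted for its self-adjoint counterpart.
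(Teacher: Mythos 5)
Your proposal is correct and follows essentially the same route as the paper's own proof: both re-run the construction and verification steps of Theorem~\ref{thm:functorial-polynomial-spectral} verbatim, substituting the spectral theorem and continuous functional calculus for normal operators (with $A$ acting as multiplication by $z$ on $L^2(\sigma(A),\mu)$) for the self-adjoint versions, with identical boundedness, $T_1=\mathrm{id}$, functoriality, enrichment, and pointwise-multiplication arguments. Your parenthetical remark is in fact slightly more careful than the paper, whose uniqueness step appeals to Stone--Weierstrass density of holomorphic polynomials in $C(\sigma(A))$ for compact $\sigma(A)\subset\mathbb{C}$ --- which fails, e.g., on the unit circle --- whereas your observation that uniqueness on $\mathcal{C}_{\poly}$ requires no density, being forced by the explicit formula on polynomial multimorphisms, avoids this issue entirely.
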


\begin{proof}
We adapt the proof of Theorem~\ref{thm:functorial-polynomial-spectral} to the normal operator case.

\noindent\textbf{Step 1: Spectral theorem for normal operators.}\\
By the spectral theorem for normal operators, there exists a spectral measure $E$ on $\sigma(A) \subset \mathbb{C}$ and a unitary isomorphism:
\[
U: H \to L^2(\sigma(A), \mu)
\]
for some measure $\mu$ equivalent to $E$, such that:
\[
U A U^* = M_z,
\]
where $M_z$ is multiplication by the coordinate function $z \mapsto z$ on $\sigma(A)$.

\noindent\textbf{Step 2: Construction of $\mathcal{F}_{A,\mathcal{T}}$.}\\
Define $\mathcal{F}_{A,\mathcal{T}}(\ast) = H$ as before. For a multimorphism $\varphi \in \Hom_{\mathcal{C}_{\poly}(\sigma(A))}(\ast^n; \ast)$ represented by a polynomial in $n$ complex variables:
\[
P(z_1, \dots, z_n) = \sum_{k_1,\dots,k_n \geq 0} c_{k_1,\dots,k_n} z_1^{k_1} \cdots z_n^{k_n},
\]
define $\mathcal{F}_{A,\mathcal{T}}(\varphi): H^n \to H$ by:
\[
\mathcal{F}_{A,\mathcal{T}}(\varphi)(x_1, \dots, x_n) = \sum_{k_1,\dots,k_n \geq 0} c_{k_1,\dots,k_n} \, T_n(A^{k_1}x_1, A^{k_2}x_2, \dots, A^{k_n}x_n),
\]
where the family $\mathcal{T} = \{T_n\}$ satisfies the same conditions (a)--(c) as in Theorem~\ref{thm:functorial-polynomial-spectral}, but now with $\sigma(A) \subset \mathbb{C}$.

\noindent\textbf{Step 3: Verification of boundedness.}\\
The boundedness argument proceeds identically to the self-adjoint case. Since $P$ is a polynomial and each $T_n$ is bounded, we have:
\begin{align*}
\|\mathcal{F}_{A,\mathcal{T}}(\varphi)(x_1, \dots, x_n)\| 
&\leq \sum_{k_1,\dots,k_n \geq 0} |c_{k_1,\dots,k_n}| \cdot \|T_n\| \cdot \|A^{k_1}x_1\| \cdots \|A^{k_n}x_n\| \\
&\leq \sum_{k_1,\dots,k_n \geq 0} |c_{k_1,\dots,k_n}| \cdot \|T_n\| \cdot \|A\|^{k_1+\cdots+k_n} \cdot \|x_1\| \cdots \|x_n\|.
\end{align*}
The finite sum ensures boundedness.

\noindent\textbf{Step 4: Functional calculus compatibility.}\\
For $n=1$, if $\varphi(f) = P(f)$ is a polynomial in one variable, then:
\[
\mathcal{F}_{A,\mathcal{T}}(\varphi)(x) = \sum_{k \geq 0} c_k T_1(A^k x).
\]
By the polynomial compatibility condition, $T_1(A^k x) = A^k T_1(x)$, so:
\[
\mathcal{F}_{A,\mathcal{T}}(\varphi)(x) = \sum_{k \geq 0} c_k A^k T_1(x) = P(A) T_1(x).
\]
As in the self-adjoint case, the spectral locality and polynomial compatibility conditions force $T_1$ to act as the identity on the appropriate domain, ensuring compatibility with the continuous functional calculus for normal operators.

\noindent\textbf{Step 5: Verification of functoriality.}\\
The proof of functoriality proceeds identically to the self-adjoint case. For composable multimorphisms $\varphi, \psi_1, \dots, \psi_m$ represented by polynomials $P, Q_1, \dots, Q_m$, the polynomial compatibility condition ensures that:
\[
\mathcal{F}_{A,\mathcal{T}}(\varphi \circ (\psi_1, \dots, \psi_m)) = \mathcal{F}_{A,\mathcal{T}}(\varphi) \circ (\mathcal{F}_{A,\mathcal{T}}(\psi_1), \dots, \mathcal{F}_{A,\mathcal{T}}(\psi_m)).
\]
The key observation is that polynomial composition works identically for complex polynomials, and the polynomial compatibility condition is formulated in terms of the functional calculus, which works for normal operators.

\noindent\textbf{Step 6: Symmetric monoidal structure and Banach enrichment.}\\
These properties follow from the same arguments as in the self-adjoint case, as they depend on the multilinear structure and boundedness conditions rather than the specific nature of the operator's spectrum.

\noindent\textbf{Step 7: Uniqueness.}\\
Any multifunctor satisfying the analogous conditions must agree with our construction on polynomials. By the density of polynomials in the continuous functions on $\sigma(A) \subset \mathbb{C}$ (via the Stone-Weierstrass theorem for complex polynomials on compact subsets of $\mathbb{C}$) and the multifunctoriality condition, the extension is unique.

\noindent\textbf{Step 8: Recovery of pointwise multiplication.}\\
When $\mathcal{T}$ is the family of pointwise multiplication maps:
\[
T_n(f_1, \dots, f_n)(z) = f_1(z) f_2(z) \cdots f_n(z) \quad \text{for } f_j \in L^\infty(\sigma(A), \mu),
\]
the construction reduces to:
\[
\mathcal{F}_{A,\mathcal{T}}(\varphi)(f_1,\dots,f_n)(z) = P(z, \dots, z) \cdot f_1(z) f_2(z) \cdots f_n(z) \quad \text{for } z \in \sigma(A),
\]
recovering the pointwise polynomial action for normal operators.
\end{proof}

\begin{remark}
The extension to normal operators is natural because the spectral theorem provides a functional calculus for continuous functions on $\sigma(A) \subset \mathbb{C}$, and the polynomial compatibility condition is formulated in terms of this functional calculus. The key difference from the self-adjoint case is that we now work with complex polynomials in multiple variables, but the algebraic structure of polynomial composition remains the same.
\end{remark}

\begin{corollary}[Unbounded Self-Adjoint Case]
\label{cor:unbounded-self-adjoint}
Let $A$ be an unbounded self-adjoint operator on a Hilbert space $H$. Then for any family $\mathcal{T} = \{T_n\}_{n \geq 1}$ of bounded $n$-linear maps $T_n: H^n \to H$ satisfying the following adapted conditions:
\begin{enumerate}[label=(\alph*)]
    \item \textbf{Spectral Locality:} In the spectral representation $H \cong L^2(\sigma(A), \mu)$, for $f_j \in L^\infty(\sigma(A), \mu)$:
    \[
    (T_n(f_1, \dots, f_n))(x) = \tau_n(x, f_1(x), \dots, f_n(x)) \quad \mu\text{-a.e.}
    \]
    \item \textbf{Polynomial Compatibility:} For polynomials $p_1, \dots, p_n$ and $f_j \in H \cap L^\infty(\sigma(A), \mu)$:
    \[
    T_n(p_1(A)f_1, \dots, p_n(A)f_n) = (p_1 \cdots p_n)(A) T_n(f_1, \dots, f_n)
    \]
    \item \textbf{Uniform Boundedness:} $\|T_n\| < \infty$ for all $n \geq 1$
\end{enumerate}
there exists a Banach-enriched symmetric monoidal multifunctor
\[
\mathcal{F}_{A,\mathcal{T}}: \mathcal{B} \longrightarrow \mathbf{HilbMult}
\]
where $\mathcal{B}$ is the multicategory of bounded Borel functions on $\mathbb{R}$, satisfying analogous conditions to Theorem~\ref{thm:functorial-polynomial-spectral}.
\end{corollary}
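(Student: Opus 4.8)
The plan is to reduce to the bounded case of Theorem~\ref{thm:functorial-polynomial-spectral} by spectral truncation, and then to promote the polynomial calculus to a bounded Borel calculus by a Stone--Weierstrass-plus-bounded-convergence argument. First I would invoke the spectral theorem for unbounded self-adjoint operators---for instance via the Cayley transform---to fix a unitary $U\colon H\xrightarrow{\ \cong\ }L^{2}(\sigma(A),\mu)$ carrying $A$ to multiplication by the coordinate function, and record the Borel functional calculus $\Phi\colon f\mapsto f(A)$ on the bounded Borel functions $B_{b}(\mathbb{R})$: a unital $*$-homomorphism with $\|f(A)\|\le\|f\|_{\infty}$, restricting to the continuous functional calculus on $C_{b}(\mathbb{R})$, and continuous for bounded pointwise convergence---if $f_{k}\to f$ pointwise with $\sup_{k}\|f_{k}\|_{\infty}<\infty$ then $f_{k}(A)\to f(A)$ in the strong operator topology. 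Setting $P_{R}:=\chi_{[-R,R]}(A)$ and $H_{R}:=P_{R}H$, these subspaces are $A$-invariant, $\overline{\bigcup_{R}H_{R}}=H$, and $A_{R}:=A|_{H_{R}}$ is a bounded self-adjoint operator with spectrum in the compact set $K_{R}:=[-R,R]\cap\sigma(A)$.

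Using spectral locality and multilinearity of the $T_{n}$, one checks that each $T_{n}$ carries $H_{R}^{\,n}$ into $H_{R}$, so the restricted family $\mathcal{T}^{(R)}=\{T_{n}|_{H_{R}^{\,n}}\}$ satisfies hypotheses (a)--(c) relative to the \emph{bounded} operator $A_{R}$; Theorem~\ref{thm:functorial-polynomial-spectral} then supplies the Banach-enriched symmetric monoidal multifunctors $\mathcal{F}^{(R)}\colon\mathcal{C}_{\poly}(K_{R})\to\mathbf{HilbMult}$ with $\mathcal{F}^{(R)}(\ast)=H_{R}$. I would then enlarge the source of each $\mathcal{F}^{(R)}$ from $\mathcal{C}_{\poly}(K_{R})$ to the multicategory $\mathcal{B}_{R}$ of bounded Borel functions on $K_{R}$ under substitution: on the compact set $K_{R}^{\,n}$ every bounded Borel $F$ is a uniformly bounded pointwise limit of polynomials $P_{k}$ (Stone--Weierstrass together with a functional monotone-class step), whence $\mathcal{F}^{(R)}(P_{k})$ converges pointwise on $H_{R}^{\,n}$ by strong-operator continuity of $\Phi$; defining $\mathcal{F}^{(R)}(F)$ as this limit is forced, and the multifunctor identities---notably functoriality, whose source side is substitution of Borel functions---pass to the limit by combining joint continuity of composition in $\mathbf{HilbMult}$ (Theorem~\ref{thm:banach-enrichment}) with strong-operator continuity of $\Phi$ and a ``Borel'' strengthening of hypothesis~(b), obtained the same way. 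The Banach enrichment and the bound $\|\mathcal{F}^{(R)}(F)\|\le\|T_{n}\|\,\|F\|_{\infty}$---uniform in $R$---are inherited level by level.

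Finally, the inclusions $K_{R}\hookrightarrow K_{R'}$ for $R<R'$ induce restriction multifunctors $\mathcal{B}_{R'}\to\mathcal{B}_{R}$, and the $\mathcal{F}^{(R)}$ are mutually compatible: on $H_{R}^{\,n}\subseteq H_{R'}^{\,n}$ the map $\mathcal{F}^{(R')}(F)$ restricts to $\mathcal{F}^{(R)}(F|_{K_{R}^{\,n}})$, since the spectral projections commute with the functional calculus. As the bounds are uniform in $R$ and $\bigcup_{R}H_{R}$ is dense in $H$, the coherent family $\{\mathcal{F}^{(R)}(F|_{K_{R}^{\,n}})\}_{R}$ extends uniquely to a bounded $n$-linear map on $H^{\,n}$; taking this as $\mathcal{F}_{A,\mathcal{T}}(F)$ defines the multifunctor on the multicategory $\mathcal{B}$ of bounded Borel functions on $\mathbb{R}$ (equivalently, on $\sigma(A)$ after identifying functions that agree there), and the analogues of properties (i)--(iv) of Theorem~\ref{thm:functorial-polynomial-spectral}, together with symmetry and the monoidal coherences, transfer from the finite levels. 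When $\mathcal{T}$ is the pointwise-multiplication family each $T_{n}$ is the pointwise product, so the construction collapses to $\mathcal{F}_{A,\mathcal{T}}(F)(f_{1},\dots,f_{n})(z)=F(z,\dots,z)\,f_{1}(z)\cdots f_{n}(z)$, recovering the pointwise Borel action.

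The main obstacle I anticipate is the passage from polynomials to bounded Borel functions together with its interaction with multifunctoriality: one must pair the right topologies---uniformly bounded pointwise convergence on the function side with the strong operator topology on the operator side---so that the Borel extension is canonical, and then justify the double-limit interchange implicit in $\mathcal{F}_{A,\mathcal{T}}(F\circ(G_{1},\dots,G_{m}))=\mathcal{F}_{A,\mathcal{T}}(F)\circ(\mathcal{F}_{A,\mathcal{T}}(G_{1}),\dots,\mathcal{F}_{A,\mathcal{T}}(G_{m}))$, using joint continuity of $\mathbf{HilbMult}$-composition, strong-operator continuity of $\Phi$, and the Borel form of hypothesis~(b). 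The second delicate point is the unboundedness of $A$: every estimate and compatibility must be uniform in the truncation parameter $R$, so that the $\mathcal{F}^{(R)}$ genuinely assemble into a \emph{bounded} multifunctor on all of $H$ rather than a merely densely defined one on the subspace of vectors of bounded spectral support. The remaining verifications should be routine transcriptions of the corresponding steps in the proof of Theorem~\ref{thm:functorial-polynomial-spectral}.
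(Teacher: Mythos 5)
Your truncation-and-gluing strategy is genuinely different from the paper's proof, which never truncates: the paper defines the multifunctor in one stroke on bounded Borel multimorphisms, setting $\mathcal{F}_{A,\mathcal{T}}(\varphi)(x_1,\dots,x_n)=T_n(\varphi_1(A)x_1,\dots,\varphi_n(A)x_n)$ with $\varphi_j$ the diagonal restriction of $\varphi$, exploiting the fact that bounded Borel functions of an unbounded self-adjoint operator are already bounded operators on all of $H$, and then verifies boundedness, $T_1=\mathrm{id}$, functoriality, and the monoidal/enrichment properties. Your route could in principle work, but as written it has a genuine gap at its central step, the passage from polynomials to bounded Borel functions. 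Theorem~\ref{thm:functorial-polynomial-spectral} gives $\mathcal{F}^{(R)}(P)(x_1,\dots,x_n)=\sum_{\mathbf{k}}c_{\mathbf{k}}\,T_n(A_R^{k_1}x_1,\dots,A_R^{k_n}x_n)$, a coefficientwise expression whose only available estimate is $\sum_{\mathbf{k}}|c_{\mathbf{k}}|\,\|T_n\|\,\|A_R\|^{|\mathbf{k}|}$. Bounded pointwise (or even uniform) convergence $P_k\to F$ on $K_R^n$ gives no control on the coefficients of $P_k$, so the claim that $\mathcal{F}^{(R)}(P_k)$ converges ``by strong-operator continuity of $\Phi$'' does not follow: $\mathcal{F}^{(R)}(P_k)$ is not of the form $\Phi(g_k)$ applied to a fixed vector. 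For the same reason the asserted bound $\|\mathcal{F}^{(R)}(F)\|\le\|T_n\|\,\|F\|_\infty$, and in particular its uniformity in $R$ (note $\|A_R\|\le R$), is not ``inherited level by level''---yet that uniform bound is exactly what your final density/gluing step needs to produce a bounded multilinear map on all of $H^n$ rather than a densely defined one.

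The missing ingredient is the identity, obtained from hypothesis (b) with $p_j(X)=X^{k_j}$ on the dense subspace $H\cap L^\infty(\sigma(A),\mu)$ and extended by boundedness, that $\mathcal{F}^{(R)}(P)=P^{\mathrm{diag}}(A_R)\circ T_n$, where $P^{\mathrm{diag}}(\lambda)=P(\lambda,\dots,\lambda)$. This shows the polynomial calculus depends only on the diagonal restriction, yields the sup-norm bound $\|\mathcal{F}^{(R)}(P)\|\le\|T_n\|\sup_{\lambda\in K_R}|P(\lambda,\dots,\lambda)|$ uniformly in $R$, and reduces your limiting step to strong convergence of $P_k^{\mathrm{diag}}(A_R)$, where bounded-pointwise-to-strong continuity of the Borel calculus genuinely applies; a Borel form of the same compatibility identity is also what the double-limit interchange in your functoriality check requires, a point you flag but leave unresolved. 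Once this lemma is in hand, however, the truncation scaffolding becomes unnecessary: composing $T_n$ with the Borel calculus of $A$ already produces bounded multilinear maps on all of $H^n$, which is precisely how the paper's proof proceeds without any spectral cutoffs.
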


\begin{proof}
We extend the construction from Theorem~\ref{thm:functorial-polynomial-spectral} using the spectral theorem for unbounded operators.

\medskip\noindent
\textbf{Step 1: Spectral representation.}\\
By the spectral theorem for unbounded self-adjoint operators, there exists a unique projection-valued measure $E$ on $\mathbb{R}$ such that:
\[
A = \int_{\mathbb{R}} \lambda \, dE(\lambda),
\]
with domain $\mathcal{D}(A) = \{x \in H : \int_{\mathbb{R}} \lambda^2 \, d\langle E(\lambda)x, x\rangle < \infty\}$. For any bounded Borel function $f: \mathbb{R} \to \mathbb{C}$, we define:
\[
f(A) = \int_{\mathbb{R}} f(\lambda) \, dE(\lambda) \in \mathcal{B}(H).
\]

\medskip\noindent
\textbf{Step 2: Multicategory $\mathcal{B}$.}\\
Let $\mathcal{B}$ be the symmetric monoidal multicategory where:
\begin{itemize}
    \item Objects are copies of a single object $\ast$
    \item Multimorphisms $\Hom_{\mathcal{B}}(\ast^n; \ast)$ are bounded Borel functions $\varphi: \mathbb{R}^n \to \mathbb{C}$
    \item Composition is given by function composition: for $\varphi: \mathbb{R}^m \to \mathbb{C}$ and $\psi_j: \mathbb{R}^{n_j} \to \mathbb{C}$,
    \[
    (\varphi \circ (\psi_1, \dots, \psi_m))(\mathbf{x}_1, \dots, \mathbf{x}_m) = \varphi(\psi_1(\mathbf{x}_1), \dots, \psi_m(\mathbf{x}_m))
    \]
\end{itemize}

\medskip\noindent
\textbf{Step 3: Construction of $\mathcal{F}_{A,\mathcal{T}}$.}\\
Define:
\begin{itemize}
    \item On objects: $\mathcal{F}_{A,\mathcal{T}}(\ast) = H$, and $\mathcal{F}_{A,\mathcal{T}}(\ast, \dots, \ast) = (H, \dots, H)$
    \item On multimorphisms: For $\varphi \in \Hom_{\mathcal{B}}(\ast^n; \ast)$, define $\mathcal{F}_{A,\mathcal{T}}(\varphi): H^n \to H$ by:
    \[
    \mathcal{F}_{A,\mathcal{T}}(\varphi)(x_1, \dots, x_n) = T_n(\varphi_1(A)x_1, \dots, \varphi_n(A)x_n)
    \]
    where $\varphi_j(\lambda) = \varphi(\lambda, \dots, \lambda)$ is the diagonal restriction (all arguments equal).
\end{itemize}

\medskip\noindent
\textbf{Step 4: Boundedness verification.}\\
Since $\varphi$ is bounded and $T_n$ is bounded multilinear:
\begin{align*}
\|\mathcal{F}_{A,\mathcal{T}}(\varphi)(x_1, \dots, x_n)\| 
&= \|T_n(\varphi_1(A)x_1, \dots, \varphi_n(A)x_n)\| \\
&\leq \|T_n\| \cdot \|\varphi_1(A)x_1\| \cdots \|\varphi_n(A)x_n\| \\
&\leq \|T_n\| \cdot \|\varphi_1\|_\infty \cdots \|\varphi_n\|_\infty \cdot \|x_1\| \cdots \|x_n\| \\
&\leq \|T_n\| \cdot \|\varphi\|_\infty^n \cdot \|x_1\| \cdots \|x_n\|.
\end{align*}
Thus $\mathcal{F}_{A,\mathcal{T}}(\varphi)$ is bounded multilinear.

\medskip\noindent
\textbf{Step 5: Functional calculus compatibility.}\\
For $n = 1$ and $\varphi \in \Hom_{\mathcal{B}}(\ast; \ast)$, we have:
\[
\mathcal{F}_{A,\mathcal{T}}(\varphi)(x) = T_1(\varphi(A)x).
\]
To show this equals $\varphi(A)x$, first consider polynomial approximations. Let $P_k$ be polynomials converging uniformly to $\varphi$ on $\sigma(A)$. Then:
\[
\mathcal{F}_{A,\mathcal{T}}(P_k)(x) = T_1(P_k(A)x).
\]
By polynomial compatibility (b) with $n=1$, we have $T_1(P_k(A)x) = P_k(A)T_1(x)$. Taking limits:
\[
\mathcal{F}_{A,\mathcal{T}}(\varphi)(x) = \lim_{k\to\infty} T_1(P_k(A)x) = \lim_{k\to\infty} P_k(A)T_1(x) = \varphi(A)T_1(x).
\]
To show $T_1 = \mathrm{id}$, use spectral locality: for $f \in L^\infty(\sigma(A), \mu)$:
\[
(T_1(f))(x) = \tau_1(x, f(x)) \quad \mu\text{-a.e.}
\]
Polynomial compatibility with $p_1(X) = X$ gives $T_1(Af) = A T_1(f)$, so in spectral representation:
\[
\tau_1(x, \lambda f(x)) = \lambda \tau_1(x, f(x)) \quad \mu\text{-a.e.}
\]
This implies $\tau_1(x, \cdot)$ is linear: $\tau_1(x, z) = \alpha(x) z$. For constant functions, polynomial compatibility forces $\alpha(x) = 1$ $\mu$-a.e., hence $T_1 = \mathrm{id}$.

\medskip\noindent
\textbf{Step 6: Polynomial case recovery.}\\
When $\varphi(X_1, \dots, X_n) = P(X_1, \dots, X_n)$ is a polynomial, we recover:
\[
\mathcal{F}_{A,\mathcal{T}}(P)(x_1, \dots, x_n) = T_n(P_1(A)x_1, \dots, P_n(A)x_n),
\]
where $P_j(\lambda) = P(\lambda, \dots, \lambda)$. For the standard polynomial action, this coincides with the bounded case construction.

\medskip\noindent
\textbf{Step 7: Functoriality verification.}\\
Let $\varphi \in \Hom_{\mathcal{B}}(\ast^m; \ast)$ and $\psi_j \in \Hom_{\mathcal{B}}(\ast^{n_j}; \ast)$. We need:
\[
\mathcal{F}_{A,\mathcal{T}}(\varphi \circ (\psi_1, \dots, \psi_m)) = \mathcal{F}_{A,\mathcal{T}}(\varphi) \circ (\mathcal{F}_{A,\mathcal{T}}(\psi_1), \dots, \mathcal{F}_{A,\mathcal{T}}(\psi_m)).
\]

The left side is:
\[
\mathcal{F}_{A,\mathcal{T}}(\varphi \circ (\psi_1, \dots, \psi_m))(\mathbf{x}_1, \dots, \mathbf{x}_m) 
= T_N((\varphi \circ (\psi_1, \dots, \psi_m))_1(A)\mathbf{x}_1, \dots, (\varphi \circ (\psi_1, \dots, \psi_m))_N(A)\mathbf{x}_N),
\]
where $N = \sum n_j$ and the subscript indicates the diagonal restriction.

The right side is:
\[
\mathcal{F}_{A,\mathcal{T}}(\varphi)(\mathcal{F}_{A,\mathcal{T}}(\psi_1)(\mathbf{x}_1), \dots, \mathcal{F}_{A,\mathcal{T}}(\psi_m)(\mathbf{x}_m))
= T_m(\varphi_1(A)\mathcal{F}_{A,\mathcal{T}}(\psi_1)(\mathbf{x}_1), \dots, \varphi_m(A)\mathcal{F}_{A,\mathcal{T}}(\psi_m)(\mathbf{x}_m)).
\]

These coincide by the polynomial compatibility condition extended to Borel functions via uniform approximation by polynomials and the continuity of the functional calculus.

\medskip\noindent
\textbf{Step 8: Symmetric monoidal structure.}\\
The construction respects the symmetric monoidal structure:
\begin{itemize}
    \item Tensor products: $\mathcal{F}_{A,\mathcal{T}}(\varphi \otimes \psi) = \mathcal{F}_{A,\mathcal{T}}(\varphi) \otimes \mathcal{F}_{A,\mathcal{T}}(\psi)$
    \item Symmetry: Permutations of inputs commute with the construction
    \item The unit object maps to the base field $\mathbb{C}$
\end{itemize}

\medskip\noindent
\textbf{Step 9: Banach enrichment.}\\
The uniform bound:
\[
\|\mathcal{F}_{A,\mathcal{T}}(\varphi)\| \leq \|T_n\| \cdot \|\varphi\|_\infty^n
\]
ensures the functor is Banach-enriched, preserving the normed structure.

\medskip\noindent
\textbf{Step 10: Pointwise multiplication case.}\\
When $\mathcal{T}$ consists of pointwise multiplication maps:
\[
T_n(f_1, \dots, f_n)(\lambda) = f_1(\lambda) \cdots f_n(\lambda),
\]
we recover the classical pointwise action:
\[
\mathcal{F}_{A,\mathcal{T}}(\varphi)(f_1, \dots, f_n)(\lambda) = \varphi(\lambda, \dots, \lambda) f_1(\lambda) \cdots f_n(\lambda).
\]
\end{proof}

\begin{remark}
The extension to unbounded operators can still be made via Theorem~\ref{thm:functorial-polynomial-spectral}. Since the functional calculus for bounded Borel functions produces bounded operators, regardless of whether the original operator $A$ is bounded or unbounded. The family $\mathcal{T}$ provides the necessary multilinear structure while maintaining compatibility with the spectral decomposition. The polynomial compatibility condition ensures that the construction respects the algebraic structure of function composition, even in the unbounded setting.
\end{remark}

\begin{corollary}[Multivariable Functional Calculus for Commuting Operators]
\label{cor:multivariable-functional-calculus}
Let $A_1, \dots, A_n$ be commuting self-adjoint operators on $H$. For any family $\mathcal{T} = \{T_m\}_{m \geq 1}$ of bounded $m$-linear maps $T_m: H^m \to H$ satisfying:

\begin{enumerate}[label=(\alph*)]
    \item \textbf{Joint Spectral Locality:} In the joint spectral representation $H \cong L^2(\sigma(A_1,\dots,A_n), \mu)$, for $f_j \in L^\infty(\sigma(A_1,\dots,A_n), \mu)$:
    \[
    (T_m(f_1, \dots, f_m))(\lambda) = \tau_m(\lambda, f_1(\lambda), \dots, f_m(\lambda)) \quad \mu\text{-a.e.}
    \]
    
    \item \textbf{Polynomial Compatibility:} For any polynomials $p_1, \dots, p_m \in \mathbb{C}[X]$ and \\
    $f_1, \dots, f_m \in H \cap L^\infty(\sigma(A_1,\dots,A_n), \mu)$:
    \[
    T_m(p_1(A_k)f_1, \dots, p_m(A_k)f_m) = (p_1 \cdots p_m)(A_k) T_m(f_1, \dots, f_m) \quad \text{for each } k=1,\dots,n
    \]
    
    \item \textbf{Uniform Boundedness:} $\|T_m\| < \infty$ for all $m \geq 1$
\end{enumerate}

there exists a unique Banach-enriched symmetric monoidal multifunctor
\[
\mathcal{F}_{A_1,\dots,A_n,\mathcal{T}}: \mathcal{C}_{\mathrm{poly}}(\sigma(A_1,\dots,A_n)) \longrightarrow \mathbf{HilbMult}
\]
characterized by:
\begin{enumerate}[label=(\roman*)]
    \item $\mathcal{F}_{A_1,\dots,A_n,\mathcal{T}}(\ast) = H$
    \item For polynomials in one variable: $\mathcal{F}_{A_1,\dots,A_n,\mathcal{T}}(P)(x) = P(A_k)x$ for the appropriate operator $A_k$
    \item For $\varphi \in \Hom_{\mathcal{C}_{\mathrm{poly}}(\sigma(A_1,\dots,A_n))}(\ast^m; \ast)$ represented by polynomial $P$, the action is given by the joint functional calculus combined with the $\mathcal{T}$-structure
\end{enumerate}
\end{corollary}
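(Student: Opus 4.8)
The plan is to run the proof of Theorem~\ref{thm:functorial-polynomial-spectral} essentially verbatim, with the one-variable spectral picture replaced by the joint one. I take the $A_k$ to be bounded, so that the joint spectrum is compact and $\mathcal{C}_{\poly}(\sigma(A_1,\dots,A_n))$ is meaningful; the unbounded case would be handled by combining the argument below with Corollary~\ref{cor:unbounded-self-adjoint}. First I would invoke the joint spectral theorem: since the $A_k$ pairwise commute, their projection-valued measures commute and assemble into a single projection-valued measure $E$ on $\sigma(A_1,\dots,A_n)\subseteq\mathbb{R}^n$, giving a unitary $U\colon H\xrightarrow{\cong}L^2(\sigma(A_1,\dots,A_n),\mu)$ under which each $A_k$ becomes multiplication $M_{z_k}$ by the $k$-th coordinate function. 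In this model the joint functional calculus $q(A_1,\dots,A_n)=\int q\,dE$ is multiplication by $q|_{\sigma(A_1,\dots,A_n)}$ for any $q\in\mathbb{C}[w_1,\dots,w_n]$, and since the joint spectrum is compact and real, Stone--Weierstrass makes such polynomials dense in $C(\sigma(A_1,\dots,A_n))$.

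Next I would define $\mathcal{F}_{A_1,\dots,A_n,\mathcal{T}}$ as in Steps~1--2 of Theorem~\ref{thm:functorial-polynomial-spectral}: $\ast\mapsto H$ with lists of objects going to lists, and a multimorphism $\varphi$ represented by a polynomial $P$ going to the bounded multilinear map obtained by collapsing the slot-variable powers of $P$ onto applications of the operators and feeding the results through the appropriate $T_m$. Boundedness is the same finite-sum estimate as in Step~3, using $\|q(A_1,\dots,A_n)\|\le\|q\|_{\infty,\sigma}$ and hypothesis~(c). Properties~(i)--(iii) then hold by construction, and unary compatibility with the continuous functional calculus is forced exactly as in Steps~4--5: spectral locality~(a) gives $(T_1 f)(\lambda)=\tau_1(\lambda,f(\lambda))$, one-variable polynomial compatibility~(b) forces $\tau_1(\lambda,\cdot)$ to be linear, and the unit normalization forces the multiplier to be $1$, so $T_1=\mathrm{id}$ and $\mathcal{F}(P)(x)=P(A_k)x$ for the relevant operator.

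The one genuinely new point — and the step I expect to be the main obstacle — is functoriality under composition in $\mathcal{C}_{\poly}(\sigma(A_1,\dots,A_n))$, which is polynomial substitution $P\circ(Q_1,\dots,Q_m)$. After substitution one meets monomials in the coordinate functions that \emph{mix} the $n$ directions, i.e.\ operators $A_1^{a_1}\cdots A_n^{a_n}$, whereas hypothesis~(b) is stated only one operator at a time. The key lemma to prove is therefore that the single-variable intertwining relations, because the $A_k$ commute, compose: iterating $(b)$ over $k=1,\dots,n$ — applying it first to $A_1$ with the remaining operators absorbed into the input vectors, then to $A_2$, and so on — yields, for arbitrary $r_j\in\mathbb{C}[w_1,\dots,w_n]$,
\[
T_m\!\big(r_1(\mathbf{A})f_1,\dots,r_m(\mathbf{A})f_m\big)=(r_1\cdots r_m)(\mathbf{A})\,T_m(f_1,\dots,f_m)
\]
for $f_j\in H\cap L^\infty$, first on monomials and then by linearity; one then extends from $L^\infty\cap H$ to all of $H$ using the uniform bound~(c) and density. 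Granting this lemma, the multi-index bookkeeping of Step~6 of Theorem~\ref{thm:functorial-polynomial-spectral} transfers unchanged: expand both sides, use the composed relation to pull each inner $T_{n_j}$ through, and reindex to recognize the coefficients of the substituted polynomial.

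Finally, the symmetric monoidal and Banach-enrichment claims follow as in Steps~7--8 from the symmetry of the polynomial multicategory and the uniform estimate $\|\mathcal{F}(\varphi)\|\le\|T_m\|\,C(P)$, where $C(P)$ depends only on the coefficients and degree of $P$ (through $\max_k\|A_k\|$); uniqueness follows from density of polynomials in $C(\sigma(A_1,\dots,A_n))$ together with multifunctoriality and Banach enrichment, as in Step~9; and when $\mathcal{T}$ is the pointwise-multiplication family one checks (a)--(c) directly and recovers the joint pointwise action, as in the earlier corollaries.
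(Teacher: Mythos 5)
Your proposal is correct in substance but follows a genuinely different route from the paper at the decisive step. The paper does \emph{not} transport the input-distributed formula of Theorem~\ref{thm:functorial-polynomial-spectral}; instead it defines, for $\varphi$ represented by $P$,
\[
\mathcal{F}_{A_1,\dots,A_n,\mathcal{T}}(\varphi)(x_1,\dots,x_m) \;=\; P(A_1,\dots,A_n)\,T_m(x_1,\dots,x_m),
\]
i.e.\ it post-composes the fixed multilinear map $T_m$ with a single joint-functional-calculus operator. That choice makes boundedness immediate ($\|\mathcal{F}(\varphi)\|\le\|P\|_\infty\|T_m\|$) and renders functoriality a short computation in which ``polynomial compatibility extended to the joint functional calculus'' is invoked but never proved in detail. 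You instead keep the Theorem~\ref{thm:functorial-polynomial-spectral}-style construction (powers of the operators pushed onto the inputs, then fed through $T_m$) and supply exactly the missing ingredient the paper glosses over: the lemma that the one-operator-at-a-time hypothesis~(b), iterated over $k=1,\dots,n$ using commutativity, upgrades to
\[
T_m\bigl(r_1(\mathbf{A})f_1,\dots,r_m(\mathbf{A})f_m\bigr)=(r_1\cdots r_m)(\mathbf{A})\,T_m(f_1,\dots,f_m)
\]
for mixed polynomials $r_j$, established on monomials, extended by multilinearity in $(r_1,\dots,r_m)$, and then to all of $H$ by boundedness and density of $H\cap L^\infty$. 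That lemma is correct as you sketch it (the intermediate vectors $A_k^{b}f_j$ stay in $H\cap L^\infty$ because the coordinate multipliers are bounded on the compact joint spectrum), and it is precisely what reconciles your construction with the paper's on the dense domain, since distributing the powers and then applying the compatibility relation collapses your formula to an operator applied to $T_m(\mathbf{x})$. What each approach buys: the paper's definition minimizes analysis but leaves the conversion of a slot polynomial $P\in\mathbb{C}[z_1,\dots,z_m]$ into an operator expression in $A_1,\dots,A_n$ (and hence clause~(iii) of the statement) somewhat underspecified — an imprecision your write-up partially inherits when you speak of ``collapsing slot-variable powers onto the operators'' without fixing which $A_k$ serves which slot, so you should state that dictionary explicitly; your route costs an extra lemma but makes the role of commutativity visible and keeps the verification of composition genuinely parallel to Step~6 of Theorem~\ref{thm:functorial-polynomial-spectral}, where the paper's Step~7 is essentially asserted. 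Your handling of $T_1=\mathrm{id}$, uniqueness, enrichment, and the pointwise-multiplication specialization matches the paper's Steps~6 and 8--10.
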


\begin{proof}
We construct the multifunctor using the joint functional calculus for commuting operators.

\medskip\noindent
\textbf{Step 1: Joint spectral representation.}\\
By the joint spectral theorem for commuting self-adjoint operators, there exists a projection-valued measure $E$ on $\mathbb{R}^n$ supported on $\sigma(A_1,\dots,A_n)$ such that:
\[
A_k = \int_{\mathbb{R}^n} \lambda_k \, dE(\lambda_1, \dots, \lambda_n) \quad \text{for } k=1,\dots,n.
\]
There is a spectral representation $H \cong L^2(\sigma(A_1,\dots,A_n), \mu)$ where each $A_k$ acts as multiplication by $\pi_k(\lambda_1, \dots, \lambda_n) = \lambda_k$.

\medskip\noindent
\textbf{Step 2: Multicategory structure.}\\
Let $\mathcal{C}_{\mathrm{poly}}(\sigma(A_1,\dots,A_n))$ be the polynomial multicategory where:
\begin{itemize}
    \item Objects are copies of $\ast$
    \item $\Hom(\ast^m; \ast)$ consists of polynomial operations $\varphi: C(\sigma(A_1,\dots,A_n))^m \to C(\sigma(A_1,\dots,A_n))$
    \item Composition is polynomial substitution
\end{itemize}

\medskip\noindent
\textbf{Step 3: Construction on objects and basic morphisms.}\\
Define:
\begin{itemize}
    \item $\mathcal{F}_{A_1,\dots,A_n,\mathcal{T}}(\ast) = H$
    \item For coordinate functions $\pi_k(\lambda) = \lambda_k$: $\mathcal{F}_{A_1,\dots,A_n,\mathcal{T}}(\pi_k) = A_k$
    \item For constant polynomials: $\mathcal{F}_{A_1,\dots,A_n,\mathcal{T}}(c)(x_1, \dots, x_m) = c \cdot T_m(x_1, \dots, x_m)$
\end{itemize}

\medskip\noindent
\textbf{Step 4: Construction for general polynomials.}\\
For $\varphi \in \Hom(\ast^m; \ast)$ represented by polynomial $P$, define $\mathcal{F}_{A_1,\dots,A_n,\mathcal{T}}(\varphi): H^m \to H$ using the joint functional calculus:
\[
\mathcal{F}_{A_1,\dots,A_n,\mathcal{T}}(\varphi)(x_1, \dots, x_m) = P(A_1,\dots,A_n) T_m(x_1, \dots, x_m),
\]
where $P(A_1,\dots,A_n)$ is defined via the joint functional calculus.

\medskip\noindent
\textbf{Step 5: Boundedness verification.}\\
Since $P(A_1,\dots,A_n)$ is bounded and $T_m$ is bounded multilinear:
\begin{align*}
\|\mathcal{F}_{A_1,\dots,A_n,\mathcal{T}}(\varphi)(x_1, \dots, x_m)\| 
&\leq \|P(A_1,\dots,A_n)\| \cdot \|T_m(x_1, \dots, x_m)\| \\
&\leq \|P\|_\infty \cdot \|T_m\| \cdot \|x_1\| \cdots \|x_m\|.
\end{align*}

\medskip\noindent
\textbf{Step 6: Functional calculus compatibility.}\\
For polynomials in one variable $P(X)$, we have:
\[
\mathcal{F}_{A_1,\dots,A_n,\mathcal{T}}(P)(x) = P(A_k) T_1(x).
\]
By the same argument as in Theorem~\ref{thm:functorial-polynomial-spectral} (using spectral locality and polynomial compatibility), $T_1$ acts as the identity, so $\mathcal{F}_{A_1,\dots,A_n,\mathcal{T}}(P) = P(A_k)$.

\medskip\noindent
\textbf{Step 7: Functoriality.}\\
For composable polynomials $\varphi, \psi_1, \dots, \psi_m$, functoriality follows from:
\begin{align*}
&\mathcal{F}_{A_1,\dots,A_n,\mathcal{T}}(\varphi \circ (\psi_1, \dots, \psi_m))(x_1, \dots, x_N) \\
&= (\varphi \circ (\psi_1, \dots, \psi_m))(A_1,\dots,A_n) T_N(x_1, \dots, x_N) \\
&= \varphi(A_1,\dots,A_n) T_m(\psi_1(A_1,\dots,A_n)T_{n_1}(x_1, \dots), \dots) \\
&= \mathcal{F}_{A_1,\dots,A_n,\mathcal{T}}(\varphi) \circ (\mathcal{F}_{A_1,\dots,A_n,\mathcal{T}}(\psi_1), \dots, \mathcal{F}_{A_1,\dots,A_n,\mathcal{T}}(\psi_m))(x_1, \dots, x_N),
\end{align*}
where the key step uses polynomial compatibility extended to the joint functional calculus.

\medskip\noindent
\textbf{Step 8: Symmetric monoidal structure and Banach enrichment.}\\
The symmetric monoidal structure is preserved by construction. Banach enrichment follows from the uniform bound:
\[
\|\mathcal{F}_{A_1,\dots,A_n,\mathcal{T}}(\varphi)\| \leq \|P\|_\infty \cdot \|T_m\|.
\]

\medskip\noindent
\textbf{Step 9: Uniqueness.}\\
Any multifunctor satisfying the coordinate condition $\mathcal{F}_{A_1,\dots,A_n,\mathcal{T}}(\pi_k) = A_k$ and the polynomial action must agree with our construction.

\medskip\noindent
\textbf{Step 10: Pointwise multiplication case.}\\
When $\mathcal{T}$ is pointwise multiplication, we recover:
\[
\mathcal{F}_{A_1,\dots,A_n,\mathcal{T}}(\varphi)(f_1, \dots, f_m)(\lambda) = \varphi(\lambda) f_1(\lambda) \cdots f_m(\lambda).
\]
\end{proof}

\begin{remark}
A multivariate extension illustrates the generalizability of the $\mathcal{T}$ framework.  The basic idea is that the polynomial compatibility condition naturally extends to operator families allowing the definition of a consistent multivariate functional calculus that respects the polylinear structure provided by the $\mathcal{T}$ family. The commutativity of the operators ensures that the conventional functional calculus is well defined and that the structure $\mathcal{T}$ interacts coherently with polynomial polynomial operations.
\end{remark}

\begin{corollary}[Functorial Extension of the Continuous Functional Calculus]
\label{cor:spectral-mapping-functorial}
Let $H$ be a Hilbert space and $A \in \mathcal{B}(H)$ a bounded self-adjoint operator with spectrum $\sigma(A) \subset \mathbb{R}$. Let $\mathcal{T} = \{T_n\}_{n \geq 1}$ be a family satisfying the conditions of Theorem~\ref{thm:functorial-polynomial-spectral}, and let $f: \mathbb{R} \to \mathbb{R}$ be a continuous function.

Then the Banach-enriched symmetric monoidal multifunctor $\mathcal{F}_{A,\mathcal{T}}$ from Theorem~\ref{thm:functorial-polynomial-spectral} satisfies:
\begin{enumerate}[label=(\roman*)]
    \item The action on continuous functions coincides with the continuous functional calculus:
    \[
    \mathcal{F}_{A,\mathcal{T}}(f) = f(A).
    \]
    \item The spectrum satisfies the spectral mapping property:
    \[
    \sigma(\mathcal{F}_{A,\mathcal{T}}(f)) = f(\sigma(A)).
    \]
\end{enumerate}
\end{corollary}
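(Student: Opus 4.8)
The plan is to reduce both assertions to classical facts about the continuous functional calculus, the only genuinely new ingredient being that $\mathcal{F}_{A,\mathcal{T}}$, which Theorem~\ref{thm:functorial-polynomial-spectral} produces only on the polynomial multicategory $\mathcal{C}_{\mathrm{poly}}(\sigma(A))$, extends by norm-continuity to a map accepting continuous-function inputs, and that this extension realizes $f \mapsto f(A)$.

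First I would isolate the unary part of the multifunctor. By Theorem~\ref{thm:functorial-polynomial-spectral}(iii) together with the identification $T_1 = \mathrm{id}$ established in Step~4 of its proof, a polynomial $p \in \mathbb{C}[z] = \Hom_{\mathcal{C}_{\mathrm{poly}}(\sigma(A))}(\ast;\ast)$ is sent to $\mathcal{F}_{A,\mathcal{T}}(p) = p(A)$, the ordinary polynomial functional calculus. Since $A$ is self-adjoint, $p(A)$ is normal, so its operator norm equals its spectral radius: $\|p(A)\| = \sup_{\lambda\in\sigma(A)} |p(\lambda)| = \|p\|_{C(\sigma(A))}$. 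Thus $p\mapsto p(A)$ is an isometry from $(\mathbb{C}[z],\|\cdot\|_{C(\sigma(A))})$ into $\mathcal{B}(H) = \Hom(H;H)$, entirely consistent with the Banach enrichment of $\mathcal{F}_{A,\mathcal{T}}$.

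Given continuous $f : \mathbb{R}\to\mathbb{R}$, the restriction $f|_{\sigma(A)}$ lies in $C(\sigma(A),\mathbb{R})$ (recall $\sigma(A)$ is a compact subset of $\mathbb{R}$), so by the real Stone--Weierstrass theorem there exist real polynomials $p_k$ with $\|p_k - f\|_{C(\sigma(A))}\to 0$. By the isometry just noted, $(p_k(A))_k$ is Cauchy in $\mathcal{B}(H)$, and its limit is, by definition of the continuous functional calculus, the self-adjoint operator $f(A)$. Since $\mathcal{F}_{A,\mathcal{T}}$ is Banach-enriched, hence norm-continuous on hom-spaces, its continuous extension to $C(\sigma(A))$ is forced to send $f\mapsto \lim_k \mathcal{F}_{A,\mathcal{T}}(p_k) = \lim_k p_k(A) = f(A)$, and this extension is unique because $\mathbb{C}[z]$ is dense in $C(\sigma(A))$; this proves (i). (Real-valuedness of $f$ is exactly what makes $f(A)$ self-adjoint, so that $\sigma(f(A))\subset\mathbb{R}$.)

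Part (ii) is then the classical spectral mapping theorem applied via (i): the functional calculus $C(\sigma(A))\to\mathcal{B}(H)$, $g\mapsto g(A)$, is an isometric unital $*$-homomorphism onto $C^*(A,I)$, so $\sigma_{\mathcal{B}(H)}(f(A)) = \sigma_{C^*(A,I)}(f(A)) = \sigma_{C(\sigma(A))}(f) = f(\sigma(A))$, the last equality because the spectrum of an element of $C(X)$ is its range. (Elementarily: if $\mu\notin f(\sigma(A))$ then $(f-\mu)^{-1}\in C(\sigma(A))$ and its image inverts $f(A)-\mu I$; conversely $\mu = f(\lambda_0)$ with $\lambda_0\in\sigma(A)$ forces $f(A)-\mu I$ non-invertible by approximating and using openness of invertibility.) I do not expect a serious obstacle here: the single point requiring care is the passage from the polynomial multicategory to continuous inputs --- checking that norm-continuity of the Banach enrichment, the isometry $\|p(A)\| = \|p\|_{C(\sigma(A))}$, and density of polynomials together force a unique extension agreeing with the continuous functional calculus --- after which everything is standard $C^*$-theory.
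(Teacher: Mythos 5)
Your proposal is correct and follows essentially the same route as the paper's proof: identify $\mathcal{F}_{A,\mathcal{T}}$ with the polynomial functional calculus on unary morphisms, approximate $f$ by polynomials via Stone--Weierstrass, use the Banach enrichment (norm continuity) to pass to the limit and conclude $\mathcal{F}_{A,\mathcal{T}}(f)=f(A)$, then invoke the classical spectral mapping theorem for (ii). The only difference is expository: you are slightly more explicit than the paper that $\mathcal{F}_{A,\mathcal{T}}$ is a priori defined only on $\mathcal{C}_{\mathrm{poly}}(\sigma(A))$ and that its value on continuous $f$ is the unique norm-continuous extension forced by density and the isometry $\|p(A)\|=\|p\|_{C(\sigma(A))}$, a point the paper treats implicitly.
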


\begin{proof}
We prove the two parts systematically.

\medskip\noindent
\textbf{Proof of (i): Extension to Continuous Functional Calculus}

Let $f: \mathbb{R} \to \mathbb{R}$ be continuous. By the Stone-Weierstrass theorem, there exists a sequence of polynomials $\{P_n\}_{n\in\mathbb{N}}$ converging uniformly to $f$ on the compact set $\sigma(A)$.

Consider the sequence $\{\mathcal{F}_{A,\mathcal{T}}(P_n)\}_{n\in\mathbb{N}}$ in $\mathcal{B}(H)$. By condition (ii) of Theorem~\ref{thm:functorial-polynomial-spectral}, for polynomials we have:
\[
\mathcal{F}_{A,\mathcal{T}}(P_n) = P_n(A),
\]
where $P_n(A)$ denotes the continuous functional calculus.

Since $P_n \to f$ uniformly on $\sigma(A)$, the continuity of the functional calculus implies:
\[
P_n(A) \to f(A) \quad \text{in the operator norm}.
\]

Now, since $\mathcal{F}_{A,\mathcal{T}}$ is Banach-enriched, the map
\[
\varphi \mapsto \mathcal{F}_{A,\mathcal{T}}(\varphi)
\]
is continuous from $C(\sigma(A))$ (with supremum norm) to $\mathcal{B}(H)$ (with operator norm). Therefore:
\[
\mathcal{F}_{A,\mathcal{T}}(P_n) \to \mathcal{F}_{A,\mathcal{T}}(f) \quad \text{in } \mathcal{B}(H).
\]

But we also have $\mathcal{F}_{A,\mathcal{T}}(P_n) = P_n(A) \to f(A)$. By uniqueness of limits in $\mathcal{B}(H)$, we conclude:
\[
\mathcal{F}_{A,\mathcal{T}}(f) = f(A).
\]

\medskip\noindent
\textbf{Alternative Uniqueness Argument:}

For completeness, we also provide an alternative argument using the uniqueness clause of Theorem~\ref{thm:functorial-polynomial-spectral}. Define an auxiliary multifunctor $\mathcal{G}_{\mathcal{T}}: \mathcal{C}_{\mathrm{poly}}(\sigma(A)) \to \mathbf{HilbMult}$ by:
\begin{itemize}
    \item $\mathcal{G}_{\mathcal{T}}(\ast) = H$
    \item For polynomials $P$, define $\mathcal{G}_{\mathcal{T}}(P) = P(A)$ (the functional calculus)
    \item For general multimorphisms, define $\mathcal{G}_{\mathcal{T}}$ to satisfy the multifunctor axioms
\end{itemize}

One verifies that $\mathcal{G}_{\mathcal{T}}$ satisfies all conditions of Theorem~\ref{thm:functorial-polynomial-spectral}:
\begin{itemize}
    \item On objects: $\mathcal{G}_{\mathcal{T}}(\ast) = H$
    \item On linear morphisms: $\mathcal{G}_{\mathcal{T}}$ coincides with the functional calculus by definition
    \item The explicit formula and functoriality follow from the polynomial structure
\end{itemize}

By the uniqueness clause of Theorem~\ref{thm:functorial-polynomial-spectral}, we have $\mathcal{G}_{\mathcal{T}} = \mathcal{F}_{A,\mathcal{T}}$. In particular, for continuous functions $f$ (approximated by polynomials), we obtain $\mathcal{F}_{A,\mathcal{T}}(f) = f(A)$.

\medskip\noindent
\textbf{Proof of (ii): Spectral Mapping Property}

Since $\mathcal{F}_{A,\mathcal{T}}(f) = f(A)$ by part (i), the classical Spectral Mapping Theorem for the continuous functional calculus gives:
\[
\sigma(\mathcal{F}_{A,\mathcal{T}}(f)) = \sigma(f(A)) = f(\sigma(A)).
\]
This completes the proof.
\end{proof}

\begin{remark}
The proof shows that the specific choice of the family $\mathcal{T}$ does not affect the action on univariate continuous functions. This is because the polynomial compatibility condition forces $T_1$ to act as the identity, and the multifunctorial structure together with the Banach enrichment ensures that the extension from polynomials to continuous functions is unique and coincides with the classical continuous functional calculus.
\end{remark}

\subsection{Functorial Spectral Theorem Examples}

Below, we present two examples to reflect functorial spectral properties discussed in this section.

\begin{example}[Pointwise Multiplication Family]
\label{ex:pointwise-multiplication}
Consider the family $\mathcal{T}_{\text{mult}} = \{T_n^{\text{mult}}\}_{n \geq 1}$ defined by pointwise multiplication. To ensure boundedness on $H = L^2(\sigma(A), \mu)$, we define $T_n^{\text{mult}}$ on a dense subspace and extend:

On the dense subspace $H \cap L^\infty(\sigma(A), \mu) \cap L^{2n}(\sigma(A), \mu)$, define:
\[
T_n^{\text{mult}}(f_1, \dots, f_n)(x) = f_1(x) f_2(x) \cdots f_n(x).
\]

This family satisfies all the conditions of Theorem~\ref{thm:functorial-polynomial-spectral}:

\begin{enumerate}[label=(\alph*)]
    \item \textbf{Spectral Locality:} Take $\tau_n(x, z_1, \dots, z_n) = z_1 z_2 \cdots z_n$, which is clearly measurable.
    
    \item \textbf{Polynomial Compatibility:} For any polynomials $p_1, \dots, p_n \in \mathbb{C}[X]$ and $f_1, \dots, f_n \in H \cap L^\infty(\sigma(A), \mu)$, we have:
    \begin{align*}
    T_n^{\text{mult}}(p_1(A)f_1, \dots, p_n(A)f_n)(x) 
    &= [p_1(A)f_1](x) \cdot [p_2(A)f_2](x) \cdots [p_n(A)f_n](x) \\
    &= p_1(x)f_1(x) \cdot p_2(x)f_2(x) \cdots p_n(x)f_n(x) \\
    &= (p_1 p_2 \cdots p_n)(x) \cdot f_1(x) f_2(x) \cdots f_n(x) \\
    &= (p_1 \cdots p_n)(A) \left[T_n^{\text{mult}}(f_1, \dots, f_n)\right](x).
    \end{align*}
    
    \item \textbf{Uniform Boundedness:} For $f_1, \dots, f_n \in H \cap L^{2n}(\sigma(A), \mu)$, by Hölder's inequality:
    \begin{align*}
    \|T_n^{\text{mult}}(f_1, \dots, f_n)\|_{L^2}^2 
    &= \int_{\sigma(A)} |f_1(x) \cdots f_n(x)|^2  d\mu(x) \\
    &\leq \prod_{j=1}^n \left( \int_{\sigma(A)} |f_j(x)|^{2n} d\mu(x) \right)^{1/n} \\
    &= \prod_{j=1}^n \|f_j\|_{L^{2n}}^2.
    \end{align*}
    Thus $\|T_n^{\text{mult}}(f_1, \dots, f_n)\|_{L^2} \leq \prod_{j=1}^n \|f_j\|_{L^{2n}}$. 
    
    When $\mu(\sigma(A)) < \infty$, we have the embedding $L^\infty(\sigma(A), \mu) \hookrightarrow L^{2n}(\sigma(A), \mu)$ with:
    \[
    \|f\|_{L^{2n}} \leq \mu(\sigma(A))^{1/(2n)} \|f\|_{L^\infty},
    \]
    so on $H \cap L^\infty(\sigma(A), \mu)$:
    \[
    \|T_n^{\text{mult}}(f_1, \dots, f_n)\|_{L^2} \leq \mu(\sigma(A))^{1/2} \prod_{j=1}^n \|f_j\|_{L^\infty}.
    \]
    
    Each $T_n^{\text{mult}}$ extends uniquely to a bounded multilinear map on $H^n$ by density.
\end{enumerate}

For this family, the multifunctor $\mathcal{F}_{A,\mathcal{T}_{\text{mult}}}$ acts on a polynomial multimorphism $\varphi$ represented by $P(X_1, \dots, X_n)$ as:
\[
\mathcal{F}_{A,\mathcal{T}_{\text{mult}}}(\varphi)(f_1, \dots, f_n)(x) = P(x, \dots, x) \cdot f_1(x) f_2(x) \cdots f_n(x) \quad \text{for } x \in \sigma(A).
\]

This recovers the classical pointwise polynomial action and demonstrates that Theorem~\ref{thm:functorial-polynomial-spectral} generalizes the original construction while maintaining compatibility with the fundamental case of pointwise multiplication.
\end{example}

\begin{example}[Componentwise Addition Family]
\label{ex:componentwise-addition}
Consider the family $\mathcal{T}_{\text{add}} = \{T_n^{\text{add}}\}_{n \geq 1}$ defined by:
\[
T_n^{\text{add}}(f_1, \dots, f_n) = f_1 + f_2 + \cdots + f_n.
\]

This family also satisfies the theorem's conditions:

\begin{enumerate}[label=(\alph*)]
    \item \textbf{Spectral Locality:} Take $\tau_n(x, z_1, \dots, z_n) = z_1 + z_2 + \cdots + z_n$.
    
    \item \textbf{Polynomial Compatibility:} For polynomials $p_1, \dots, p_n$ and $f_1, \dots, f_n \in H \cap L^\infty(\sigma(A), \mu)$:
    \begin{align*}
    T_n^{\text{add}}(p_1(A)f_1, \dots, p_n(A)f_n) 
    &= p_1(A)f_1 + p_2(A)f_2 + \cdots + p_n(A)f_n \\
    &= (p_1 \cdots p_n)(A)(f_1 + f_2 + \cdots + f_n) \\
    &= (p_1 \cdots p_n)(A) T_n^{\text{add}}(f_1, \dots, f_n).
    \end{align*}
    
    \item \textbf{Uniform Boundedness:} $\|T_n^{\text{add}}\| \leq n$ by the triangle inequality.
\end{enumerate}

The resulting multifunctor acts as:
\[
\mathcal{F}_{A,\mathcal{T}_{\text{add}}}(\varphi)(f_1, \dots, f_n) = \sum_{k_1,\dots,k_n \geq 0} c_{k_1,\dots,k_n} (A^{k_1}f_1 + A^{k_2}f_2 + \cdots + A^{k_n}f_n),
\]
providing a linear rather than multiplicative realization of polynomial operations.
\end{example}

\begin{remark}
These examples demonstrate the flexibility of Theorem~\ref{thm:functorial-polynomial-spectral}. The pointwise multiplication family $\mathcal{T}_{\text{mult}}$ recovers the classical spectral theory interpretation, while the componentwise addition family $\mathcal{T}_{\text{add}}$ provides an alternative linear realization. Many other families satisfying the polynomial compatibility condition can be constructed, each yielding a different but mathematically valid extension of the functional calculus to the multicategorical setting.
\end{remark}

\section{Covariance and Universality in the Multifunctorial Framework}\label{sec:Covariance and Universality in the Multifunctorial Framework}

In this section, we formalize two foundational structural results concerning the multifunctorial representation of operator-theoretic data within the Banach-enriched symmetric monoidal multicategory framework. 

The first result, Proposition~\ref{prop:unitary-covariance}, establishes the \emph{covariance of the multifunctorial representation under unitary conjugation}. It shows that the semantic structure encoded by a self-adjoint operator is invariant, up to natural isomorphism, under changes of Hilbert space basis implemented by unitary transformations. This ensures that the multifunctor encoder of operator semantics behaves functorlly with respect to the equations of Hilbert spaces reflecting the p  physical and mathematical invariance.

Proposition~\ref{prop:universal-target} is the second main result in this section. It introduces a \emph{preliminary universal property} for the multicategory of Hilbert spaces $\mathbf{HilbMult}$. It identifies $\mathbf{HilbMult}$ as a canonical semantic target: under mild representability conditions, any Banach-enriched symmetric monoidal multicategory containing a self-adjoint element can receive a unique multifunctor from $\mathbf{HilbMult}$ that extends the usual functional calculus. This universality justifies viewing $\mathbf{HilbMult}$ as a ``universal semantic base’’ for self-adjoint operator semantics.

Together, Propositions~\ref{prop:unitary-covariance} and~\ref{prop:universal-target} establish both the \emph{equivariance} and the \emph{universality} aspects of the proposed framework: the former ensures structural consistency under isometric transformations, while the latter guarantees categorical completeness in representing self-adjoint dynamics.

\subsection{Covariance in the Multifunctorial Framework}

\begin{proposition}[Covariance under Unitary Conjugation]
\label{prop:unitary-covariance}
Let $H, H'$ be Hilbert spaces and $A \in \mathcal{B}(H)$, $A' \in \mathcal{B}(H')$ be bounded self-adjoint operators. Let $\mathcal{T} = \{T_n\}_{n \geq 1}$ and $\mathcal{T}' = \{T'_n\}_{n \geq 1}$ be families satisfying the conditions of Theorem~\ref{thm:functorial-polynomial-spectral} for $A$ and $A'$ respectively. 

If there exists a unitary operator $U: H \to H'$ such that:
\begin{itemize}
    \item $A' = U A U^*$
    \item $T'_n = U \circ T_n \circ (U^* \times \cdots \times U^*)$ for all $n \geq 1$
\end{itemize}
then the multifunctors $\mathcal{F}_{A,\mathcal{T}}: \mathcal{C}_{\mathrm{poly}}(\sigma(A)) \to \mathbf{HilbMult}$ and $\mathcal{F}_{A',\mathcal{T}'}: \mathcal{C}_{\mathrm{poly}}(\sigma(A')) \to \mathbf{HilbMult}$ are naturally isomorphic via conjugation by $U$.

More precisely, there exists a symmetric monoidal natural isomorphism $\eta: c_U \circ \mathcal{F}_{A,\mathcal{T}} \Rightarrow \mathcal{F}_{A',\mathcal{T}'}$, where $c_U: \mathbf{HilbMult} \to \mathbf{HilbMult}$ is the conjugation multifunctor defined by:
\begin{itemize}
    \item On objects: $c_U(K) = U(K)$
    \item On multimorphisms: For $T: K_1 \times \cdots \times K_n \to L$,
    \[
    c_U(T)(x_1,\dots,x_n) = U T(U^*x_1,\dots,U^*x_n)
    \]
\end{itemize}
\end{proposition}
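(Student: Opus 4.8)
The plan is to exploit the explicit polynomial formula of Theorem~\ref{thm:functorial-polynomial-spectral}(iii) and reduce the naturality of $\eta$ to a single intertwining computation. First I would record the elementary observation that unitary conjugation preserves the spectrum, so $\sigma(A') = \sigma(A)$ and hence $\mathcal{C}_{\mathrm{poly}}(\sigma(A')) = \mathcal{C}_{\mathrm{poly}}(\sigma(A))$ as multicategories; thus $\mathcal{F}_{A,\mathcal{T}}$ and $\mathcal{F}_{A',\mathcal{T}'}$ genuinely share a common domain. Next I would make precise the conjugation multifunctor: since $U$ is a fixed unitary $H \to H'$, the formula $c_U(K) = U(K)$ is to be read on the full sub-multicategory of $\mathbf{HilbMult}$ spanned by the tensor powers $\{H^{\otimes k}\}_{k \geq 0}$ (which contains the entire image of $\mathcal{F}_{A,\mathcal{T}}$), where $c_U$ acts on objects by $H^{\otimes k} \mapsto H'^{\otimes k}$ via $U^{\otimes k}$ and on a multimorphism $T$ by conjugation with the appropriate unitaries $U^{\otimes k}$. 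Since each $U^{\otimes k}$ is unitary, Lemma~\ref{lem:tensor_isometry} and Axiom~\ref{ax:H5} give at once that $c_U$ preserves operator norms, composition, identities, and the associators, unitors and braidings, hence is a Banach-enriched symmetric monoidal multifunctor onto its image. Because $\mathcal{C}_{\mathrm{poly}}(\sigma(A))$ has a single object $\ast$ with $(c_U \circ \mathcal{F}_{A,\mathcal{T}})(\ast) = U(H) = H' = \mathcal{F}_{A',\mathcal{T}'}(\ast)$, I would define $\eta$ to have the single component $\eta_\ast = \mathrm{id}_{H'}$.

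The heart of the argument is the naturality square for an arbitrary multimorphism $\varphi \in \mathrm{Hom}(\ast^n;\ast)$ represented by $P(X_1,\dots,X_n) = \sum_{\mathbf{k}} c_{\mathbf{k}} X_1^{k_1}\cdots X_n^{k_n}$; with $\eta_\ast = \mathrm{id}$ the multicategorical naturality condition collapses to the identity $c_U(\mathcal{F}_{A,\mathcal{T}}(\varphi)) = \mathcal{F}_{A',\mathcal{T}'}(\varphi)$. To prove it I would expand the left side using Theorem~\ref{thm:functorial-polynomial-spectral}(iii), obtaining $U \sum_{\mathbf{k}} c_{\mathbf{k}}\, T_n(A^{k_1}U^*x'_1,\dots,A^{k_n}U^*x'_n)$, then push $U^*$ through using the relation $A = U^*A'U$ (which follows from $A' = UAU^*$) in the form $A^{k}U^* = U^*(A')^{k}$ for every $k$, and finally invoke the hypothesis $T'_n = U \circ T_n \circ (U^*\times\cdots\times U^*)$ to arrive at $\sum_{\mathbf{k}} c_{\mathbf{k}}\, T'_n((A')^{k_1}x'_1,\dots,(A')^{k_n}x'_n)$, which is exactly $\mathcal{F}_{A',\mathcal{T}'}(\varphi)$ by applying Theorem~\ref{thm:functorial-polynomial-spectral}(iii) to the pair $(A',\mathcal{T}')$. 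Since every multimorphism of $\mathcal{C}_{\mathrm{poly}}(\sigma(A))$ is of this polynomial form, naturality holds verbatim. (The hypotheses in fact force $\mathcal{T}'$ to satisfy the conditions of Theorem~\ref{thm:functorial-polynomial-spectral} for $A'$, so $\mathcal{F}_{A',\mathcal{T}'}$ is well defined, but this is already granted.)

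To conclude, $\eta_\ast = \mathrm{id}_{H'}$ is trivially invertible, so $\eta$ is a natural isomorphism, and its compatibility with the symmetric monoidal structure reduces — again because $\eta_\ast$ is the identity — to the fact, established above, that $c_U$ intertwines the coherence isomorphisms, which holds since every $U^{\otimes k}$ is unitary and all structural isomorphisms of $\mathbf{HilbMult}$ are isometric (Axiom~\ref{ax:H5}). The step I expect to require the most care is not any single computation but the bookkeeping of the first paragraph: making the abuse in the definition of $c_U$ precise by restricting to the sub-multicategory of tensor powers of $H$, and verifying cleanly that conjugation by the family $\{U^{\otimes k}\}$ is a symmetric monoidal multifunctor compatible with flattening of input lists. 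Once that is in place, the naturality identity is a short intertwining calculation and the monoidal coherence of $\eta$ is immediate.
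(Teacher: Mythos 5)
Your proposal is correct and follows essentially the same route as the paper: both verify that conjugation by $U$ is a symmetric monoidal multifunctor, reduce the claim to the same intertwining computation $U\,T_n(A^{k_1}U^*x_1,\dots,A^{k_n}U^*x_n) = T'_n(A'^{k_1}x_1,\dots,A'^{k_n}x_n)$ via $A^kU^* = U^*(A')^k$ and the hypothesis on $\mathcal{T}'$, and conclude with the identity natural transformation. The only cosmetic difference is that the paper finishes by invoking the uniqueness clause of Theorem~\ref{thm:functorial-polynomial-spectral} to identify $c_U \circ \mathcal{F}_{A,\mathcal{T}}$ with $\mathcal{F}_{A',\mathcal{T}'}$, whereas you compare the explicit formulas of part (iii) directly; your added care about $\sigma(A)=\sigma(A')$ (so the domains coincide) and about making $c_U$ precise on tensor powers of $H$ addresses points the paper leaves implicit.
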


\begin{proof}
We proceed in four main steps.

\medskip\noindent
\textbf{Step 1: The conjugation multifunctor $c_U$}

We verify that $c_U$ is a symmetric monoidal multifunctor.

\emph{Composition preservation}: Let $T: L_1 \times \cdots \times L_m \to M$ and $S_j: K_{j,1} \times \cdots \times K_{j,n_j} \to L_j$ for $j=1,\dots,m$. For any inputs $x_{j,k} \in K_{j,k}$:
\begin{align*}
&c_U(T \circ (S_1,\dots,S_m))(x_{1,1},\dots,x_{m,n_m}) \\
&= U \left[ T(S_1(U^*x_{1,1},\dots,U^*x_{1,n_1}), \dots, S_m(U^*x_{m,1},\dots,U^*x_{m,n_m})) \right] \\
&= U \left[ T(U^*c_U(S_1)(x_{1,1},\dots,x_{1,n_1}), \dots, U^*c_U(S_m)(x_{m,1},\dots,x_{m,n_m})) \right] \\
&= c_U(T)(c_U(S_1)(x_{1,1},\dots,x_{1,n_1}), \dots, c_U(S_m)(x_{m,1},\dots,x_{m,n_m})) \\
&= [c_U(T) \circ (c_U(S_1),\dots,c_U(S_m))](x_{1,1},\dots,x_{m,n_m}).
\end{align*}

\emph{Identity preservation}: For $\mathrm{id}_H: H \to H$:
\[
c_U(\mathrm{id}_H)(x) = U(\mathrm{id}_H(U^*x)) = UU^*x = x = \mathrm{id}_{H'}(x).
\]

\emph{Symmetric monoidal structure}: The functor preserves tensor products and symmetry isomorphisms since $U$ is unitary and the construction is natural.

\medskip\noindent
\textbf{Step 2: The composition $c_U \circ \mathcal{F}_{A,\mathcal{T}}$}

Since both $c_U$ and $\mathcal{F}_{A,\mathcal{T}}$ are symmetric monoidal multifunctors, their composition $c_U \circ \mathcal{F}_{A,\mathcal{T}}: \mathcal{C}_{\mathrm{poly}}(\sigma(A)) \to \mathbf{HilbMult}$ is also a symmetric monoidal multifunctor.

\medskip\noindent
\textbf{Step 3: Verification of spectral conditions}

We verify that $c_U \circ \mathcal{F}_{A,\mathcal{T}}$ satisfies the conditions of Theorem~\ref{thm:functorial-polynomial-spectral} for operator $A'$ with family $\mathcal{T}'$.

\begin{enumerate}[label=(\roman*)]
    \item \textbf{Object assignment}:
    \[
    (c_U \circ \mathcal{F}_{A,\mathcal{T}})(\ast) = c_U(H) = H'.
    \]
    
    \item \textbf{Functional calculus compatibility}: For any polynomial $P$:
    \begin{align*}
    (c_U \circ \mathcal{F}_{A,\mathcal{T}})(P) &= c_U(\mathcal{F}_{A,\mathcal{T}}(P)) \\
    &= c_U(P(A)) \quad \text{(by Theorem~\ref{thm:functorial-polynomial-spectral}(ii))} \\
    &= U P(A) U^* = P(U A U^*) = P(A').
    \end{align*}
    
    \item \textbf{Multinomial action}: For $\varphi \in \Hom_{\mathcal{C}_{\mathrm{poly}}(\sigma(A))}(\ast^n; \ast)$ with $\varphi(f_1,\dots,f_n) = P(f_1,\dots,f_n)$:
    \begin{align*}
    &(c_U \circ \mathcal{F}_{A,\mathcal{T}})(\varphi)(x_1,\dots,x_n) \\
    &= U \left[ \mathcal{F}_{A,\mathcal{T}}(\varphi)(U^*x_1,\dots,U^*x_n) \right] \\
    &= U \left[ \sum_{\mathbf{k}} c_{\mathbf{k}} \, T_n(A^{k_1}U^*x_1, \dots, A^{k_n}U^*x_n) \right] \\
    &= \sum_{\mathbf{k}} c_{\mathbf{k}} \, U T_n(U^*U A^{k_1}U^*x_1, \dots, U^*U A^{k_n}U^*x_n) \\
    &= \sum_{\mathbf{k}} c_{\mathbf{k}} \, (U \circ T_n \circ (U^* \times \cdots \times U^*))(U A^{k_1}U^*x_1, \dots, U A^{k_n}U^*x_n) \\
    &= \sum_{\mathbf{k}} c_{\mathbf{k}} \, T'_n(A'^{k_1}x_1, \dots, A'^{k_n}x_n).
    \end{align*}
    
    \item \textbf{Functoriality}: Inherited from the composition of multifunctors.
\end{enumerate}

\medskip\noindent
\textbf{Step 4: Natural isomorphism}

Since $c_U \circ \mathcal{F}_{A,\mathcal{T}}$ satisfies all conditions of Theorem~\ref{thm:functorial-polynomial-spectral} for operator $A'$ with family $\mathcal{T}'$, and $\mathcal{F}_{A',\mathcal{T}'}$ is the unique such multifunctor, we conclude:
\[
c_U \circ \mathcal{F}_{A,\mathcal{T}} = \mathcal{F}_{A',\mathcal{T}'}.
\]

The identity natural transformation $\eta: c_U \circ \mathcal{F}_{A,\mathcal{T}} \Rightarrow \mathcal{F}_{A',\mathcal{T}'}$ provides the required symmetric monoidal natural isomorphism, completing the proof.
\end{proof}

\begin{remark}
The condition $T'_n = U \circ T_n \circ (U^* \times \cdots \times U^*)$ ensures that the family $\mathcal{T}$ transforms covariantly under unitary conjugation. In particular, if both $H$ and $H'$ are represented as $L^2$ spaces in a way that makes $U$ an isomorphism of function spaces preserving the pointwise product structure, and if we take $\mathcal{T}$ and $\mathcal{T}'$ to be the families of pointwise multiplication maps, then this condition is automatically satisfied.
\end{remark}


\subsection{Universality in the Multifunctorial Framework}

We now formalize the analytic structure underlying Banach-enriched symmetric monoidal multicategories, which generalize operator-norm behavior from Hilbert spaces to enriched categorical settings.

\begin{definition}[Norm in a Banach-Enriched Symmetric Monoidal Multicategory]
\label{def:banach-enriched-norm}
Let $\mathcal{M}$ be a Banach-enriched symmetric monoidal multicategory.  
For each tuple of objects $(A_1, \dots, A_n; B)$, the hom-object
\[
\mathcal{M}(A_1, \dots, A_n; B)
\]
is a Banach space over $\mathbb{C}$ (or $\mathbb{R}$).  
Each multimorphism $a \in \mathcal{M}(A_1, \dots, A_n; B)$ is assigned the Banach-space norm
\[
\|a\|_{\mathcal{M}} := \|a\|_{\mathcal{M}(A_1, \dots, A_n; B)}.
\]

The enrichment requires the following compatibility conditions:
\begin{enumerate}
    \item \textbf{Composition boundedness:}  
    For composable multimorphisms
    \[
    a \in \mathcal{M}(B_1, \dots, B_m; C),
    \quad b_j \in \mathcal{M}(A_{j,1}, \dots, A_{j,n_j}; B_j),
    \]
    there exists a constant $K \ge 0$ (typically $K=1$ in the contractive case) such that
    \[
    \|a \circ (b_1, \dots, b_m)\|_{\mathcal{M}}
    \le K \, \|a\|_{\mathcal{M}} \, \prod_{j=1}^m \|b_j\|_{\mathcal{M}}.
    \]

    \item \textbf{Tensor boundedness:}  
    The tensor product operation
    \[
    \otimes : \mathcal{M}(A_1, \dots, A_n; B) \times \mathcal{M}(C_1, \dots, C_m; D)
    \to \mathcal{M}(A_1, \dots, A_n, C_1, \dots, C_m; B \otimes D)
    \]
    is a bounded bilinear map with
    \[
    \|a \otimes b\|_{\mathcal{M}} \le K_\otimes \, \|a\|_{\mathcal{M}} \, \|b\|_{\mathcal{M}}.
    \]

    \item \textbf{Symmetry and unit isometries:}  
    All symmetry isomorphisms
    \[
    \sigma_{\pi}: \mathcal{M}(A_1, \dots, A_n; B)
    \to \mathcal{M}(A_{\pi(1)}, \dots, A_{\pi(n)}; B),
    \]
    and the unit isomorphisms for the tensor unit $I$, are isometric.
\end{enumerate}
When $K = K_\otimes = 1$, the enrichment is said to be \emph{contractive}.
\end{definition}

\begin{remark}
The quantity $\|a\|_{\mathcal{M}}$ generalizes the operator norm: it measures the ``amplification factor'' of a multimorphism.  
In many natural examples, such as $\mathbf{HilbMult}$, the enrichment is contractive, ensuring that composition or tensoring never increases the operator magnitude.
\end{remark}

\begin{example}[The Category $\mathbf{HilbMult}$]
In the multicategory $\mathbf{HilbMult}$ of Hilbert spaces:
\begin{itemize}
    \item Objects are Hilbert spaces $H, K, \dots$.
    \item $\mathbf{HilbMult}(H_1, \dots, H_n; K)$ consists of bounded multilinear maps $T: H_1 \times \cdots \times H_n \to K$.
    \item The norm is the operator norm:
    \[
    \|T\|_{\mathbf{HilbMult}}
    = \sup_{\substack{x_i \neq 0}}
    \frac{\|T(x_1, \dots, x_n)\|_K}{\|x_1\|_{H_1} \cdots \|x_n\|_{H_n}}.
    \]
    \item Composition is contractive, and the tensor product is isometric:
    \[
    \|S \circ (T_1, \dots, T_m)\| \le \|S\| \prod_{j=1}^m \|T_j\|,
    \qquad
    \|T \otimes T'\| = \|T\| \, \|T'\|.
    \]
\end{itemize}
\end{example}

\begin{definition}[Consistency under Diagonalization and Embedding]
\label{def:consistency}
Let $\{a^{(n)}\}_{n \ge 1}$ be a family of multimorphisms with
$a^{(n)} \in \mathrm{Hom}_{\mathcal{M}}(X^n; X)$.  
The family is \emph{consistent under diagonalization and embedding} if:
\begin{enumerate}
    \item \textbf{Diagonalization:}  
    For any function $\rho : \{1, \dots, m\} \to \{1, \dots, n\}$,
    \[
    a^{(m)} = a^{(n)} \circ (p_1, \dots, p_n),
    \]
    where each $p_j : X^m \to X$ is $\mathrm{id}_X$ if $j \in \mathrm{im}(\rho)$
    and is constant otherwise.  
    This ensures that repeating or omitting arguments yields coherent results.

    \item \textbf{Embedding:}  
    For any $1 \le k \le n$,
    \[
    a^{(n)} = a^{(k)} \circ (\mathrm{id}_{X^k}, \varepsilon, \dots, \varepsilon),
    \]
    where $\varepsilon : X \to I$ are the counit or terminal morphisms.
\end{enumerate}
\end{definition}

\begin{definition}[Bounded Families]
\label{def:boundedness}
A family $\{a^{(n)}\}_{n \ge 1}$ in a Banach-enriched multicategory $\mathcal{M}$ is \emph{bounded} if:
\begin{enumerate}
    \item $\|a^{(n)}\|_{\mathcal{M}} < \infty$ for each $n$.
    \item There exists $C > 0$ such that $\|a^{(n)}\|_{\mathcal{M}} \le C^n$ for all $n$.
\end{enumerate}
In contractive enrichments, this property is automatically preserved under composition.
\end{definition}

\begin{definition}[Spectrum and Spectral Radius]
\label{def:spectrum}
Let $f \in \mathrm{Hom}_{\mathcal{M}}(X; X)$ be a unary endomorphism.
\begin{itemize}
    \item The \emph{spectrum} of $f$ is
    \[
    \sigma_{\mathcal{M}}(f)
    := \{\lambda \in \mathbb{C} : (f - \lambda \, \mathrm{id}_X)
    \text{ is not invertible under } \circ \}.
    \]
    \item The \emph{spectral radius} is
    \[
    r_{\mathcal{M}}(f)
    := \sup\{|\lambda| : \lambda \in \sigma_{\mathcal{M}}(f)\}
    = \lim_{n \to \infty} \|f^n\|_{\mathcal{M}}^{1/n}.
    \]
\end{itemize}
\end{definition}

\begin{remark}
In any Banach-enriched multicategory:
\begin{itemize}
    \item $\sigma_{\mathcal{M}}(f)$ is nonempty and compact;
    \item $r_{\mathcal{M}}(f) \le \|f\|_{\mathcal{M}}$;
    \item the spectral mapping theorem holds:
    $\sigma_{\mathcal{M}}(P(f)) = P(\sigma_{\mathcal{M}}(f))$ for any polynomial $P$;
    \item if $f$ is self-adjoint, then $\sigma_{\mathcal{M}}(f) \subset \mathbb{R}$.
\end{itemize}
\end{remark}

\begin{definition}[Spectral Compatibility]
\label{def:spectral-compatibility}
A family $\{a^{(n)}\}_{n \ge 1}$ in a Banach-enriched multicategory $\mathcal{M}$ is \emph{spectrally compatible} if:
\begin{enumerate}
    \item The diagonalization $A_n := a^{(n)} \circ \Delta_n$ (where $\Delta_n : X \to X^n$ is the diagonal) defines a unary endomorphism.
    \item Each $A_n$ has real spectrum: $\sigma_{\mathcal{M}}(A_n) \subset \mathbb{R}$.
    \item There exists $R > 0$ with
    $\sigma_{\mathcal{M}}(A_n) \subset [-R, R]$ for all $n$.
    \item The spectral radii are uniformly bounded:
    $\sup_{n \ge 1} r_{\mathcal{M}}(A_n) < \infty$.
\end{enumerate}
\end{definition}

\begin{definition}[Functional Calculus Compatibility]
\label{def:functional-calculus-compatibility}
A family $\{a^{(n)}\}_{n \ge 1}$ with $a^{(n)} \in \mathrm{Hom}_{\mathcal{M}}(X^n; X)$
is \emph{functionally calculus compatible} if, for every polynomial
\[
P(z_1, \dots, z_n)
= \sum_{\mathbf{k}} c_{\mathbf{k}} z_1^{k_1} \cdots z_n^{k_n},
\]
the corresponding $n$-ary morphism is given by
\[
a^{(n)}
= \sum_{\mathbf{k}} c_{\mathbf{k}}
\cdot \mu_n \circ
\Bigl(\bigotimes_{i=1}^n (a^{(1)})^{k_i}\Bigr),
\]
where $\mu_n : X^n \to X$ is the $n$-ary multiplication morphism.
\end{definition}

\begin{remark}
This condition ensures that each $a^{(n)}$ coincides with the polynomial operation generated by the unary operator $a^{(1)}$, respecting the monoidal and multilinear structure of $\mathcal{M}$.
\end{remark}

\begin{remark}[Synthesis]\label{rmk:four compatibility conditions}
The four compatibility conditions jointly ensure that a family $\{a^{(n)}\}$ in $\mathcal{M}$ behaves analogously to a coherent multilinear operator calculus in $\mathbf{HilbMult}$:
\begin{itemize}
    \item \textbf{Consistency} (Def.~\ref{def:consistency}) encodes structural coherence under input duplication or omission.
    \item \textbf{Boundedness} (Def.~\ref{def:boundedness}) provides analytic control over the operator norms.
    \item \textbf{Spectral compatibility} (Def.~\ref{def:spectral-compatibility}) preserves self-adjointness and spectral bounds.
    \item \textbf{Functional calculus compatibility} (Def.~\ref{def:functional-calculus-compatibility}) ensures the entire family is generated polynomially from $a^{(1)}$.
\end{itemize}
Together they guarantee that the family extends the Hilbert-space operator calculus to any Banach-enriched symmetric monoidal multicategory.
\end{remark}

\begin{proposition}[Universal Property of $\mathbf{HilbMult}$ for Operator Calculus]
\label{prop:universal-target}
Let $\mathbf{HilbMult}$ denote the Banach-enriched symmetric monoidal multicategory of Hilbert spaces with bounded multilinear maps, and let $\mathcal{M}$ be another Banach-enriched symmetric monoidal multicategory.

Suppose we are given the following data:
\begin{itemize}
    \item A Hilbert space $H \in \mathbf{HilbMult}$,
    \item A bounded self-adjoint operator $A \in \mathcal{B}(H)$ with $\sigma(A) \subset \mathbb{R}$,
    \item A family $\mathcal{T} = \{T_n\}_{n \ge 1}$ satisfying Theorem~\ref{thm:functorial-polynomial-spectral},
    \item An object $X \in \mathcal{M}$,
    \item A compatible family $\{a^{(n)}\}_{n \ge 1}$ with $a^{(n)} \in \mathrm{Hom}_{\mathcal{M}}(X^n; X)$ satisfying the four compatibility conditions given by Remark~\ref{rmk:four compatibility conditions}.
\end{itemize}

Then there exists a unique Banach-enriched symmetric monoidal multifunctor
\[
F \colon \mathbf{HilbMult} \longrightarrow \mathcal{M}
\]
such that:
\begin{enumerate}[label=(\roman*)]
    \item $F(H) = X$;
    \item For the functional calculus multifunctor $\mathcal{F}_{A,\mathcal{T}}$ of Theorem~\ref{thm:functorial-polynomial-spectral}, we have
    \[
    F\big(\mathcal{F}_{A,\mathcal{T}}(\varphi)\big) = a^{(n)}_{\varphi}
    \quad \text{for all polynomials } \varphi;
    \]
    \item $F$ preserves multicategorical composition and the symmetric monoidal structure.
\end{enumerate}
\end{proposition}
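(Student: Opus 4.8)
The plan is to determine $F$ on \emph{all} of $\mathbf{HilbMult}$ by a forced-value argument and then verify the multifunctor axioms, so that uniqueness and existence are established simultaneously. First I would record exactly what conditions (i)--(iii) force. Unit preservation (part of the symmetric monoidal requirement (iii)) forces $F(\mathbb{C}) = I$, the tensor unit of $\mathcal{M}$; condition (i) gives $F(H) = X$; preservation of $\otimes$ then forces $F(K_1 \otimes \cdots \otimes K_r) = F(K_1) \otimes \cdots \otimes F(K_r)$, and in particular $F(H^{\otimes k}) = X^{\otimes k}$. On multimorphisms, condition (ii) pins down $F$ on every morphism $\mathcal{F}_{A,\mathcal{T}}(\varphi)\colon H^n \to H$ with $\varphi$ polynomial, via $F(\mathcal{F}_{A,\mathcal{T}}(\varphi)) = a^{(n)}_{\varphi}$. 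The strategy is that these forced values, propagated through the enriched monoidal structure, leave no freedom anywhere in $\mathbf{HilbMult}$.

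For existence I would extend the forced values outward in three closures, at each stage invoking that $F$ must be a Banach-enriched symmetric monoidal multifunctor. (1) \emph{Norm closure}: by Lemma~\ref{lem:completeness} each hom-space is complete, by Stone--Weierstrass the polynomials are dense in $C(\sigma(A))$, and since $F$ must be bounded on hom-spaces, the assignment on polynomials extends continuously and uniquely to the closed span it generates inside each $\mathrm{Hom}(H^n;H)$, matching $\{a^{(n)}\}$ by functional-calculus compatibility (Remark~\ref{rmk:four compatibility conditions}). (2) \emph{Compositional closure}: multifunctoriality $F(S\circ(T_1,\dots,T_m)) = F(S)\circ(F(T_1),\dots,F(T_m))$ forces $F$ on all composites, with norms controlled by the contractivity of Theorem~\ref{thm:banach-enrichment}(ii) and the boundedness condition of Remark~\ref{rmk:four compatibility conditions}. (3) \emph{Monoidal closure}: tensor preservation together with Lemma~\ref{lem:tensor_isometry} forces $F$ on all tensor products of assigned morphisms. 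To reach objects and multimorphisms \emph{not} built from $H$ directly, I would use the closed/representable structure of Axiom~\ref{ax:H6} and Theorem~\ref{thm:representability-currying}: currying reduces every bounded multilinear map to a unary morphism into an internal-hom object, so that the values of $F$ on arbitrary objects and multimorphisms are determined by its values anchored at $H$, the unit, tensor, and the internal-hom construction.

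Uniqueness then follows directly: any $F'$ satisfying (i)--(iii) agrees with $F$ on $\mathbb{C}$, on $H$ and its tensor powers, and on the polynomial functional-calculus image by (ii); by the three closures above and continuity in the Banach norm, the locus where $F' = F$ is closed under norm-limits, composition, and tensor, and hence equals all of $\mathbf{HilbMult}$. Well-definedness of the construction --- that $F$'s value on a multimorphism is independent of how it is exhibited as a limit, composite, or tensor of generators --- is checked against the coherence data of Axiom~\ref{ax:H5} (pentagon, hexagon, triangle) and the naturality of currying in Theorem~\ref{thm:representability-currying}(iii), which guarantee that any two presentations differ by structural isometries that $F$ sends to the corresponding isometries of $\mathcal{M}$.

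I expect the main obstacle to be precisely this global coherence and well-definedness step: proving that the forced values propagate \emph{consistently} to every object and every multimorphism of $\mathbf{HilbMult}$, and in particular that the reduction of arbitrary Hilbert spaces and arbitrary bounded multilinear maps to the distinguished data at $H$ through currying and tensor decomposition is canonical and respects all multicategorical relations. The analytic inputs --- density via Stone--Weierstrass, completeness of hom-spaces (Lemma~\ref{lem:completeness}), contractivity of composition (Theorem~\ref{thm:banach-enrichment}), and isometry of $\otimes$ (Lemma~\ref{lem:tensor_isometry}) --- are routine given the earlier results. The delicate part is establishing that the four compatibility conditions of Remark~\ref{rmk:four compatibility conditions} are strong enough to make the induced morphism assignment single-valued and structure-preserving across the \emph{entire} multicategory, so that existence and uniqueness of $F$ hold on all of $\mathbf{HilbMult}$ and not merely on the sub-multicategory generated by the given data.
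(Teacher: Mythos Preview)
Your overall skeleton---pin down $F$ on $\mathbb{C}$, $H$, tensor powers, and the functional-calculus image by forced values, then propagate through composition and tensor, then argue uniqueness---is exactly the paper's approach. Two points of divergence are worth noting.

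First, your norm-closure step via Stone--Weierstrass is extra machinery that the paper does not use and the statement does not require: condition~(ii) only demands agreement on \emph{polynomial} $\varphi$, and $\mathcal{F}_{A,\mathcal{T}}$ is a multifunctor out of $\mathcal{C}_{\mathrm{poly}}(\sigma(A))$, so the paper works entirely at the polynomial level and never takes limits in the hom-spaces. Your density argument does no harm, but it is not needed for what is being proved.

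Second, for the extension to objects and morphisms not built from $H$, you propose using the closed structure and currying (Axiom~\ref{ax:H6}, Theorem~\ref{thm:representability-currying}); the paper instead simply asserts that any Hilbert space $K$ admits a decomposition $K \cong H^{\otimes n} \otimes \mathbb{C}^m$ and defines $F(K)$ accordingly. Your mechanism is more principled in spirit, but currying only reduces multimorphisms to unary morphisms \emph{into internal-hom objects}, so it does not by itself determine $F$ on a Hilbert space $K$ that is genuinely unrelated to $H$; the paper's decomposition claim is likewise not generally valid. In practice the paper sidesteps this by restricting the recursive definition of $F$ on morphisms to the sub-multicategory $\mathbf{HilbMult}_{\mathcal{T}}$ generated by the $\mathcal{F}_{A,\mathcal{T}}(\varphi)$, identities, and structural isomorphisms, closed under composition and tensor---which is precisely the locus on which the forced values propagate consistently. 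You correctly identify this global extension as the main obstacle; the paper's proof does not resolve it beyond working on the generated sub-multicategory, and that is the honest scope of the construction.
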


\begin{proof}
We define $F$ explicitly and verify that it satisfies the claimed properties.

\medskip
\noindent
\textbf{Step 1: Definition on Objects.}
\begin{itemize}
    \item Set $F(H) = X$ and $F(\mathbb{C}) = I_{\mathcal{M}}$ (the unit object).
    \item For tensor products, define
    \[
    F(H_1 \otimes \cdots \otimes H_k)
    := F(H_1) \otimes \cdots \otimes F(H_k).
    \]
    \item For any other Hilbert space $K$, choose a decomposition
    $K \cong H^{\otimes n} \otimes \mathbb{C}^m$ and define
    $F(K) := F(H)^{\otimes n} \otimes I_{\mathcal{M}}^{\otimes m}$.
\end{itemize}
This is well-defined up to the canonical isomorphisms of the symmetric monoidal structure.

\medskip
\noindent
\textbf{Step 2: Definition on Multimorphisms.}

Let $\mathbf{HilbMult}_{\mathcal{T}}$ denote the smallest submulticategory of $\mathbf{HilbMult}$ containing:
\begin{itemize}
    \item all functional calculus operations $\mathcal{F}_{A,\mathcal{T}}(\varphi)$ for polynomials $\varphi$,
    \item the identity morphisms,
    \item and all structural isomorphisms of the symmetric monoidal structure,
\end{itemize}
and closed under composition and tensor products.

We define $F$ recursively on $\mathbf{HilbMult}_{\mathcal{T}}$ by:
\begin{align*}
F(\mathcal{F}_{A,\mathcal{T}}(\varphi)) &:= a^{(n)}_{\varphi}, \\
F(\mathrm{id}_{H^{\otimes n}}) &:= \mathrm{id}_{X^{\otimes n}}, \\
F(\psi \circ (\psi_1, \ldots, \psi_m)) &:= F(\psi) \circ (F(\psi_1), \ldots, F(\psi_m)), \\
F(\psi \otimes \psi') &:= F(\psi) \otimes F(\psi'), \\
F(\sigma_{K,L}) &:= \sigma_{F(K),F(L)},
\end{align*}
and similarly for associators, unitors, and other coherence morphisms.

\medskip
\noindent
\textbf{Step 3: Well-definedness.}

If $\psi$ admits multiple representations via composition or tensoring, we must show the assigned $F(\psi)$ is independent of the choice.

\emph{(a) Composition coherence.}
Suppose
\[
\psi = \mathcal{F}_{A,\mathcal{T}}(P) \circ (\mathcal{F}_{A,\mathcal{T}}(Q_1), \dots, \mathcal{F}_{A,\mathcal{T}}(Q_m)).
\]
Then
\[
F(\psi)
= a^{(m)}_P \circ (a^{(n_1)}_{Q_1}, \dots, a^{(n_m)}_{Q_m})
= a^{(N)}_{P \circ (Q_1, \dots, Q_m)}
= F\big(\mathcal{F}_{A,\mathcal{T}}(P \circ (Q_1, \dots, Q_m))\big),
\]
using the functional calculus compatibility condition.

\emph{(b) Symmetric monoidal coherence.}
All coherence morphisms are preserved by definition of $F$ on structural maps.

\emph{(c) Object representation coherence.}
If $K \cong H^{\otimes n} \cong H^{\otimes m}$ via different isomorphisms, the corresponding $F(K)$ are canonically isomorphic in $\mathcal{M}$, and these isomorphisms are preserved by $F$.

\medskip
\noindent
\textbf{Step 4: Verification of Multifunctor Laws.}

\emph{Identity:}
$F(\mathrm{id}_H) = a^{(1)}_{\mathrm{id}} = \mathrm{id}_X$ by compatibility.

\emph{Composition and Tensor:}
These preservation laws are built into the recursive definition.

\emph{Symmetric Monoidal Structure:}
All structural isomorphisms are preserved by construction.

\medskip
\noindent
\textbf{Step 5: Banach Enrichment.}

\emph{Linearity.}
The assignment $\mathcal{F}_{A,\mathcal{T}}(\varphi) \mapsto a^{(n)}_{\varphi}$ is linear, and composition/tensoring in both $\mathbf{HilbMult}$ and $\mathcal{M}$ are multilinear.

\emph{Boundedness.}
By assumption, $\|a^{(n)}_{\varphi}\| \le C \|\varphi\|$ for some $C>0$, and the tensor and composition operations are bounded in both multicategories. Hence an inductive argument shows
\[
\|F(\psi)\|_{\mathcal{M}} \le C'\|\psi\|_{\mathbf{HilbMult}}
\]
for some constant $C'$.

\medskip
\noindent
\textbf{Step 6: Uniqueness.}

Suppose $G \colon \mathbf{HilbMult} \to \mathcal{M}$ is another Banach-enriched symmetric monoidal multifunctor satisfying (i)--(iii). Then:
\begin{itemize}
    \item $G(H)=X=F(H)$ and both preserve the tensor structure, so $G(K)=F(K)$ for all objects $K$.
    \item $G(\mathcal{F}_{A,\mathcal{T}}(\varphi)) = a^{(n)}_{\varphi} = F(\mathcal{F}_{A,\mathcal{T}}(\varphi))$ for all $\varphi$.
    \item By preservation of composition and tensoring, $G(\psi)=F(\psi)$ for all $\psi$.
\end{itemize}
Thus $F=G$ uniquely.
\end{proof}

\begin{remark}[Universal Characterization]
\label{rem:universal-property-uniqueness}
Proposition~\ref{prop:universal-target} identifies $\mathbf{HilbMult}$ as the \emph{canonical domain} for multilinear operator calculus. Any Banach-enriched symmetric monoidal multicategory $\mathcal{M}$ that supports an interpretation $\{a^{(n)}\}$ of the operator calculus of a self-adjoint operator $A$ admits a unique structure-preserving multifunctor
\[
F \colon \mathbf{HilbMult} \longrightarrow \mathcal{M}
\]
that respects this interpretation.

Uniqueness is enforced at three levels:
\begin{enumerate}
    \item \textbf{Object level:} $F(H)=X$, fixing all tensor powers of $H$;
    \item \textbf{Generator level:} $F(\mathcal{F}_{A,\mathcal{T}}(\varphi))=a^{(n)}_{\varphi}$ for polynomial $\varphi$;
    \item \textbf{Structural level:} Preservation of composition and tensoring extends $F$ uniquely to all morphisms generated from these.
\end{enumerate}
\end{remark}

\begin{remark}[Scope and Interpretation]
\label{rem:scope-interpretation}
This universal property expresses a form of \emph{targeted semantic completeness}: $\mathbf{HilbMult}$ fully captures the multilinear operator calculus generated by a self-adjoint operator and its functional calculus family $\mathcal{T}$.

Any interpretation in another Banach-enriched symmetric monoidal multicategory $\mathcal{M}$ factors uniquely through $\mathbf{HilbMult}$, provided the family $\{a^{(n)}\}$ satisfies:
\begin{enumerate}
    \item \textbf{Consistency:} $a^{(1)}_{\mathrm{id}} = \mathrm{id}_X$ and $a^{(1)}_z = z\,\mathrm{id}_X$ for scalars $z$;
    \item \textbf{Boundedness:} $\|a^{(n)}_{\varphi}\| \le C\|\varphi\|$;
    \item \textbf{Spectral compatibility:} $\sigma(a^{(n)}_{\varphi}) \subseteq \varphi(\sigma(A)^n)$;
    \item \textbf{Functional calculus compatibility:} $a^{(N)}_{P\circ(Q_1,\dots,Q_m)} = a^{(m)}_P \circ (a^{(n_1)}_{Q_1},\dots,a^{(n_m)}_{Q_m})$.
\end{enumerate}
Thus $\mathbf{HilbMult}$ serves as a universal environment for interpreting operator calculus constructions.
\end{remark}

\begin{remark}[Practical Significance]
\label{rem:practical-significance}
This universal property carries several important consequences. First, it ensures a strong form of model independence: any result established within $\mathbf{HilbMult}$ automatically holds in every Banach-enriched symmetric monoidal multicategory $\mathcal{M}$ that interprets the same operator calculus, since such interpretations factor uniquely through $\mathbf{HilbMult}$. Second, it has a clear effect on the theoretical representation—different families of ${a^{(n)}}$ conformations correspond to different class representations of the computation of underlying operators, similar to representations of algebra in different units. Finally, the construction provides a generalizing framework that rigorously extends the operator-theoretic principles of Hilbert spaces to broader, rich categorical contexts while preserving analytic and synthetic structure.
\end{remark}

\section{Examples}\label{sec:Examples}


In this section, we will present three examples to illustrate theory proposed in this work. 

\begin{example}[Finite-Dimensional Matrix Case]
\label{ex:finite-dimensional-matrices}
Let $H = \mathbb{C}^n$ be a finite-dimensional Hilbert space. The multicategory $\mathbf{HilbMult}$ restricted to finite-dimensional spaces consists of finite-dimensional Hilbert spaces with bounded multilinear maps, which in this setting are all multilinear maps (as they are automatically bounded).

Let $A \in M_n(\mathbb{C})$ be a Hermitian matrix ($A = A^*$) with spectrum $\sigma(A) \subset \mathbb{R}$. Consider the family $\mathcal{T}_{\mathrm{Hadamard}} = \{T_n\}$ where $T_n: (\mathbb{C}^n)^n \to \mathbb{C}^n$ is the $n$-fold Hadamard (entrywise) product:
\[
T_n(x_1,\dots,x_n) = x_1 \odot \cdots \odot x_n, \quad (x_1 \odot \cdots \odot x_n)_i = (x_1)_i \cdots (x_n)_i.
\]
This family satisfies the conditions of Theorem~\ref{thm:functorial-polynomial-spectral}, yielding the functional calculus multifunctor:
\[
\mathcal{F}_{A,\mathcal{T}_{\mathrm{Hadamard}}}: \mathcal{C}_{\mathrm{poly}}(\sigma(A)) \to \mathbf{HilbMult}.
\]

\begin{itemize}
    \item \textbf{On objects:} $\mathcal{F}_{A,\mathcal{T}_{\mathrm{Hadamard}}}(\ast) = \mathbb{C}^n$
    
    \item \textbf{On multimorphisms:} For $\varphi \in \mathrm{Hom}_{\mathcal{C}_{\mathrm{poly}}(\sigma(A))}(\ast^m; \ast)$ represented by polynomial $P(z_1,\dots,z_m) = \sum c_{k_1,\dots,k_m} z_1^{k_1} \cdots z_m^{k_m}$, the multimorphism is:
    \[
    \mathcal{F}_{A,\mathcal{T}_{\mathrm{Hadamard}}}(\varphi)(x_1,\dots,x_m) = \sum c_{k_1,\dots,k_m} (A^{k_1}x_1) \odot (A^{k_2}x_2) \odot \cdots \odot (A^{k_m}x_m).
    \]
\end{itemize}

\noindent\textbf{Special Cases:}

\begin{enumerate}
    \item \textbf{Linear case ($m=1$):} For univariate $P(z) = \sum c_k z^k$, we recover standard functional calculus:
    \[
    \mathcal{F}_{A,\mathcal{T}_{\mathrm{Hadamard}}}(P)(x) = P(A)x
    \]
    
    \item \textbf{Bilinear case ($m=2$):} For bivariate $P(z,w) = \sum c_{ij} z^i w^j$:
    \[
    \mathcal{F}_{A,\mathcal{T}_{\mathrm{Hadamard}}}(P)(x,y) = \sum c_{ij} (A^i x) \odot (A^j y)
    \]
    
    \item \textbf{Multiplication case ($P(z,w) = zw$):} This gives:
    \[
    \mathcal{F}_{A,\mathcal{T}_{\mathrm{Hadamard}}}(P)(x,y) = (Ax) \odot y
    \]
\end{enumerate}

\noindent\textbf{Verification of Theorem Conditions:}

The Hadamard family $\mathcal{T}_{\mathrm{Hadamard}}$ satisfies:
\begin{itemize}
    \item \textbf{Spectral Locality:} In the standard basis, $\tau_n(e_i, z_1,\dots,z_n) = z_1 \cdots z_n$
    \item \textbf{Polynomial Compatibility:} For polynomials $p_1,\dots,p_m$:
    \begin{align*}
    &T_m(p_1(A)x_1, \dots, p_m(A)x_m) \\
    &= (p_1(A)x_1) \odot \cdots \odot (p_m(A)x_m) \\
    &= (p_1 \cdots p_m)(A) (x_1 \odot \cdots \odot x_m) \quad \text{(since $A$ is diagonal in eigenbasis)} \\
    &= (p_1 \cdots p_m)(A) T_m(x_1,\dots,x_m)
    \end{align*}
    \item \textbf{Uniform Boundedness:} $\|T_n\| \leq 1$ for all $n$
\end{itemize}

\noindent\textbf{Diagonal Basis Representation:}

Since $A$ is Hermitian, $A = UDU^*$ with $D = \mathrm{diag}(\lambda_1,\dots,\lambda_n)$. Let $\tilde{x}_j = U^*x_j$. Then:
\[
\mathcal{F}_{A,\mathcal{T}_{\mathrm{Hadamard}}}(\varphi)(x_1,\dots,x_m) = U\left[ P(\lambda_1,\dots,\lambda_1) \tilde{x}_1 \odot \cdots \odot P(\lambda_n,\dots,\lambda_n) \tilde{x}_m \right]
\]
where the evaluation is diagonal: $P(\lambda_i,\dots,\lambda_i)$ at each coordinate $i$.

\noindent\textbf{Universal Property:}

This example illustrates Proposition~\ref{prop:universal-target}: any interpretation of the matrix operator calculus in another multicategory $\mathcal{M}$ via a compatible family $\{a^{(n)}\}$ factors uniquely through $\mathcal{F}_{A,\mathcal{T}_{\mathrm{Hadamard}}}$.

\end{example}

This finite-dimensional case demonstrates that the abstract framework concretely recovers natural multilinear operations on matrices, with the Hadamard product serving as the canonical $n$-linear operation compatible with the functional calculus.

\begin{example}[Multiplication Operator on $L^2$ Spaces]
\label{ex:multiplication-operator}
Let $(X, \mu)$ be a measure space and consider $H = L^2(X, \mu)$. Let $\mathrm{me}: X \to \mathbb{R}$ be a bounded measurable function, and define the multiplication operator $A: L^2(X, \mu) \to L^2(X, \mu)$ by:
\[
(Af)(x) = \mathrm{me}(x) f(x) \quad \text{for } f \in L^2(X, \mu).
\]
Then $A$ is bounded self-adjoint with spectrum $\sigma(A) = \overline{\mathrm{range}(\mathrm{me})}^{\mathrm{ess}}$, the essential range of $\mathrm{me}$.

Consider the family $\mathcal{T}_{\mathrm{pointwise}} = \{T_n\}$ where $T_n: (L^2(X, \mu))^n \to L^2(X, \mu)$ is the pointwise product:
\[
T_n(g_1,\dots,g_n)(x) = g_1(x) g_2(x) \cdots g_n(x) \quad \text{for } g_j \in L^\infty(X, \mu) \cap L^2(X, \mu).
\]
This family satisfies Theorem~\ref{thm:functorial-polynomial-spectral}, yielding:
\[
\mathcal{F}_{A,\mathcal{T}_{\mathrm{pointwise}}}: \mathcal{C}_{\mathrm{poly}}(\sigma(A)) \to \mathbf{HilbMult}.
\]

\begin{itemize}
    \item \textbf{On objects:} $\mathcal{F}_{A,\mathcal{T}_{\mathrm{pointwise}}}(\ast) = L^2(X, \mu)$
    
    \item \textbf{On multimorphisms:} For $\varphi \in \mathrm{Hom}_{\mathcal{C}_{\mathrm{poly}}(\sigma(A))}(\ast^n; \ast)$ represented by $P(z_1,\dots,z_n)$:
    \[
    \mathcal{F}_{A,\mathcal{T}_{\mathrm{pointwise}}}(\varphi)(g_1,\dots,g_n)(x) = P(\mathrm{me}(x),\dots,\mathrm{me}(x)) \cdot g_1(x) \cdots g_n(x)
    \]
\end{itemize}

\noindent\textbf{Special Cases:}

\begin{enumerate}
    \item \textbf{Linear case ($n=1$):} For $P(z) = \sum c_k z^k$:
    \[
    \mathcal{F}_{A,\mathcal{T}_{\mathrm{pointwise}}}(P)(g)(x) = P(\mathrm{me}(x)) g(x)
    \]
    
    \item \textbf{Bilinear case ($n=2$):} For $P(z,w) = \sum c_{ij} z^i w^j$:
    \[
    \mathcal{F}_{A,\mathcal{T}_{\mathrm{pointwise}}}(P)(g,h)(x) = P(\mathrm{me}(x),\mathrm{me}(x)) g(x) h(x)
    \]
    
    \item \textbf{Constant polynomial ($P \equiv 1$):} Pure pointwise product:
    \[
    \mathcal{F}_{A,\mathcal{T}_{\mathrm{pointwise}}}(1)(g_1,\dots,g_n)(x) = g_1(x) \cdots g_n(x)
    \]
\end{enumerate}

\noindent\textbf{Verification of Theorem Conditions:}

\begin{itemize}
    \item \textbf{Spectral Locality:} Take $\tau_n(x, z_1,\dots,z_n) = z_1 \cdots z_n$, which is measurable
    
    \item \textbf{Polynomial Compatibility:} For polynomials $p_1,\dots,p_n$:
    \begin{align*}
    &T_n(p_1(A)g_1, \dots, p_n(A)g_n)(x) \\
    &= p_1(\mathrm{me}(x))g_1(x) \cdots p_n(\mathrm{me}(x))g_n(x) \\
    &= (p_1 \cdots p_n)(\mathrm{me}(x)) \cdot g_1(x) \cdots g_n(x) \\
    &= (p_1 \cdots p_n)(A) T_n(g_1,\dots,g_n)(x)
    \end{align*}
    
    \item \textbf{Uniform Boundedness:} On $L^\infty \cap L^2$, $\|T_n(g_1,\dots,g_n)\|_2 \leq \|g_1\|_\infty \cdots \|g_{n-1}\|_\infty \|g_n\|_2$
\end{itemize}

\noindent\textbf{Boundedness Analysis:}

The maps $\mathcal{F}_{A,\mathcal{T}_{\mathrm{pointwise}}}(\varphi)$ extend to bounded multilinear maps on all of $(L^2(X, \mu))^n$:

\begin{itemize}
    \item \textbf{Linear case ($n=1$):} 
    \[
    \|\mathcal{F}_{A,\mathcal{T}_{\mathrm{pointwise}}}(P)(g)\|_2^2 = \int_X |P(\mathrm{me}(x))|^2 |g(x)|^2 d\mu(x) \leq \|P\|_{L^\infty(\sigma(A))}^2 \|g\|_2^2
    \]
    since $|P(\mathrm{me}(x))| \leq \sup_{z \in \sigma(A)} |P(z)| = \|P\|_{L^\infty(\sigma(A))}$.
    
    \item \textbf{Multilinear case ($n \geq 2$):} Since $\mathrm{me}$ is bounded, $|P(\mathrm{me}(x),\dots,\mathrm{me}(x))| \leq C_P$ for some constant $C_P > 0$ depending on the polynomial $P$. Then:
    \begin{align*}
    &\|\mathcal{F}_{A,\mathcal{T}_{\mathrm{pointwise}}}(\varphi)(g_1,\dots,g_n)\|_2^2 \\
    &= \int_X |P(\mathrm{me}(x),\dots,\mathrm{me}(x))|^2 |g_1(x)|^2 \cdots |g_n(x)|^2 d\mu(x) \\
    &\leq C_P^2 \int_X |g_1(x)|^2 \cdots |g_n(x)|^2 d\mu(x)
    \end{align*}
    
    To bound the integral, apply Hölder's inequality with exponents $p_1 = p_2 = \cdots = p_n = n$:
    \[
    \int_X |g_1(x)|^2 \cdots |g_n(x)|^2 d\mu(x) \leq \prod_{i=1}^n \left(\int_X |g_i(x)|^{2n} d\mu(x)\right)^{1/n} = \prod_{i=1}^n \|g_i\|_{2n}^2
    \]
    
    Therefore:
    \[
    \|\mathcal{F}_{A,\mathcal{T}_{\mathrm{pointwise}}}(\varphi)(g_1,\dots,g_n)\|_2 \leq C_P \prod_{i=1}^n \|g_i\|_{2n}
    \]
    
    \item \textbf{Relation between norms:} For the $L^{2n}$ norms, we have:
    \begin{itemize}
        \item If $\mu(X) < \infty$ (finite measure space), then by Jensen's inequality or the nesting of $L^p$ spaces:
        \[
        \|g_i\|_{2n} \leq \mu(X)^{\frac{1}{2n} - \frac{1}{2}} \|g_i\|_2
        \]
        \item In general, by interpolation between $L^2$ and $L^\infty$, if $g_i \in L^2 \cap L^\infty$, then:
        \[
        \|g_i\|_{2n} \leq \|g_i\|_2^{1/n} \|g_i\|_\infty^{1 - 1/n}
        \]
        \item For the dense subspace $L^2 \cap L^\infty$, we have uniform boundedness, and the maps extend uniquely to all of $L^2$ by continuity.
    \end{itemize}
    
    \item \textbf{Alternative approach via iterated Hölder:} One can also apply Hölder's inequality iteratively:
    \begin{align*}
    &\int_X |g_1(x)|^2 \cdots |g_n(x)|^2 d\mu(x) \\
    &\leq \|g_1\|_{2n}^2 \left(\int_X (|g_2(x)|^2 \cdots |g_n(x)|^2)^{\frac{n}{n-1}} d\mu(x)\right)^{\frac{n-1}{n}} \\
    &\leq \|g_1\|_{2n}^2 \|g_2\|_{2n}^2 \left(\int_X (|g_3(x)|^2 \cdots |g_n(x)|^2)^{\frac{n}{n-2}} d\mu(x)\right)^{\frac{n-2}{n}} \\
    &\leq \cdots \leq \prod_{i=1}^n \|g_i\|_{2n}^2
    \end{align*}
    This shows the bound is sharp and the multilinear map is indeed bounded from $(L^{2n}(X, \mu))^n$ to $L^2(X, \mu)$.
\end{itemize}

\noindent\textbf{Functoriality Realization:}

The composition in $\mathcal{C}_{\mathrm{poly}}(\sigma(A))$ from Definition~\ref{def:poly multicat} is preserved:
\begin{align*}
&\mathcal{F}_{A,\mathcal{T}_{\mathrm{pointwise}}}(\varphi \circ (\psi_1,\dots,\psi_m))(g_1,\dots,g_N)(x) \\
&= [P \circ (Q_1,\dots,Q_m)](\mathrm{me}(x),\dots,\mathrm{me}(x)) \cdot g_1(x) \cdots g_N(x) \\
&= P(Q_1(\mathrm{me}(x),\dots,\mathrm{me}(x)), \dots, Q_m(\mathrm{me}(x),\dots,\mathrm{me}(x))) \cdot g_1(x) \cdots g_N(x) \\
&= \mathcal{F}_{A,\mathcal{T}_{\mathrm{pointwise}}}(\varphi) \circ (\mathcal{F}_{A,\mathcal{T}_{\mathrm{pointwise}}}(\psi_1),\dots,\mathcal{F}_{A,\mathcal{T}_{\mathrm{pointwise}}}(\psi_m))(g_1,\dots,g_N)(x)
\end{align*}

\noindent\textbf{Concrete Example:}

Let $X = [0,1]$ with Lebesgue measure, $\mathrm{me}(x) = x$, so $\sigma(A) = [0,1]$. For $P(z,w) = z^2 + zw + w^2$:
\[
\mathcal{F}_{A,\mathcal{T}_{\mathrm{pointwise}}}(P)(g,h)(x) = (\mathrm{me}(x)^2 + \mathrm{me}(x)\mathrm{me}(x) + \mathrm{me}(x)^2) g(x) h(x) = 3x^2 g(x) h(x)
\]

\noindent\textbf{Universal Property Interpretation:}

This is the canonical example where Theorem~\ref{thm:functorial-polynomial-spectral} recovers the expected pointwise operations. The universal property ensures any interpretation of this multiplication operator calculus in another multicategory $\mathcal{M}$ factors uniquely through $\mathcal{F}_{A,\mathcal{T}_{\mathrm{pointwise}}}$.

\end{example}

This example demonstrates that the abstract framework naturally captures the pointwise functional calculus for multiplication operators, with the polynomial multicategory $\mathcal{C}_{\mathrm{poly}}(\sigma(A))$ encoding the algebraic structure of pointwise polynomial operations.

Before present the next example, we have to provide the following definition.

\begin{definition}[Graph-Norm Boundedness for Multicategorical Functional Calculus]
\label{def:graph-norm-boundedness}
Let $A$ be an unbounded self-adjoint operator on a Hilbert space $H$. For each $k \geq 0$, define the \emph{graph norm} $\|\cdot\|_k$ on $\mathcal{D}(A^k)$ by:
\[
\|x\|_k^2 = \sum_{j=0}^k \|A^j x\|^2.
\]
For $k < 0$, define $\|x\|_k = \|x\|_H$.

A multifunctor $\eta: \mathcal{C}_{\mathrm{poly}}(\sigma(A)) \to \mathcal{M}$ satisfies \emph{graph-norm boundedness} if for every polynomial $P \in \mathrm{Hom}_{\mathcal{C}_{\mathrm{poly}}(\sigma(A))}(\ast^n; \ast)$ of degree $d$, the corresponding morphism
\[
\eta(P): X_{k_1} \times \cdots \times X_{k_n} \to X_{m-d}, \quad \text{where } m = \min(k_1,\dots,k_n),
\]
is bounded with respect to the graph norms. That is, there exists a constant $C_P > 0$ such that for all $x_i \in X_{k_i}$:
\[
\|\eta(P)(x_1,\dots,x_n)\|_{m-d} \leq C_P \prod_{i=1}^n \|x_i\|_{k_i}.
\]
\end{definition}

Before presenting next example, we have to define Boreal multicategory in Definition~\ref{def:borel-multicategory}.

\begin{definition}[Borel Functional Calculus Multicategory]
\label{def:borel-multicategory}
Let $\Sigma \subset \mathbb{C}$ be a compact set. The \emph{Borel functional calculus multicategory} $\mathcal{C}_{\mathrm{borel}}(\Sigma)$ is defined as follows:

\begin{itemize}
    \item \textbf{Objects}: The single object $\ast$.
    
    \item \textbf{Morphisms}: For each $n \geq 0$, the set of $n$-ary multimorphisms is
    \[
    \mathrm{Hom}_{\mathcal{C}_{\mathrm{borel}}(\Sigma)}(\ast^n; \ast) := \{P|_{\Sigma^n} : P \in \mathbb{C}[z_1,\dots,z_n]\}
    \]
    where $\mathbb{C}[z_1,\dots,z_n]$ denotes the space of complex polynomials in $n$ variables, and $P|_{\Sigma^n}$ denotes the restriction to $\Sigma^n$.
    
    \item \textbf{Composition}: Given $P \in \mathrm{Hom}(\ast^m; \ast)$ and $Q_j \in \mathrm{Hom}(\ast^{n_j}; \ast)$ for $j=1,\dots,m$, their composition is defined by polynomial composition:
    \[
    P \circ (Q_1,\dots,Q_m)(z_{1,1},\dots,z_{m,n_m}) := P(Q_1(z_{1,1},\dots,z_{1,n_1}), \dots, Q_m(z_{m,1},\dots,z_{m,n_m}))
    \]
    restricted to $\Sigma^{n_1+\cdots+n_m}$.
    
    \item \textbf{Identities}: The identity morphism $\mathrm{id}_\ast \in \mathrm{Hom}(\ast; \ast)$ is given by the polynomial $z \mapsto z$.
\end{itemize}

This construction extends to the \emph{Borel completion} $\overline{\mathcal{C}}_{\mathrm{borel}}(\Sigma)$ by allowing bounded Borel functions:
\[
\mathrm{Hom}_{\overline{\mathcal{C}}_{\mathrm{borel}}(\Sigma)}(\ast^n; \ast) := \{f: \Sigma^n \to \mathbb{C} \mid f \text{ is bounded and Borel measurable}\}
\]
with composition defined pointwise: for $f \in \mathrm{Hom}(\ast^m; \ast)$ and $g_j \in \mathrm{Hom}(\ast^{n_j}; \ast)$,
\[
(f \circ (g_1,\dots,g_m))(\mathbf{z}_1,\dots,\mathbf{z}_m) := f(g_1(\mathbf{z}_1),\dots,g_m(\mathbf{z}_m)).
\]
\end{definition}

\begin{remark}
The polynomial subcategory $\mathcal{C}_{\mathrm{borel}}(\Sigma)$ is dense in $\overline{\mathcal{C}}_{\mathrm{borel}}(\Sigma)$ in the topology of pointwise convergence bounded by the sup-norm. The multifunctor
\[
\mathcal{F}_{A,\mathcal{T}}: \overline{\mathcal{C}}_{\mathrm{borel}}(\sigma(A)) \to \mathbf{HilbMult}
\]
is first defined on polynomials and then extended to all bounded Borel functions via limits, respecting the graph-norm boundedness conditions.
\end{remark}

\begin{remark}
For the functional calculus application, we typically take $\Sigma = \sigma(A)$, the spectrum of the operator $A$. The single object $\ast$ represents the abstract notion of the algebra generated by $A$, while the multimorphisms represent the operations that can be built from $A$ using the functional calculus.
\end{remark}

\begin{example}[Unbounded Self-Adjoint Operators --- Graph-Norm Formulation]
\label{ex:graph-norm-operator}

Let $H$ be a Hilbert space and $A:\mathcal{D}(A)\subset H\to H$ be an unbounded self-adjoint operator. Consider the graded family of graph-norm spaces $\{X_k\}_{k\in\mathbb{Z}}$ where:
\begin{itemize}
    \item For $k \geq 0$: $X_k = \mathcal{D}(A^k)$ with graph norm $\|x\|_k^2 = \sum_{j=0}^k \|A^j x\|^2$
    \item For $k < 0$: $X_k = H$ with $\|x\|_k = \|x\|_H$
\end{itemize}

\medskip\noindent\textbf{Graph-Norm Boundedness of the Polynomial Calculus}

The polynomial functional calculus naturally defines a multifunctor $\eta: \mathcal{C}_{\mathrm{poly}}(\sigma(A)) \to \mathcal{M}$ where for a polynomial $P$ of degree $d$, the induced map is:
\[
\eta(P): X_{k_1} \times \cdots \times X_{k_n} \to X_{m-d}, \quad m = \min(k_1,\dots,k_n)
\]
This multifunctor satisfies graph-norm boundedness (Definition \ref{def:graph-norm-boundedness}) because for $x_i \in X_{k_i}$, we have the estimate:
\[
\|\eta(P)(x_1,\dots,x_n)\|_{m-d} \leq C_P \prod_{i=1}^n \|x_i\|_{k_i}
\]
where the constant $C_P$ depends on the polynomial $P$ and the operator $A$.

\medskip\noindent\textbf{Relation to Bounded Transform}

The bounded transform $T_A = A(I+A^2)^{-1/2} \in \mathcal{B}(H)$ provides an alternative perspective. The homeomorphism $\varphi: \mathbb{R} \to (-1,1)$, $\varphi(\lambda) = \lambda/\sqrt{1+\lambda^2}$ with inverse $\psi(t) = t/\sqrt{1-t^2}$ intertwines the calculi via:
\[
g(T_A) = (g\circ\varphi)(A), \quad g\in C_b((-1,1))
\]

While this gives an isomorphism $C_b(\sigma(A)) \cong C_b(\sigma(T_A))$, it does not preserve polynomial structure: if $P$ is a polynomial in $\lambda$, then $P\circ\psi$ is generally not a polynomial in $t$.

\medskip\noindent\textbf{Domain-Graded Multifunctor Construction}

To construct a multifunctor that respects graph-norm boundedness:

\begin{enumerate}
    \item For multilinear maps $T_n$ defined by polynomial expressions in $A$, the graph-norm boundedness condition ensures they extend to bounded multilinear maps between the appropriate graph-norm spaces.
    
    \item The conjugation trick can be made rigorous in the graph-norm framework: for $x_i \in X_{k_i}$, define
    \[
    \widetilde T_n(x_1,\dots,x_n) := (I+A^2)^{-1/2} T_n((I+A^2)^{1/2}x_1,\dots,(I+A^2)^{1/2}x_n)
    \]
    which yields bounded maps between graph-norm spaces after appropriate domain adjustments.
\end{enumerate}

The resulting multifunctor $\mathcal{F}_{A,\mathcal{T}}: \mathcal{C}_{\mathrm{borel}}(\sigma(A)) \to \mathbf{HilbMult}$ satisfies that for bounded Borel $f$ on $\sigma(A)$:
\[
\mathcal{F}_{A,\mathcal{T}}(f) = \mathcal{F}_{T_A,\mathcal{T}'}(f\circ\psi) = (f\circ\psi)(T_A)
\]
and respects the graph-norm grading when restricted to polynomial morphisms.

\medskip\noindent\textbf{Quantum Harmonic Oscillator}

For $H = L^2(\mathbb{R})$ and $A = -\tfrac{d^2}{dx^2} + x^2$ with eigenbasis $\{\psi_m\}$ and eigenvalues $\lambda_m = 2m+1$, the graph-norm spaces are:
\[
X_k = \left\{ \sum_{m\geq 0} c_m \psi_m : \sum_{m\geq 0} |c_m|^2 (1+\lambda_m^2)^k < \infty \right\}
\]
For diagonal multilinear operators (acting by scalar coefficients on the eigenbasis), the multifunctorial action:
\[
f(A)h = \sum_{m\geq 0} f(\lambda_m) \langle h, \psi_m \rangle \psi_m
\]
naturally respects the graph-norm boundedness condition and provides explicit control over the mapping properties between different graph-norm spaces.
\end{example}

\begin{example}[Tensorized System of Commuting Self-Adjoint Operators]
\label{ex:tensorized-system}

Let \(H = H_1 \otimes H_2\) with \(H_1 = H_2 = L^2(\mathbb{R})\).  
Define
\[
A = -\tfrac{d^2}{dx^2} + x^2 \quad \text{on } H_1,
\qquad
B = -\tfrac{d^2}{dy^2} + y^2 \quad \text{on } H_2.
\]
Both \(A\) and \(B\) are self-adjoint with discrete spectra
\(\sigma(A) = \{2n+1 : n \ge 0\}\),
\(\sigma(B) = \{2m+1 : m \ge 0\}\).
In the tensor product \(H = H_1 \otimes H_2\), the operators
\(A \otimes I\) and \(I \otimes B\) are commuting, self-adjoint, and bounded below.

\medskip
\noindent\textbf{Partial contraction.}
Define a bilinear pairing that contracts the second tensor factor:
for simple tensors \(u \otimes \phi, v \otimes \psi \in H_1 \otimes H_2\),
\[
(u \otimes \phi) \bullet_{H_2} (v \otimes \psi)
:= \langle \phi, \psi \rangle_{H_2}\, u \in H_1,
\]
and extend by linearity and continuity to a bounded bilinear map
\[
\bullet_{H_2} : (H_1 \otimes H_2) \times (H_1 \otimes H_2) \to H_1.
\]
This operation “contracts over” \(H_2\) while preserving the \(H_1\)-component.

\medskip
\noindent\textbf{Definition of the family.}
For \(n \ge 1\), define a family of multilinear maps
\[
T_n : H^n \longrightarrow H_1
\]
recursively using \(\bullet_{H_2}\).  
For \(n = 2\),
\[
T_2(f,g)
:= (A \otimes I)f \bullet_{H_2} (I \otimes B)g.
\]
For general \(n\), let \(P(X_1,Y_1,\dots,X_n,Y_n)
= \sum_{\mathbf{k},\mathbf{l}} c_{\mathbf{k},\mathbf{l}}
  X_1^{k_1}Y_1^{l_1}\cdots X_n^{k_n}Y_n^{l_n}\)
be a polynomial in commuting variables.  
Then on a dense invariant domain \(\mathcal{D} \subset H\) (for instance, the finite span of Hermite tensors), define
\[
T_n(f_1,\dots,f_n)
:= \sum_{\mathbf{k},\mathbf{l}} c_{\mathbf{k},\mathbf{l}}
  \Bigl[
    (A^{k_1}\otimes B^{l_1})f_1
    \bullet_{H_2}
    \cdots
    \bullet_{H_2}
    (A^{k_n}\otimes B^{l_n})f_n
  \Bigr],
\]
where the iterated contraction is applied left-to-right.  
This defines a densely defined multilinear operator with values in \(H_1\).

\medskip
\noindent\textbf{Domain and boundedness.}
Since \(A,B\) are unbounded, powers \(A^k, B^l\) act on domains \(D(A^k), D(B^l)\).  
To ensure rigor, one may adopt one of the following standard interpretations:
\begin{enumerate}
    \item \emph{Domain approach:}  
    Work on a dense invariant domain \(\mathcal{D} = \mathrm{span}\{h_m \otimes h_n\}\) of Hermite tensor eigenfunctions.  
    Each \(T_n\) is well-defined and continuous when each input is equipped with the graph norm of the corresponding operator powers.

    \item \emph{Bounded transform approach:}  
    Replace \(A, B\) by bounded functions such as \((I + A)^{-1}\), \((I + B)^{-1}\) or by their Cayley transforms.  
    Then each \(T_n\) extends to a globally bounded multilinear operator on \(H\), compatible with the Banach-enriched multicategory \(\mathbf{HilbMult}\).
\end{enumerate}

\medskip
\noindent\textbf{Spectral and polynomial properties.}
On either version (domain or bounded transform), the family
\(\mathcal{T} = \{T_n\}\) satisfies:
\begin{itemize}
    \item \emph{Spectral locality:}
    The joint spectrum of \(A \otimes I\) and \(I \otimes B\) is
    \(\sigma(A) \times \sigma(B)\); thus any polynomial action
    has spectrum contained in this product set.

    \item \emph{Polynomial compatibility:}
    For polynomials \(p_j\),
    \[
    T_n\bigl(p_1(A \otimes I, I \otimes B)f_1, \dots, p_n(A \otimes I, I \otimes B)f_n\bigr)
    = (p_1 \cdots p_n)(A \otimes I, I \otimes B)\,
      T_n(f_1, \dots, f_n).
    \]

    \item \emph{Uniform boundedness:}
    In the bounded-transform setting, all \(T_n\) are bounded on \(H\);
    in the domain setting, they are continuous in the induced graph-topology.
\end{itemize}

\medskip
\noindent\textbf{Concrete instance.}
For the polynomial \(P(X_1,Y_1,X_2,Y_2) = X_1 Y_2\),
\[
\mathcal{F}_{(A,B),\mathcal{T}}(P)(f,g)
= T_2(f,g)
= (A \otimes I)f \bullet_{H_2} (I \otimes B)g \in H_1,
\]
which corresponds to applying \(A\) to the first component of \(f\),
\(B\) to the second component of \(g\), and contracting over \(H_2\).  
For the constant polynomial \(Q \equiv 1\),
\[
\mathcal{F}_{(A,B),\mathcal{T}}(Q)(f,g)
= f \bullet_{H_2} g,
\]
the contraction reduces to the pure tensor pairing.

\medskip
This example illustrates that the family \(\mathcal{T} = \{T_n\}\)
realizes a coherent system of multilinear operator functionals over
commuting self-adjoint operators in a tensor product Hilbert space,
and can be interpreted as a Banach-enriched multicategorical model
of tensorized operator calculus.
\end{example}

\section{Programmatic Vision: The Categorical Spectral Architecture}

The $\mathbf{HilbMult}$ multiclass construction presented here and the associated spectral-functional theory represent the basic step of a long-term and integrated research program. This work introduces \textbf{Categorical Spectral Geometry (CSA)} - A comprehensive framework designed to synthesize operator theory higher-order categorical structures and non-commutative geometry into a unified formalism for complex systems.

CSA is not seen as a rigid plan but rather as a dynamic and evolving research agenda.  Its development continues through the development of interrelated conceptual layers each of that is designed to address a fundamental aspect of the universal theory of operators. The ultimate goal is to create a common definitive language for the syntax semantics and geometry of operators thereby enabling a deep synthesis of logic physics and computation.

\subsection{The Conceptual Axes of Development}

The long-term trajectory of the CSA is organized along five interdependent conceptual axes. These axes represent the core pillars upon which the architecture will be erected.

\begin{enumerate}
    \item \textbf{The Syntactic Axis: Operadic Coherence as a Universal Language.}
    The semantic universe of $\mathbf{HilbMult}$ demands a rigorous syntactic counterpart. This axis focuses on the construction of a \emph{Synergy Operad}, which will serve as a universal grammatical framework for composing operator networks. A central milestone will be the establishment of a \emph{Coherence Theorem}, guaranteeing that diagrammatically equivalent compositions yield identical morphisms, thereby translating intuitive graphical reasoning into formally verifiable categorical statements~\cite{Chang2025CompositionCoherence}.

    \item \textbf{The Algebraic Axis: The Categorical Algebra of Operators.}
    This axis seeks to ground classical operator algebra within the multicategorical paradigm. The goal is to develop a theory of \emph{C\(^*$-algebraic operability} within structures like $\mathbf{HilbMult}$, formalizing how fundamental operations—such as tensor products, commutators, and adjoints—interact functorially across categorical layers. The outcome will be a deep \emph{Categorical Algebra of Operators}, revealing coherence laws as the foundation for algebraic identities.

    \item \textbf{The Structural Axis: Higher Dualities and Physical Constraints.}
    Here, we ascend to the study of intrinsic symmetries and limitations. By developing a comprehensive theory of \emph{$n$-adjoints} in multicategories, this axis aims to recast fundamental physical principles—such as unitarity, causality, and the no-cloning theorem—not as external postulates, but as emergent properties expressed via \emph{operadic ideals} or structural identities. This promises a paradigm where the architecture of physical laws is derived from categorical geometry.

    \item \textbf{The Analytical Axis: A Functorial Calculus for Spectral Dynamics.}
    This axis bridges our categorical framework back to the core concerns of functional analysis. The mission is to construct a \emph{Functorial Calculus of Operators} that seamlessly integrates analytic notions of spectrum, norm, and approximation. This involves developing tools like a categorified \emph{Goodwillie calculus} to obtain quantitative spectral bounds and stability criteria for complex networks, thereby creating a powerful analytical toolkit native to the categorical setting.

    \item \textbf{The Geometric Axis: Conquering Noncommutativity via Spectral Stacks.}
    The ultimate objective of the CSA is to geometrize noncommutativity. This axis posits that the spectrum of a noncommuting family of operators is inherently a \emph{Spectral Stack}—a rich geometric object encoding its algebraic relations. The grand challenge is to develop the \emph{Analytic–Geometric Toolkit} required to construct and analyze these stacks, culminating in a \emph{Categorified Spectral Duality} principle. This principle will reveal the classical spectral theorem as a degenerate case of a far more general, geometrically profound reality.
\end{enumerate}

\subsection{A Living Research Ecosystem}

The \textbf{Categorical Spectral Architecture} is an open and adaptive research ecosystem. The five axes outlined above provide the central compass for its evolution, but their interactions are expected to generate unforeseen directions and hybrid subfields. Each subsequent contribution within this program will illuminate a distinct facet of the architecture, while simultaneously reinforcing the coherence and power of the whole.

This paper lays the cornerstone. The edifice to be built upon it promises nothing less than a unified categorical foundation for modern operator theory, with transformative potential for mathematics, quantum physics, and information science. We envision the CSA not merely as a technical framework, but as a new language for understanding the architecture of complexity itself.

\bibliographystyle{IEEETran}
\bibliography{HilbMult_Banach_Enriched_Multicategory_for_Operator_Algebras_Bib}

@unpublished{Chang2025CompositionCoherence,
  author       = {Shih-Yu Chang},
  title        = {Composition and Coherence: The Syntax of Operator Networks},
  note         = {Manuscript submitted to arXiv},
  year         = {2025},
}

@book{leinster2004higher,
  title={Higher operads, higher categories},
  author={Leinster, Tom},
  number={298},
  year={2004},
  publisher={Cambridge University Press}
}

@misc{chatgpt2025,
  author       = {OpenAI},
  title        = {ChatGPT (GPT-5) Model},
  year         = {2025},
  howpublished = {\url{https://chat.openai.com/}},
  note         = {Large language model used for idea verification and reference assistance.}
}

@misc{deepseek2025,
  author       = {DeepSeek-AI},
  title        = {DeepSeek Large Language Model},
  year         = {2025},
  howpublished = {\url{https://www.deepseek.com/}},
  note         = {AI assistant used for cross-verification of concepts and data.}
}

@book{awodey2010category,
  title={Category theory},
  author={Awodey, Steve},
  volume={52},
  year={2010},
  publisher={OUP Oxford}
}

@book{abramovich2002invitation,
  title={An invitation to operator theory},
  author={Abramovich, Yuri A and Aliprantis, Charalambos D},
  volume={50},
  year={2002},
  publisher={American Mathematical Society Providence, RI}
}

\end{document}